\numberwithin{equation}{section}
\newtheorem{theorem}{Theorem}[section]
\newtheorem{lemma}[theorem]{Lemma}
\newtheorem{proposition}[theorem]{Proposition}
\newtheorem{prop}[theorem]{Proposition}
\newtheorem{corollary}[theorem]{Corollary}
\newtheorem{remark}[theorem]{Remark}
\newtheorem{rem}[theorem]{Remark}
\newcommand{\RR}{\mathbb{R}}
\newcommand{\A}{\mathbb{A}}
\newcommand{\B}{\mathbb{B}}
\newcommand{\R}{\mathbb{R}}
\newcommand{\cE}{\mathcal{E}}
\newcommand{\cG}{\mathcal{G}}
\newcommand{\cJ}{\mathcal{J}}
\newcommand{\cK}{\mathcal{K}}
\newcommand{\cL}{\mathcal{L}}
\newcommand{\cU}{\mathcal{U}}
\newcommand{\cO}{\mathcal{O}}
\newcommand{\cS}{\mathcal{S}}
\newcommand{\half}{\tfrac{1}{2}}
\newcommand{\iprod}[1]{\langle{#1}\rangle}
\newcommand{\dt}{{\frac{d}{dt}}}
\newcommand{\txi}{{\tilde \xi}}
\newcommand{\tx}{{\tilde x}}
\def\myskip{\bigskip\noindent}
\newcommand{\Div}{\mbox{div}}
\newcommand{\I}{\Im m}
\newcommand{\real}{\Re e}
\title{Multi-dimensional stability of planar Lax shocks in hyperbolic-elliptic coupled systems}
\author{Toan Nguyen{\footnote{Division of Applied Mathematics, Brown University, 182 George street, Providence, RI 02912, USA. Email: Toan\underline{~}Nguyen@Brown.edu}}
}
\begin{document}

\begin{abstract}
We study nonlinear time-asymptotic stability of small--amplitude planar Lax shocks in a model consisting of a system of multi--dimensional conservation laws coupled with an elliptic system. Such a model can be found in context of dynamics of a gas in presence of radiation. Our main result asserts that the standard uniform Evans stability condition implies nonlinear stability. The main analysis is based on the earlier developments by Zumbrun for multi-dimensional viscous shock waves and by Lattanzio-Mascia-Nguyen-Plaza-Zumbrun for one--dimensional radiative shock profiles.
\end{abstract}

%\date\today
\maketitle

\section{Introduction}\label{sec:intro} 

In the present paper, we consider the following general hyperbolic--elliptic coupled system,
\begin{equation}\label{modelsyst}
	\left\{\begin{array}{rc}
		u_t + \sum_{j=1}^df_j(u)_{x_j} +L\mbox{div}~q =0,\\
		-\nabla~\mbox{div}~q + q+\nabla g(u)=0,	\end{array}\right.
\end{equation}
consisting of a system of conservation laws coupled with or regularized by an elliptic system, with imposed initial data $u(x,0) = u_0(x)$. Here, $x\in \RR^d$, $L$ is a constant vector in $\RR^n$, the unknowns $u\in  \RR^n$ and $q\in \RR^d$, for $n\ge 1$, $d\ge 2$, the nonlinear vector-valued flux $f_j(u)\in \RR^n$, and the scalar function $g(u)\in \RR$. 

The study of \eqref{modelsyst} is motivated by a physical model or a so-called radiating gas model that describes dynamics of a gas in presence of radiation. Such a model (due to high-temperature effects) consists of the compressible Euler equations coupled with an elliptic system representing the radiative flux. See, for example, \cite{Hm,VK}, for its derivations and discussions further on physical applications. 

The system \eqref{modelsyst} in its spatially one-dimensional form has been extensively studied by many authors such as Liu, Schochet, and Tadmor \cite{ST,LiuTa1}, Kawashima and Nishibata \cite{KN1,KN2,KN3}, Serre \cite{Ser0,Ser1}, Ito \cite{Ito}, Lin, Coulombel, and Goudon \cite{LCG1,LCG2}, among others. 
In \cite{LMS1}, Lattanzio, Mascia, and Serre show the existence and regularity of (planar) shock profiles (whose precise definition will be recalled shortly below) in a general setting as in \eqref{modelsyst}, and recently in a collaboration with Lattanzio, Mascia, Plaza, and Zumbrun \cite{LMNPZ,NPZ}, we show that such radiative shocks with small amplitudes are nonlinearly asymptotically orbitally stable. Regarding asymptotic stability, all of aforementioned references deal with spatially one--dimensional perturbations. In this work, we are interested in {\em asymptotic stability of such a shock profile with respect to multi--dimensional perturbations.} Regarding asymptotic behaviors of solutions to the model system \eqref{modelsyst} in the multi-dimensional spaces, we mention recent related works by Wang and Wang \cite{WW} and by Liu and Kawashima \cite{LK}. There, however, the authors study stability of constant states (or the zero state) and the model system \eqref{modelsyst} that they consider is restricted to the case when $u$ are scalar functions. In this paper, we study stability of planar shocks and allow $u$ to be vector--valued functions.

\subsection{Shock profiles}\label{shockprofile} To state precisely the objective of our study, let us consider the one-dimensional system of conservation laws:
\begin{equation}\label{1d-cons-laws}u_t + f_1(u)_{x_1}=0,\end{equation} for vector function $u\in \RR^n$. 
We assume that the system is strictly hyperbolic, that is, the the Jacobian matrix $df_1(u)$ has $n$ distinct real eigenvalues $\lambda_j(u)$, $j=1,\cdots, n,$ with $\lambda_1(u)<\cdots <\lambda_n(u)$, for all $u$. It is easy to see that such a conservation laws \eqref{1d-cons-laws} admits weak solutions of the form $u = \bar u(x-st)$ with 
$$ \bar u (x)  = \left \{ \begin{array}{lrr} u_+ ,\qquad & x> x_0, \\
u_- ,\qquad & x< x_0, \end{array} \right. $$
for $u_\pm \in \RR^n$, $s\in \RR$, and $x_0 \in \RR$, assuming that the triple $(u_\pm,s)$ satisfies the Rankine-Hugoniot jump condition:
\begin{equation}
\label{RH}
f_1(u_+) - f_1(u_-) = s(u_+ - u_-).
\end{equation} 
Here, by translation invariant, we take $x_0 =0$. The triple $(u_\pm,s)$ is then called a {\bf hyperbolic shock} solution of the system \eqref{1d-cons-laws}. It is called a {\bf hyperbolic $p$-Lax shock} solution of \eqref{1d-cons-laws} if the triple further satisfies the classical $p$-Lax entropy conditions:
\begin{equation}\label{Laxcond}
\begin{aligned}
\lambda_p(u_+) < \; &s < \lambda_{p+1}(u_+), \\
\lambda_{p-1}(u_-) <\; &s < \lambda_p (u_-),
\end{aligned}
\end{equation}
 for some $p$ such that $1\le p\le n$.

Next, let us consider the one-dimensional hyperbolic-elliptic system, that is the system \eqref{1d-cons-laws} coupled with an elliptic equation:
\begin{equation}\label{1d-modelsyst}
\begin{aligned}
\left\{\begin{array}{lrr}
u_{t}+ f_1(u)_{x_1} + L q^1_{x_1} &= &0,\\
-q^1_{x_1x_1} + q^1 + g(u)_{x_1} &=&0,
\end{array}
\right.
\end{aligned}
\end{equation}
for vector functions $u\in \RR^n$ and scalar $q^1\in \RR$. Lattanzio, Mascia, and Serre have shown (\cite{LMS1}) that there exist traveling wave solutions of \eqref{1d-modelsyst} that associate with (or regularize) the hyperbolic $p$-Lax shock. To recall their result more precisely, let us denote $L_p(u)$ and $R_p(u)$ the eigenvectors of $df_1(u)$ associated to the eigenvalue $\lambda_p(u)$. Assume also that the $p^{th}$ characteristic field is genuinely nonlinear, that is,   
\begin{equation}\label{GNL} (\nabla \lambda_p)^\top \cdot R_p \neq 0, \end{equation}
and furthermore at the end states $u_\pm$, there holds the positive diffusion condition 
\begin{equation}\label{Diff-cond}
L_p(u_\pm) (L dg(u_\pm)) R_p(u_\pm) > 0.\end{equation}
Here, $dg(u_\pm)$ is the Jacobian row vector in $\RR^n$, consisting the partial derivatives in $u_j$ of $g(u)$. The condition \eqref{Diff-cond} indeed comes naturally from the Chapman-Enskog expansion, giving a right sign of the diffusion term; see, for example, \cite{ST} or \cite{LMS1}.

\bigskip

We recall the result in \cite{LMS1}: 
 
{\it Given a hyperbolic p-Lax shock $(u_\pm,s)$ of \eqref{1d-cons-laws} and the assumptions \eqref{GNL} and \eqref{Diff-cond}, there exists a traveling wave solution $(u,q^1)$ of \eqref{1d-modelsyst} with the same speed $s$ and with asymptotic constant states $(u_\pm,0)$: 
\begin{equation}\label{profile}
	(u,q^1)(x_1,t) = (U,Q^1)(x_1-st),\qquad
	(U,Q^1)(\pm\infty) = (u_\pm, 0),
\end{equation} 
Furthermore, when the shock has a sufficiently small amplitude: $|u_+ - u_- |\ll 1$, the traveling wave solution is unique (up to a translation shift) and regular (see Theorems 1.6 and 1.7 of \cite{LMS1} for precise and much more general statements). 
}

\bigskip

We call such a traveling wave \eqref{profile} {\bf a radiative $p$-Lax shock} profile. Let $Q = (Q^1,0)\in \RR^d$. It is clear that $(U,Q)$ is a particular solution to the multi-dimensional hyperbolic-elliptic system \eqref{modelsyst}, with $(U,Q^1)$ as in \eqref{profile}.  We then call the solution $(U,Q)$ {\bf the planar radiative $p$-Lax shock} of \eqref{modelsyst}. Without loss of generality (that is, by re-defining $f_1$ by $f_1 - su$), in what follows we assume that {\em the shock speed $s$ is zero.}

\bigskip

{\em In this paper, we study nonlinear time-asymptotic stability of such a planar radiative $p$-Lax shock $(U,Q)$ with sufficiently small amplitudes: $|u_+-u_-|\ll 1$}. We shall make several technical and structural assumptions. Our first set of assumptions, as a summary of the above assumptions, reads as follows:

\bigskip

{\bf (S1)} The system \eqref{1d-cons-laws} is strictly hyperbolic, and the triple $(u_\pm,0)$ is a hyperbolic $p$-Lax shock of \eqref{1d-cons-laws}. 

\bigskip

{\bf (S2)} The system \eqref{1d-modelsyst} satisfies the genuine nonlinearity and the positive diffusion conditions \eqref{GNL} and \eqref{Diff-cond}

\bigskip

By hyperbolicity, it is straightforward to see that as long as the shock profile $(U,Q)$ is smooth, it enjoys the exponential convergence to their end states, precisely,
\begin{equation}\label{layerdecay}
	\Big|(d/dx_1)^k(U-u_\pm,Q)\Big|\le C e^{-\eta |x_1|},
\end{equation}
as $|x_1|\to +\infty$, for some $\eta>0$, $k\ge 0$. See, for example, a simple proof in \cite{LMNPZ}, Lemma 2.1.

In addition, we remark that the condition \eqref{Laxcond} implies that $\lambda_p(U(x_1))$ must vanish at some point $x^0_1\in \RR$ along the shock profile. By translating $x_1$ to $x_1+ x^0_1$, we assume that it vanishes at $x_1=0$. We call such a point {\bf singular} simply because the associated ODE system obtained from the standard resolvent equations is singular at this point. For further discussions on this point, see the paragraph nearby equation \eqref{eq:firsto}. Throughout the paper, we assume that 

\bigskip

{\bf (S3)} $x_1=0$ is the unique singular point such that $\lambda_p(U(0)) =0$. Furthermore, at this point, we assume
\begin{equation}\label{non-deg}
\frac{d}{dx_1}\lambda_p(U(x_1))_{\vert_{x_1=0}} \not =0.
\end{equation}

\bigskip

The uniqueness assumption is purely for sake of simplicity. The case of finite numbers of singular points should follow similarly from our analysis.

\subsection{Structural assumptions} We shall make our second set of assumptions on structure of the system \eqref{modelsyst}. Let us recall that $df_j$ and $dg$ denote the Jacobians of the nonlinear flux functions $f_j$ and $g$, respectively. Let $\cU$ be some neighborhood in $\RR^n$ of the shock profile $U$, constructed in the previous subsection. Our next assumption concerns the symmetrizability of the system.

\bigskip

{\bf (A1)} There exists a symmetric, positive definite $A_0 = A_0(u)$ such that
$A_0 (u)df_j(u) $ is symmetric and $A_0(u)Ldg(u)$ is %symmetric,
positive semi-definite, %of rank one, 
for all $u \in  \cU$.
\bigskip

One may notice that (A1) is a common assumption in the stability theory of conservation laws, which may go back to the original idea of Godunov and Friedrichs (see, e.g., \cite{Fri}). Essentially, by the standard symmetrizer $L^2$ or $H^s$ energy estimates, the assumption (A1) yields  the necessary local well-posedness, and is closely related to existence of an associated convex entropy of the hyperbolic system. 

\medskip

We next impose the well-known Kawashima and Shizuta (KS) condition, which has played a very crucial role in studies of time-asymptotic stability. The assumption reads  

\bigskip

{\bf (A2)} For each $\xi \in \RR^d\setminus \{0\}$, no eigenvector of
$\sum_j\xi_j df_j (u_\pm)$ lies in the
kernel of $|\xi|^2Ldg(u_\pm)$. 

\bigskip

Our use of the (KS) condition is to derive sufficient $H^s$, for large $s$, energy estimates, and therefore provide sufficient control of ``high-frequency'' part of the solution operator. Here and in what follows, by high- or low-frequency regions, we always mean the regions at the level of resolvent solutions that $|(\lambda,\tilde \xi)|$ is large or small, with $(\lambda,\tilde \xi)$ being the Laplace and Fourier transformed variables of time $t$ and the spatial variable $\tilde x$ transversal to $x_1$.

\subsection{Technical hypotheses at hyperbolic level}

Along with the above structural assumptions, we shall further make the following two technical hypotheses at the hyperbolic level (i.e., the level without the presence of $q$ in our model \eqref{modelsyst}). 

\bigskip

{\bf (H1)} The eigenvalues of $\sum_j \xi_jdf_j(u_\pm)$ have constant
multiplicity with respect to $\xi\in \RR^d$, $\xi\ne 0$.

\bigskip

{\bf (H2)} The set of branch points of the eigenvalues of $(df_1)^{-1}(i\tau + \sum_{j\ne 1} i\xi_jdf_j)(u_\pm)$,
$\tau \in \RR$, $\tilde \xi\in \RR^{d-1}$ is the (possibly
intersecting) union of finitely many smooth curves
$\tau=\eta_q^\pm(\tilde \xi)$, on which the branching eigenvalue has
constant multiplicity $s_q$ (by definition $\ge 2$).
\bigskip

These hypotheses are crucially used in our construction of the Green kernel and the resolvent solution in the low-frequency regimes, and borrowed directly from the previous
analyses introduced by Zumbrun \cite{Z3,Z4}. The condition (H1) is the standard non-strict hyperbolicity with constant multiplicity assumption. Whereas, (H2) concerns singularities of the branching eigenvalues. It provides certain compactness properties that allow us to later on perform matrix perturbations with acceptable errors. We refer the interested reader to \cite{Z3}, Section 4.1, for a thorough discussion on these conditions. In particular, (H2) is satisfied always in dimension $d = 2$ or for rotationally invariant systems in dimensions $d>2$. 

It is perhaps worthwhile to mention that these hypotheses might be weakened or dropped as observed in \cite{N1} for the case of hyperbolic-parabolic settings. More precisely, we were able to allow eigenvalues with variable multiplicities (for instance, in case of the compressible magnetohydrodynamics equations) and to drop or remove the technical condition (H2) in establishing the stability. However, we leave it for the future work, as our current purpose is to show that the well-developed stability theory \cite{Z3,Z4} for the hyperbolic-parabolic systems can be adapted into the current hyperbolic--elliptic settings despite the presence of singularity in the eigenvalue ODE systems, among other technicalities.

\bigskip

Finally, regarding regularity of the system, we make the following additional assumption: 

\bigskip

{\bf (H0)} $f_j,g, A_0 \in C^{s+1}$, for some $s$ large, $s\ge s(d)$ with $s(d):=[(d-1)/2]+5$.

\bigskip

The regularity is not optimal due to repeated use of Sobolev embeddings in our estimates of the solution operator, especially the energy-type estimate of the high-frequency solution operator in Section \ref{sec-S2}. One could lower the required regularity by deriving much more detailed description of the resolvent solution following Zumbrun \cite{Z4}, instead of using the energy-type estimate, in the high-frequency regime.   

\bigskip

Throughout the paper, to avoid repetition let us say Assumption (S) to mean the set of Assumptions (S1), S(2), and (S3); Assumption (A) for (A1) and (A2); and, Assumption (H) for (H0), (H1), and (H2).

\subsection{The Evans function condition}
As briefly mentioned in the Abstract of the paper, we prove a theorem asserting that {\em an Evans function condition implies nonlinear time-asymptotic stability of small radiative shock profiles,} under Assumptions (S), (A), and (H) mentioned earlier. Shortly below, we shall introduce the Evans function condition that is sufficient for the stability. To do so, let us formally write the system \eqref{modelsyst} in a nonlocal form:
\begin{equation}\label{modelu}\left\{\begin{array}{lc}u_t + \sum_{j=1}^df_j (u)_{x_j} -L\Div \,\cK\,\nabla g(u) =0,\\u_{|t=0} = u_0(x),
\end{array}\right.\end{equation} with $\cK:=(-\nabla \Div \cdot+ 1)^{-1}$. We then linearize the system around the shock profile $U$. The linearization formally reads
 \begin{equation}\label{linsys-intro}
    \begin{aligned}\begin{matrix}
    u_{t} - \cL u=0,\qquad\cL u:=-\sum_j(A_j(x_1)u)_{x_j} - \cJ u \end{matrix}
    \end{aligned}
\end{equation}
with initial data $u(0)=u_0$, and $\cJ u := -L\Div \,\cK\,\nabla (B(x_1)u)$. Here, we denote $A_j(x_1):=df_j(U(x_1))$ and $B(x_1):=dg(U(x_1))$.  
Hence, the Laplace--Fourier transform, with respect to variables $(t,\tx)$, $\tx$ the transversal variable, applied to equation \eqref{linsys-intro} gives
\begin{equation}
    \begin{aligned}
    \lambda u- \cL_\txi u &= S
    \end{aligned}
     \label{spectralsyst}
\end{equation}
where source $S$ is the initial data $u_0$. An evident necessary condition for stability is the absence of $L^2$ solutions for values of $\lambda$ in $\{ \real \lambda> 0\}$, % \backslash \{0\}$,
for each $\txi\in \RR^{d-1}$,
noting that, when $\txi=0$, $\lambda = 0$ is the eigenvalue associated to translation invariance.

We establish a sufficient condition for stability, namely, the strong spectral stability condition, expressing in term of the Evans function. For a precise statement, let us denote $D_{\pm}(\lambda,\txi)$ (see their definition in \eqref{Evansfns-def} below) the two Evans functions associated with the linearized operator about the profile in regions $x_1\gtrless 0$, correspondingly. Let $\zeta = (\tilde \xi,\lambda)$. Introduce polar coordinates
$\zeta = \rho \hat \zeta$, with $\hat \zeta = (\hat{\tilde \xi},\hat
\lambda) $ on the sphere $S^d$, and  write $D_{\pm}(\lambda,\txi)$ as $D_{\pm}(\hat \zeta,\rho)$. Let us define $S^d_+ = S^d \cap \{\real\hat
\lambda \ge 0\}$. Our strong spectral (or uniform Evans) stability assumption then reads

\bigskip

{{\bf (D)} $D_\pm(\hat \zeta,\rho)$ vanishes to precisely
the first order at $\rho=0$ for all $\hat \zeta \in S^d_+$ and has no
other zeros in $S^d_+ \times \bar \RR_+$.}

\bigskip

The assumption is assumed as in the general framework of Zumbrun \cite{Z3,Z4}. Possibly, it can be verified for small-amplitudes shocks by the work of Freist\"uhler and Szmolyan \cite{FS}. It is also worth mentioning an interesting work of Plaza and Zumbrun \cite{PZ}, verifying the assumption in one-dimensional case. In addition, the assumption can also be efficiently numerically checkable; see, for example, numerical computations in \cite{HLyZ1} for the case of gas dynamics.

We remark that even though we only consider in this paper the strong form of the spectral stability assumption (D), in the same vein of the main analysis in \cite{Z3,Z4}, our results should hold for a weaker form (thus more precise description for stability), namely, the refined stability assumption which involves signs of the second derivatives of $D_\pm(\hat \zeta,\rho)$ in $\rho$. In addition, extensions to nonclassical shocks should also be possible. Nevertheless, we shall omit to carry out all these possible extensions and confine the presentation to the case of the classical Lax shocks under the strong spectral assumption (D).

\subsection{Main result}
We are now ready to state our main result. 
\medskip

\begin{theorem}\label{theo-nonstab}
Let $(U,Q)$ be the Lax radiative shock profile. Assume all Assumptions (S), (A), (H), and the strong spectral stability assumption (D). Then, the profile $(U,Q)$ with small amplitude is time-asymptotically nonlinearly stable in dimensions $d\ge 2$.

More precisely, let $(\tilde u,\tilde q)$ be the solution to \eqref{modelsyst} with initial data
$\tilde u_0$ such that the initial perturbation $u_0:=\tilde u_0 - U$ is sufficiently small in $L^1\cap H^s$,
for some $s\ge [(d-1)/2]+5$. Then $(\tilde u,\tilde q)(t)$ exists globally in time and satisfies
\begin{equation*}
	\begin{aligned}
		&|\tilde u(x,t) - U(x_1)|_{L^p}
			\le C(1+t)^{-\frac{d-1}2(1-1/p)+\epsilon}|u_0|_{L^1\cap H^s}\\
		&|\tilde u(x,t) - U(x_1)|_{H^s}
			\le C(1+t)^{-(d-1)/4}|u_0|_{L^1\cap H^s}
\end{aligned}
\end{equation*}
and
\begin{equation*}
	\begin{aligned}
		&|\tilde q(x,t) - Q(x_1)|_{W^{1,p}}
			\le C(1+t)^{-\frac{d-1}2(1-1/p)+\epsilon}|u_0|_{L^1\cap H^s}\\
		&|\tilde q(x,t) - Q(x_1)|_{H^{s+1}} \le C(1+t)^{-1/4}|u_0|_{L^1\cap H^s}
\end{aligned}
\end{equation*}
for all $p\ge 2$; here, $\epsilon>0$ is arbitrarily small in case of $d=2$, and $\epsilon=0$ when $d\ge3$.
\end{theorem}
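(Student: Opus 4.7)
The plan is to follow the general program of Zumbrun \cite{Z3,Z4} for multi-dimensional viscous shock waves, fused with the one-dimensional radiative-shock analysis of Lattanzio-Mascia-Nguyen-Plaza-Zumbrun \cite{LMNPZ,NPZ} to handle the singularity at $x_1=0$ forced by (S3). I would begin by linearizing around $(U,Q)$ in the nonlocal form \eqref{linsys-intro}, and then, after Laplace transform in $t$ and Fourier transform in $\tx$, study the resolvent problem \eqref{spectralsyst} frequency by frequency with the aim of representing the semigroup $e^{\cL t}$ as an inverse Laplace-Fourier transform of the resolvent kernel $G_{\lambda,\txi}(x_1,y_1)$.

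The heart of the argument is a splitting of the $(\lambda,\txi)$-plane into a high-frequency region $|(\lambda,\txi)|\ge r$ and a low-frequency region $|(\lambda,\txi)|\le r$. In the high-frequency regime I would use the symmetrizer supplied by (A1) together with the compensating Kawashima-Shizuta structure (A2) to perform $H^s$ energy estimates directly on the resolvent equation, obtaining bounds that translate into $H^s$ decay of the high-frequency semigroup at rate $(1+t)^{-(d-1)/4}$. In the low-frequency regime I would construct $G_{\lambda,\txi}$ by gluing forward and backward decaying solutions of the associated spectral ODE at $\pm\infty$; the hypotheses (H1)-(H2) control the branching behavior of the constant-coefficient limit modes at $u_\pm$, while the apparent singularity at $x_1=0$ has to be desingularized in the style of \cite{LMNPZ}. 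The uniform Evans hypothesis (D), furnishing a first-order zero of $D_\pm$ at the origin, then yields, after inversion of Laplace-Fourier, $L^p$ linear estimates with exactly the rates appearing in the statement, together with a translational-mode contribution corresponding to shifts of the shock position.

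With the linear bounds in hand, the nonlinear step introduces a shock-location function $\delta(\tx,t)$ and decomposes the solution as $\tilde u(x,t)=U(x_1-\delta(\tx,t))+v(x,t)$, with $\delta$ absorbing the translational zero of $\cL$ and $v$ carrying the remainder. Writing the Duhamel representation in $(v,\delta)$ and controlling the quadratic nonlinearity by Moser-type estimates under the regularity (H0), I would close a continuous induction on an ansatz of the form
\begin{equation*}
\zeta(t):=\sup_{0\le\tau\le t}\Big[(1+\tau)^{(d-1)/4}|v(\tau)|_{H^s}+(1+\tau)^{\frac{d-1}{2}(1-1/p)-\epsilon}|v(\tau)|_{L^p}+|\delta(\tau)|_{H^s}+|\delta_t(\tau)|_{H^{s-1}}\Big]
\end{equation*}
by proving an inequality $\zeta(t)\le C|u_0|_{L^1\cap H^s}+C\zeta(t)^2$ for sufficiently small data, from which global existence together with the stated $L^p$ and $H^s$ rates for $\tilde u-U$ and $\tilde q-Q$ follow.

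The principal obstacle, flagged in the introduction itself, is the low-frequency resolvent analysis at the singular point $x_1=0$: the spectral ODE degenerates there because $\lambda_p(U(0))=0$, so the construction of decaying dual bases at $\pm\infty$ and the definition of $D_\pm$ require a singular-perturbation treatment of the ODE across $x_1=0$ uniformly in the transverse frequency $\txi$. Grafting the one-dimensional desingularization of \cite{LMNPZ} onto Zumbrun's conjugation-lemma and Kreiss-symmetrizer machinery in the transverse variables, and checking that the resulting resolvent kernel bounds remain uniform as $\txi\to 0$, is the main new technical content; once it is achieved, the passage from resolvent to $L^p$ semigroup estimates, and from these to the nonlinear iteration, proceeds along well-established lines.
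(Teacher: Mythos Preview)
Your linear program---high/low frequency splitting, resolvent-kernel construction with the conjugation lemma and hypotheses (H1)--(H2), and desingularization at $x_1=0$ in the style of \cite{LMNPZ,NPZ}---matches the paper's. Two points of your nonlinear closing argument, however, differ from what the paper actually does.

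First, and most importantly, the paper does \emph{not} introduce a shock-location function $\delta(\tx,t)$ or decompose $\tilde u = U(x_1-\delta)+v$. It works directly with $u:=\tilde u-U$ and closes a continuous induction on the simpler quantity
\[
\zeta(t):=\sup_{0\le s\le t}\Big(|u(s)|_{L^2}(1+s)^{(d-1)/4}+|u(s)|_{L^\infty}(1+s)^{(d-1)/2-\epsilon}\Big).
\]
The reason this works without phase tracking is that in $d\ge 2$ the transverse Fourier variables already supply decay $(1+t)^{-(d-1)/4}$ for $\cS_1(t)$ acting on $L^1$ data (Proposition~\ref{prop-estS}); the $\rho^{-1}$ blow-up of the resolvent kernel coming from the translational zero of $D_\pm$ is absorbed into this rate, so no explicit separation of the zero mode is needed. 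Your proposed ansatz with $\delta$ is the natural one-dimensional device, and while a $\delta$-tracking argument can be made to work in multi-$d$, it is more delicate (one must control $\|\delta\|_{H^s(\tx)}$ and the commutators generated by an $\tx$-dependent shift) and is precisely what the paper avoids.

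Second, you are missing the mechanism by which the paper obtains the $H^s$ rate. The high-frequency part $\cS_2(t)$ decays \emph{exponentially}, not at $(1+t)^{-(d-1)/4}$; the algebraic rate comes entirely from $\cS_1(t)$. To convert the resulting $L^2$ decay of $u$ into $H^s$ decay, the paper proves a separate \emph{nonlinear damping estimate} (Proposition~\ref{prop-damping}),
\[
|u(t)|_{H^k}^2\le e^{-\eta t}|u_0|_{H^k}^2+C\int_0^t e^{-\eta(t-s)}|u(s)|_{L^2}^2\,ds,
\]
obtained by combining Friedrichs-type and Kawashima compensating-matrix estimates on the full nonlinear perturbation equation. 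This is what feeds the $L^2$ rate back into $H^s$ and allows the Duhamel integrals involving $N^j(u,u_x)$ to be bounded by $|u|_{H^s}^2$. Without this ingredient your iteration would not close in $H^s$.
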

\medbreak

We obtain the same rate of decay in time as in the case of hyperbolic--parabolic setting (see, e.g., \cite{Z4}). This is indeed due to the fact that in low-frequency regimes the estimates for the Green kernel for both cases, here for the radiative systems and there for the hyperbolic--parabolic systems, are essentially the same, away from the singular point occurring in the first-order ODE system for the former case. 
 
 Let us briefly mention the abstract framework to obtain the main theorem. First, we look at the perturbation equations with respect to perturbation variable $u = \tilde u-U$, namely,
\begin{equation}\label{pert-sys}u_t - \cL u = N(u,u_x)_x,\end{equation} where $\cL u= - \sum_j(A_j(x_1) u)_{x_j}- \cJ u$ as defined in \eqref{linsys-intro} and $N(u,u_x)$ is the nonlinear remainder term. Since $\cL u$ is a zero-order
perturbation of the generator $- \sum_j(A_j(x_1) u)_{x_j}$ of a hyperbolic equation, it generates a $C^0$ semigroup $e^{\cL t}$ on the usual $L^2$ space which enjoys the inverse Laplace-Fourier transform formulae
\begin{equation}\label{iLFT}
	\begin{aligned}
		e^{\cL t}f(x)&=\frac{1}{(2\pi i)^d} \int_{\gamma -i\infty}^{\gamma+i\infty}
		\int_{\RR^{d-1}}e^{\lambda t+i\tilde x\cdot\tilde\xi} (\lambda-\cL_\txi)^{-1} \hat f(x_1,\txi)d\txi d\lambda,
	\end{aligned}
\end{equation}where $\cL_\txi$ is the Fourier-transformed version of the operator $\cL$ in the transversal variable $\tilde x$.

Having the solution operator $e^{\cL t}$ expressed as in \eqref{iLFT}, we may now write the solution of \eqref{pert-sys} by using
Duhamel's principle as
\begin{equation}\label{Duhamel-intro}
	\begin{aligned}
		u(x,t)&= e^{\cL t}u_0(x) + \int_0^t e^{\cL(t-s)}
		N(u,u_x)_x(x,s)\, ds,
%		q(x,t)& = -\cK\bigl(\Div u\bigr) (x,t).
	\end{aligned}
\end{equation}
noting that $q$ can always be recovered from $u$ by $q(x,t) = -\cK\bigl(\nabla g(u)\bigr) (x,t).$ Hence, the nonlinear problem is reduced to study the solution operator at the linearized level, or more precisely, to study the resolvent solution of the resolvent equation $$(\lambda - \cL_\txi) u = f.$$

The procedure might be greatly complicated by the circumstance that the
resulting
$(n+2)\times(n+2)$
 first-order ODE system
\begin{equation}\label{eq:firsto}
(\Theta(x_1)W)_{x_1}=\A(x_1,\lambda,\txi)W, \qquad \Theta(x_1):= \begin{pmatrix} A_1(x_1) & 0 \\ 0  & I_2 \end{pmatrix},
\end{equation}
is  {\it singular} at the point where the determinant of 
$A^1(x_1)$
vanishes, with
$\Theta$ dropping from rank $n+2$ to $n+1$. However,
as already observed in \cite{LMNPZ,NPZ}, we find in the end as usual
that the Green kernel $\cG_{\lambda,\txi}$ can be constructed, and contribution of the terms due to the singular point turns out to be time-exponentially decaying.
%This expectation in fact agrees with the resolvent kernel for the scalar
%convected-damped equation $u_t + a_pu_x=-LBu,$ for which we can find
%explicitly the Green function as a convected time-exponential decaying
%delta function similar as in the relaxation or real viscosity case.

\bigskip

The paper is organized as follows. In Section 2, we will study the resolvent solutions in low--frequency regions and define the two Evans functions, essential to the derivation of the pointwise Green kernel bounds which will be presented in Section 3. Once the resolvent bounds are obtained, estimates for the solution operator are straightforward, which will be sketched in Section 4. A damping nonlinear energy estimate is needed for nonlinear stability argument, and is derived in Section 5. In the final section, we recall the standard nonlinear argument where we use all previous linearized information to obtain the main theorem.

%
%That is, the solution of the linearized problem reduces to finding the Green kernel
%for the $u$-equation alone, which in turn amounts to solving the resolvent equation
%for $\cL$ with delta-function data, or, equivalently, solving the differential equation
%\eqref{eq:linLaplace} with source $S=\delta_y(x)$.
%We shall do this in standard fashion by writing \eqref{eq:linLaplace} as a first-order
%system and solving appropriate jump conditions at $y$ obtained by the requirement
%that $G_\lambda$ be a distributional solution of the resolvent equations.

%\section{Construction of the resolvent kernel}\label{sec:resolker}

\section{Resolvent solutions and the two Evans functions}
In this section, we shall construct resolvent solutions and introduce the two Evans functions that are crucial to our later analysis of constructing the resolvent kernel. We consider the linearization of \eqref{modelsyst} around the shock profile $(U,Q)$
\begin{equation}\label{lin-sys}
	\begin{aligned}
		u_t + \sum_{j=1}^d(A_j(x_1)u)_{x_j} +L\mbox{div}~q &=0,\\
		-\nabla~\mbox{div}~q + q+\nabla (B(x_1)u)&=0,
	\end{aligned}
\end{equation} where $A_j(x_1) = df_j (U(x_1))$, $B(x_1) = dg(U(x_1))$, and $q = (q^1,q^2,\cdots, q^d) \in \RR^d$. Since the coefficients depend only $x_1$ (through $U(x_1)$), we can apply the Laplace-Fourier transform to the system \eqref{lin-sys} in time $t$ and transversal variables $\tilde x$. Let us ignore for a moment the contribution from the initial data. The Laplace-Fourier transformed system then reads 
\begin{equation}\label{LF-sys}
	\begin{aligned}
		(\lambda +i A^{\txi})u + (A_1 u)_{x_1} +Lq^1_{x_1} + iLq^\txi &=0,\\
		-(q^1_{x_1} + iq^\txi)_{x_1} + q^1+ (Bu)_{x_1}&=0,\\
	-i\xi_j(q^1_{x_1} + iq^\txi) + q^j+ i\xi_jBu&=0,\qquad j\not=1,
\end{aligned}
\end{equation} where for simplicity we have denoted $A^{\txi}:=\sum_{j\not=1}\xi_jA_j$ and $q^{\txi}:=\sum_{j\not=1}\xi_jq_j$. Multiplying the last equations by $i\xi_j$, $j\not =1$, and summing up the result, we obtain
$$(q^1_{x_1}+ iq^\txi)|\txi|^2 + iq^\txi - |\txi|^2 B u =0.$$
From this identity, we can solve $i q^\txi$ in term of $u$ and $q^1$ and then substitute it into the first two identities in the system \eqref{LF-sys}. We then obtain
\begin{equation}\label{LF-system}
	\begin{aligned}
		\Big(\lambda +i A^{\txi} + \frac{|\txi|^2}{1+|\txi|^2}LB\Big)u + (A_1 u)_{x_1} +\frac{1}{1+|\txi|^2}Lq^1_{x_1}&=0,\\
		-q^1_{x_1x_1}+ (1+|\txi|^2) q^1+ (Bu)_{x_1}&=0.
\end{aligned}
\end{equation}
System \eqref{LF-system} is a simplified and explicit version of our previous abstract form $\lambda u - \cL_\txi u =0$, where $\cL_\txi$ is defined as the Fourier transform of the linearized operator $\cL$. %We first obtain the following.

%TODO?
%\begin{lemma}\label{lem-essspectrum} Essential spectrum of $\cL_\txi$ is confined into the region
%$$\Big\{(\lambda,\txi)\in \CC\times \RR^{d-1}~:~\real\lambda \le -\frac{\theta_1(|\txi|^2+|\I \lambda|^2)}{1+|\txi|^2+|\I \lambda|^2}\Big\},$$ for some small $\theta_1>0$.
%\end{lemma}
%\begin{proof} See Appendix \ref{app-resbound}.\end{proof}

Now, by defining $$p^1:=Bu-q^1_{x_1},$$ we then easily derive the following first order ODE system from \eqref{LF-system}
\begin{equation}\label{ODEsys}
	\begin{aligned}
		(A_1 u)_{x_1} &= -(\lambda+i A^{\txi}+LB)u +(1+|\txi|^2)^{-1}Lp^1,\\
		q^1_{x_1} &=Bu - p^1,\\
	p^1_{x_1}&=-(1+|\txi|^2)q^1.
\end{aligned}
\end{equation} 
The key observation here is that this first-order ODE system is very similar to the system that we have studied for the one--dimensional case, considering the variable $\txi$ as a parameter.

%We thus recall our construction of the resolvent kernel in one--dimensional case for
%the ODE system with unknown $W := (u,q^1,p)^\top$
%\begin{equation}\label{eqW}
%	\left(\Theta(x_1) W\right)_{x_1} = \A(x_1,\lambda,\txi)W,
%\end{equation}	
%where
%\begin{equation*}\begin{aligned}
%	&\Theta(x_1):= \begin{pmatrix} A_1(x_1) & 0 \\ 0  & I_2 \end{pmatrix},
%\\&
%	\A(x_1,\lambda,\txi):= \begin{pmatrix}
%	 -(\lambda+i A^{\txi}+LB) & 0 & (1+|\txi|^2)^{-1}L \\ B & 0 & -1 \\ 0  & -(1+|\txi|^2) & 0
%	 	\end{pmatrix},
%\end{aligned}\end{equation*}
%that degenerates at $x_1=0$.

\subsection{Stable/unstable dimensions} Next, we can diagonalize
$A_1$ with recalling that $A_1 (x_1)= df_1(U(x_1))$ has distinct and nonzero eigenvalues by hyperbolicity assumption (S1). Let us denote $a_p(x_1) = \lambda_p(U(x_1))$ with $\lambda_p(U)$ being the $p^{th}$ eigenvalue of $df_1(U)$, introduced in Section \ref{shockprofile}. By hyperbolicity, there exists a bounded diagonalization matrix $T(x_1)$ such that the matrix $A_1(x_1)$ can be diagonalized as follows: \begin{equation}\label{a-diag} \tilde A_1(x_1):=T^{-1} A_1 T(x_1) = \begin{pmatrix}
a_- (x_1)&&0\\&a_p(x_1)&\\0&&a_+(x_1)\end{pmatrix}
\end{equation}
where $a_-$ is the $(p-1)\times (p-1)$ matrix and negative definite, 
$a_+$ is the $(n-p)\times (n-p)$ matrix and positive definite, and $a_p\in \RR$, satisfying
$a_p(+\infty) <0< a_p(-\infty)$ (by the Lax entropy conditions \eqref{Laxcond}).

Defining $v:=T^{-1}u$, we thus obtain a diagonalized system from
\eqref{ODEsys}: 
\begin{equation}
\label{specsys-v}
\begin{aligned}
(\tilde A_1 v)_{x_1} &= - \Big(\lambda + i\tilde A^\txi+\tilde L\tilde B+(T^{-1})_{x_1} A_1 T\Big)v + (1+|\txi|^2)^{-1}\tilde Lp^1,\\
q^1_{x_1} &= \tilde B v - p^1,\\
p^1_{x_1} &= -(1+|\txi|^2)q^1,
\end{aligned}
\end{equation}
where $\tilde L:=T^{-1}L$, $\tilde B:=BT$, and $\tilde A^\txi = T^{-1} A^\txi T$.

{\em We shall construct the Green kernel for this diagonalized ODE system \eqref{specsys-v}.} To do so, let us write the system \eqref{specsys-v} in our usual matrix form with unknown $W := (v,q^1,p^1)^\top$
\begin{equation}\label{eqW}
	\left(\Theta(x_1) W\right)_{x_1} = \A(x_1,\lambda,\txi)W,
\end{equation} %which is degenerate at $x_1=0$,
where we have denoted $\Theta(x_1):= \begin{pmatrix} \tilde A_1(x_1) & 0 \\ 0  & I_2 \end{pmatrix}$ and 
\begin{equation*}\begin{aligned}
	\A(x_1,\lambda,\txi):= \begin{pmatrix}
	 -(\lambda + i\tilde A^\txi+\tilde L\tilde B+(T^{-1})_{x_1} A_1 T) & 0 & (1+|\txi|^2)^{-1}L \\ B & 0 & -1 \\ 0  & -(1+|\txi|^2) & 0
	 	\end{pmatrix}.
\end{aligned}\end{equation*}
We note that since $a_p(0) =0$ (see the assumption (S3)), the matrix $A_1$, and thus $\Theta$, is degenerate at $x_1=0$. We shall see shortly below that this {\em singular} point causes the inconsistency in dimensions of unstable and stable manifolds, and thus the usual definition of the Evans function must be modified.     

Let us denote the limits of the coefficients as
\begin{equation}
\tilde A_\pm := \lim_{x_1\to\pm\infty}  \tilde A(x_1), \qquad \tilde B_\pm := \lim_{x_1\to\pm\infty} \tilde B(x_1), \qquad \tilde L_\pm := \lim_{x_1\to\pm\infty} \tilde L(x_1),
\end{equation}
and \begin{equation}\label{a-pm}
\A_\pm(\lambda,\txi) := \begin{pmatrix} -\tilde A_\pm^{-1}(\lambda +i\tilde A^\txi_\pm+
\tilde L_\pm\tilde B_\pm) & 0 & (1+|\txi|^2)^{-1}\tilde A_\pm^{-1}\tilde L_\pm \\ \tilde B_\pm & 0 & -1 \\
0 & -(1+|\txi|^2) & 0
\end{pmatrix}.
\end{equation}
Here, note that $\A_\pm(\lambda,\txi)$ is {\em not} quite the limiting matrix of $\A(x_1,\lambda,\txi)$ at infinities. Having defined these asymptotic matrices, the asymptotic system of \eqref{eqW} can be written as
\begin{equation}
\label{asympsyst}
 W' = \A_\pm(\lambda,\txi)W.
\end{equation}

We need to determine the dimensions of the stable/unstable eigenspaces. Observe that simple computations show $$\begin{aligned}\det (\mu - \A_{\pm}) =& \mu^2 \det (\mu+\tilde A_\pm^{-1}(\lambda +i\tilde A^\txi_\pm+
\tilde L_\pm\tilde B_\pm)) -(1+|\txi|^2) \det (\mu+\tilde A_\pm^{-1}(\lambda +i\tilde A^\txi_\pm)),\end{aligned}$$
where since for $\rho = |(\lambda,\txi)|\to 0$ the absolute value of $\tilde A_\pm^{-1}(\lambda +i\tilde A^\txi_\pm) = \cO(\rho)$, the above yields one strictly
positive and one strictly negative eigenvalues at each side of $x=\pm\infty$, denoting $\mu^\pm_1$ and $\mu^\pm_{n+2}$ (later on, giving
one fast-decaying and one fast-growing modes). Looking at slow eigenvalues $\mu = \cO(\rho)$, one easily obtains that the first term in the above computation of $\det (\mu - \A_{\pm})$ contributes $\cO(\rho^2)$ and thus eigenvalues $\mu$ are of the form \begin{equation}\label{e-values}\mu_j^\pm(\lambda,\txi) = \mu_{j0}^\pm(\lambda,\txi) + \cO(\rho^2),\end{equation} where $\mu_{j0}^\pm$ are eigenvalues of
$-\tilde A_\pm^{-1}(\lambda +i\tilde A^\txi_\pm)$. Now, notice that $\tilde A_\pm^{-1}(\lambda +i\tilde A^\txi_\pm)$ has no center subspace, i.e., no purely imaginary eigenvalue, for $\real \lambda>0$. Indeed, if it were one, say $i\xi_1$, then $\tilde A_\pm^{-1}(\lambda +i\tilde A^\txi_\pm)v=i\xi_1 v$, or equivalently, $\lambda v = - \sum_{j=1}^di\xi_j A^j_\pm v$, for some $v\in\RR^n$, which shows that $\lambda \in i\RR$ by hyperbolicity of the matrix $\sum_{j=1}^d\xi_j A^j_\pm$. Thus, $\tilde A_\pm^{-1}(\lambda +i\tilde A^\txi_\pm)$ has no center subspace. Consequently, the numbers of stable/unstable eigenvalues of $\tilde A_\pm^{-1}(\lambda +i\tilde A^\txi_\pm)$ persist as $|\txi|\to 0$, and remain the same as those of $\tilde A_\pm^{-1}$, and thus of $A^1_\pm$. We readily conclude that at $x=+\infty$, there are $p+1$
unstable eigenvalues (i.e., those with positive real parts) and $n-p+1$ stable eigenvalues (i.e., those with negative real parts) . The stable
$S^+(\lambda,\txi)$ and unstable $U^+(\lambda,\txi)$ manifolds, which consist of solutions
that decay or grow at $+\infty$, respectively, have dimensions
\begin{equation}
\label{dims+}
\begin{aligned}
\dim U^+(\lambda,\txi) &= p+1,\\
\dim S^+(\lambda,\txi) &= n-p+1,
\end{aligned}
\end{equation}
in $\Re \lambda > 0$. Likewise, there exist $n-p+2$ unstable
eigenvalues and $p$ stable eigenvalues so that the stable (solutions
which grow at $-\infty$) and unstable (solutions which decay at
$-\infty$) manifolds $S^-(\lambda,\txi)$ and $U^-(\lambda,\txi)$, respectively, have dimensions
\begin{equation}
\label{dims-}
\begin{aligned}
\dim U^-(\lambda,\txi) &=p,\\
\dim S^-(\lambda,\txi) &=n-p+2.
\end{aligned}
\end{equation}

\begin{remark}\textup{
Notice that, unlike customary situations in the Evans function
literature (see, e.g., \cite{AGJ,Z3,Z4}), the dimensions of
the stable (resp. unstable) manifolds $S^+$ and $S^-$ (resp. $U^+$
and $U^-$) \emph{do not agree}. }
\end{remark}

\subsection{Asymptotic behavior}
We study %at
the asymptotic behavior of solutions to the
first order ODE system \eqref{eqW} away from the singularity point $x=0$. To simplify our presentation, we consider the case when $x \to +\infty$. Note that our treatment will be unchanged if there were finitely many singular points. We pay special attention to the small
frequency regime, $\rho \to 0$. By performing a column permutation of the last two columns in \eqref{a-pm}, with an error of order $\cO(\rho^2)$, and by further performing row reductions with observing that spectrums of the two matrices $\tilde A_+ ^{-1}(\lambda +i\tilde A^\txi_+ )$ are of order $\cO(\rho)$, strictly separated from $\pm\cO(1)$, we find that there exists a smooth matrix $V(\lambda,\txi)$ such that
\begin{equation} \label{eq-bl}V^{-1}\A_+ V =
\begin{pmatrix}H&0\\0&P\end{pmatrix}\end{equation} with blocks $P = \mbox{diag}\{P_+,P_-\} + \cO(\rho)$ with $\pm\real P_\pm>0$ and
$$\begin{aligned}H(\lambda,\txi) &= H_0(\lambda,\txi) +
\cO(\rho^2)\\H_0(\lambda,\txi)&:=-\tilde A_+ ^{-1}(\lambda +i\tilde A^\txi_+ ) = -T^{-1}A_+ ^{-1}(\lambda +i A^\txi_+ )T,\end{aligned}$$
for $T$ being the diagonalization matrix defined as in \eqref{a-diag}. We note that $H$ which determines all slow modes is spectrally equivalent to $$-A_+ ^{-1}(\lambda +i A^\txi_+ ) + \cO(\rho^2).$$

%Thus, asymptotically, the behavior of solutions to the limiting system \eqref{asympsyst} is similar to those in the hyperbolic-parabolic cases %investigated deliberately
%studied systematically by Zumbrun \cite{Z3,Z4}.
%
%
We then obtain the following lemma.
\begin{lemma}
\label{lem:asymmodes} For $\rho$ sufficiently small, the spectral system \eqref{asympsyst} associated to the
limiting, constant coefficients asymptotic behavior of
\eqref{ODEsys} has a basis of solutions
\[
e^{\mu^\pm_j(\lambda,\txi) x_1} V_j^\pm(\lambda,\txi), \quad x \gtrless 0, \:
j=1,...,n+2,
\]
where $\{V_j^\pm\}$, necessarily eigenvectors of $\A_\pm$, consist of $2n$
slow modes associated to slow eigenvalues (as in \eqref{e-values})
\begin{equation}\label{e-valuesapp}
\mu_j^\pm(\lambda,\txi) = \mu_{j0}^\pm(\lambda,\txi) + \cO(\rho^2)\qquad j=2,...,n+1,
\end{equation}
with $\mu_{j0}^\pm$ eigenvalues of
$-\tilde A_\pm^{-1}(\lambda +i\tilde A^\txi_\pm)$, and four fast modes,
\[
\begin{aligned}
\mu^\pm_1 (\lambda,\txi) &= \pm \theta^\pm_1 + \cO(\rho), \\
\mu^\pm_{n+2} (\lambda,\txi) &= \mp \theta^\pm_{n+2} + \cO(\rho).
\end{aligned}
\]where $\theta^\pm_1$ and $\theta^\pm_{n+2}$ are positive
constants.
\end{lemma}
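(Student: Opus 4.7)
My plan is to read off the basis of solutions directly from the block diagonalization of $\A_+$ already supplied by \eqref{eq-bl}. Since \eqref{asympsyst} is a constant-coefficient linear ODE, a basis of solutions to $W'=\A_+W$ is given by $\{e^{\mu x_1}V\}$ as $(\mu,V)$ ranges over the eigenpairs of $\A_+$, and the block decomposition $V^{-1}\A_+V=\mbox{diag}(H,P)$ splits these eigenpairs into the $n$ contributed by the slow block $H$ and the two contributed by the fast block $P$. I would treat the two blocks in turn and then reassemble, with the analysis at $x_1\to-\infty$ being entirely parallel.

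For the slow block, I note that $H=H_0+\cO(\rho^2)$ with $H_0=-\tilde A_+^{-1}(\lambda+i\tilde A^\txi_+)$, and since both $\lambda$ and $\tilde A^\txi_+$ are of order $\rho$, the $n$ eigenvalues $\mu_{j0}^+$ of $H_0$ are themselves $\cO(\rho)$. At points $(\lambda,\txi)$ where these eigenvalues are simple, analytic perturbation theory gives the expansion \eqref{e-valuesapp} directly; at branch points, hypothesis (H2) restricts the exceptional set to a finite union of smooth curves $\tau=\eta_q^\pm(\txi)$, along which a Puiseux-type expansion preserves the $\cO(\rho^2)$ bound on the correction. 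The corresponding eigenvectors $V_j^+(\lambda,\txi)$ are then obtained by pulling the eigenvectors of $H$ back through the change of basis matrix $V$ of \eqref{eq-bl}.

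For the fast block, $P=\mbox{diag}(P_+,P_-)+\cO(\rho)$ with $\real P_+>0$ and $\real P_-<0$ uniformly for $\rho$ small. At $\rho=0$ the block is diagonal with eigenvalues $\theta_1^+>0$ and $-\theta_{n+2}^+<0$ bounded away from zero, these fast modes originating from the scalar pair $q^1_{x_1}=-p^1$, $p^1_{x_1}=-(1+|\txi|^2)q^1$ (yielding the characteristic equation $\mu^2=1+|\txi|^2$) in the decoupled limit. The $\cO(\rho)$ perturbation then yields $\mu^+_1=\theta_1^++\cO(\rho)$ and $\mu^+_{n+2}=-\theta_{n+2}^++\cO(\rho)$ together with their eigenvectors $V_1^+,V_{n+2}^+$. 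The mirror argument at $-\infty$ uses $\A_-$ in place of $\A_+$, producing the sign-reversed fast modes $\mu_1^-=-\theta_1^-+\cO(\rho)$ and $\mu_{n+2}^-=\theta_{n+2}^-+\cO(\rho)$ required for the correct decay/growth behavior as $x_1\to-\infty$.

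The only real technical point is justifying the quadratic (rather than merely linear) accuracy of the slow modes. This rests on the fact that the residual off-diagonal coupling between the slow and fast blocks in the reduction leading to \eqref{eq-bl} is $\cO(\rho^2)$, not $\cO(\rho)$: any first-order cross-terms can be annihilated by a further row/column reduction exploiting the $\cO(1)$ spectral gap between the slow spectrum (of order $\cO(\rho)$) and the fast spectrum (bounded away from zero uniformly in $\rho$), leaving only a quadratic residual. Combined with the perturbation statements of the previous paragraphs, this delivers the claimed expansions.
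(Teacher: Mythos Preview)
Your approach is essentially the same as the paper's: both arguments read off the fast and slow eigenpairs from the block decomposition \eqref{eq-bl}, with the slow expansion \eqref{e-valuesapp} coming from $H=H_0+\cO(\rho^2)$ and the fast modes from $P=\mbox{diag}(P_+,P_-)+\cO(\rho)$. The only notable difference in emphasis is that the paper, for the construction of the slow eigenvectors $V_j^\pm$, explicitly invokes the classification of slow modes into elliptic, hyperbolic, and glancing types via hypothesis (H1), and uses (H2) to guarantee continuous diagonalization of the glancing blocks (citing \cite{Z3,Z4}); your Puiseux remark gestures at this but does not spell out that (H2) is what ensures the eigenvectors, not just the eigenvalues, can be chosen continuously through the branch set.
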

\begin{proof} As discussed above, there are one eigenvalue with a strictly positive real part and one with a strictly negative real part at $x=\pm \infty$, giving four fast modes. Whereas, $2n$ slow modes are determined by the matrix $H$, which is spectrally equivalent to $$-A_\pm ^{-1}(\lambda +i A^\txi_\pm ) + \cO(\rho^2),$$ which gives the expansion \eqref{e-valuesapp}. Constructing the eigenvectors $V_j^\pm$ of $\A_\pm$ associated to these slow eigenvalues can be done similarly as in \cite{Z4}, Lemma 4.8, since the governing matrix $-A_\pm ^{-1}(\lambda +i A^\txi_\pm ) $ is precisely the same as those studied in the hyperbolic-parabolic systems. Note that these matrices purely come from the hyperbolic part of the system. 

The main idea of the construction is to use the assumption (H1) to separate the slow modes into  intermediate--slow (or so--called elliptic) modes for which $|\real \mu_j^\pm|\sim \rho$, super-slow (hyperbolic) modes for which $|\real \mu_j^\pm|\sim \rho^2$ and $\I\lambda$ is bounded away from any associated branch singularities $\eta_j(\txi)$, and super-slow (glancing) modes for which $|\real \mu_j^\pm|\sim \rho^2$ and $\I\lambda$ is within a small neighborhood of an associated branch singularity $\eta_j(\txi)$. Finally, thanks to the assumption (H2), the glancing blocks can also be diagonalized continuously in $\lambda$ and $\tilde \xi$, and thus associated eigenvectors can be constructed. We refer to \cite[Lemma 4.19]{Z3} for details.
 \end{proof}

%\begin{remark}
%\textup{Parameters $\gamma_{21,\psi^\pm_j},\gamma_{21,\phi^\pm_j}$ are viewed as diagonalization errors; see Lemma 5.22, \cite{Z4}, for detailed discriptions of these numbers.}
%\end{remark}
%%\begin{proof}

%\end{proof}

In view of the structure of the asymptotic systems, we are able to
conclude that for each initial condition $x_0
> 0$, the solutions to \eqref{ODEsys} in $x_1 \geq x_0$ are spanned
by decaying/growing modes
\begin{equation}\label{phi+}\begin{aligned}\Phi^+:&=\{\phi^+_1,...,\phi^+_{n-p+1}\},
\\\Psi^+:&=\{\psi_{n-p+2}^+,...,\psi_{n+2}^+\},\end{aligned}\end{equation}
as $x_1 \to +\infty$, whereas for each initial condition $x_0 < 0$,
the solutions to \eqref{ODEsys} are spanned in $x _1\leq -x_0$ by
growing/decaying modes
\begin{equation}\label{phi-}\begin{aligned}\Psi^-:&=\{ \psi_1^-,...,
\psi^-_{n-p+2}\},\\
\Phi^-:&=\{\phi_{n-p+3}^-,...,\phi^-_{n+2}\},\end{aligned}\end{equation}
as $x _1\to -\infty$. {\em Later on, these modes will be extended on the whole line $x_1\in \RR$, by writing them as linear combinations of the corresponding modes that form a basis of solutions in respective regions $x\le -x_0$, $x\ge x_0$, or $|x|\le |x_0|$.}

We rely on the conjugation lemma of \cite{MeZ1} to link such modes
to those of the limiting constant coefficient system
\eqref{asympsyst}.
\begin{lemma}\label{lem-estmodes}\cite[Lemma 4.19]{Z3}
For $\rho$ sufficiently small, there exist unstable/stable (i.e., growing/decaying at $+\infty$ and decaying/growing at $-\infty$)
solutions $\psi^\pm_j(x_1,\lambda,\txi),\phi_j^\pm(x_1,\lambda,\txi)$, in $x_1
\gtrless \pm x_0$, of class $C^1$ in $x_1$ and continuous in $\lambda,\txi$,
satisfying
\begin{equation}\label{est-modes}
\begin{aligned}
\psi^\pm_j(x_1,\lambda,\txi) &= \gamma_{21,\psi^\pm_j}(\lambda,\txi)e^{\mu_j^\pm(\lambda,\txi)x_1} V_j^\pm(\lambda,\txi) (1
+ \cO(e^{-\eta|x_1|})), \\
\phi^\pm_j(x_1,\lambda,\txi) &= \gamma_{21,\phi^\pm_j}(\lambda,\txi)e^{\mu_j^\pm(\lambda,\txi)x_1} V_j^\pm(\lambda,\txi) (1
+ \cO(e^{-\eta|x_1|})),
\end{aligned}
\end{equation}
where $\eta>0$ is the decay rate of the traveling wave, and
$\mu_j^\pm$ and $V_j^\pm$ are as in Lemma \ref{lem:asymmodes} above. Here, the factors $$\gamma_{21,\psi^\pm_j},\gamma_{21,\phi^\pm_j}\sim 1$$ for fast and intermediate-slow modes, and for hyperbolic super-slow modes, and $$\begin{aligned}\gamma_{21,\psi^\pm_j}&\sim 1+[\rho^{-1}|\I \lambda - \eta_j^\pm(\txi)|+\rho]^{t_{\psi^\pm_j}}\\\gamma_{21,\phi^\pm_j} &\sim 1+[\rho^{-1}|\I \lambda - \eta_j^\pm(\txi)|+\rho]^{t_{\phi^\pm_j}}\end{aligned}$$
for glancing super--slow modes, for some ${t_{\phi^\pm_j}},{t_{\psi^\pm_j}} <1$ depending on $s_j$. Here, the symbol $\sim$ means that we can obtain upper and lower bounds independent of smallness of $\rho$. 
\end{lemma}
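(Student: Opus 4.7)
The plan is to reduce the statement to the already-proven Lemma 4.19 of \cite{Z3} by exploiting the fact that, once we restrict to the half-lines $\pm x_1 \geq x_0 > 0$, the singularity of $\Theta$ at $x_1=0$ is irrelevant. Concretely, for $x_1 \geq x_0$ the matrix $\tilde A_1(x_1)$ is invertible, so multiplying \eqref{eqW} by $\Theta^{-1}$ recasts the system in standard form $W' = M(x_1,\lambda,\txi) W$, where $M = \Theta^{-1}(\A - \Theta')$. By \eqref{layerdecay}, the coefficient $M(x_1,\lambda,\txi)$ converges to its limit $M_+(\lambda,\txi)$ (which is spectrally equivalent to $\A_+$) at exponential rate $\eta$.

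With this in hand, I would invoke the conjugation lemma of \cite{MeZ1} to produce a change of variables $W = P(x_1,\lambda,\txi) Z$ with $P = I + \cO(e^{-\eta|x_1|})$, continuous in $(\lambda,\txi)$ and $C^1$ in $x_1$, reducing the equation on $\{x_1 \geq x_0\}$ to the constant-coefficient system $Z' = M_+ Z$. Combining $P$ with the basis of solutions $e^{\mu_j^+ x_1} V_j^+$ supplied by Lemma \ref{lem:asymmodes} gives candidate modes of the precise asymptotic form claimed in \eqref{est-modes}, up to determining the normalizing factors $\gamma_{21}$. The construction on $\{x_1 \leq -x_0\}$ is identical.

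The remaining work is to track the sizes of the eigenvectors $V_j^\pm$, which become the factors $\gamma_{21,\phi_j^\pm}$ and $\gamma_{21,\psi_j^\pm}$ after one normalizes the modes. For the two fast modes and for the intermediate-slow (elliptic) modes, $\A_\pm$ is smoothly diagonalizable on the relevant spectral block, so $|V_j^\pm|\sim 1$ and $\gamma_{21}\sim 1$. The delicate case is the super-slow hyperbolic/glancing block, governed by $-A_\pm^{-1}(\lambda + i A^\txi_\pm)$, which is generically only Jordan-diagonalizable at branch points $\I\lambda = \eta_j^\pm(\txi)$. Invoking (H1)--(H2), one partitions this block into pieces of constant multiplicity $s_j$ and, following the construction of \cite[Lemma 4.19]{Z3}, continuously diagonalizes each glancing block. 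The cost of this diagonalization—paid through the factor $[\rho^{-1}|\I\lambda - \eta_j^\pm(\txi)|+\rho]^{t}$ with $t<1$ depending on $s_j$—is precisely the source of the $\gamma_{21}$ factors in the glancing case. Since the slow modes come purely from the hyperbolic part of $\A_\pm$, these estimates are identical to those in the hyperbolic--parabolic setting treated in \cite{Z3,Z4}.

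The main obstacle is the uniform control of $V_j^\pm$ through the branch singularities: away from $\eta_j^\pm(\txi)$ the splitting into spectral blocks is routine, but maintaining continuous selections of eigenvectors across the branch locus requires the constant-multiplicity structure guaranteed by (H2) together with a careful choice of normalization. This is exactly the content for which one cites \cite[Lemma 4.19]{Z3}, so the proof reduces to verifying that the hyperbolic subblock of $\A_\pm$ here coincides (up to the similarity by $T$ used in \eqref{a-diag}) with the one treated there; this is immediate from the formula $H_0(\lambda,\txi) = -T^{-1}A_+^{-1}(\lambda + i A^\txi_+)T$ derived in \eqref{eq-bl}.
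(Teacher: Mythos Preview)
Your proposal is correct and follows essentially the same route as the paper: the paper does not give a self-contained argument but instead (in the sentence preceding the lemma and in the proof of Lemma~\ref{lem:asymmodes}) invokes the conjugation lemma of \cite{MeZ1} to pass from the variable-coefficient system on $\pm x_1 \ge x_0$ to the constant-coefficient asymptotic system, and then defers the eigenvector construction and the $\gamma_{21}$ bounds in the glancing case entirely to \cite[Lemma~4.19]{Z3}, noting that the slow block $H_0 = -T^{-1}A_\pm^{-1}(\lambda + iA^{\txi}_\pm)T$ is exactly the hyperbolic matrix treated there. Your write-up simply spells these steps out more explicitly.
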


\begin{rem}
\textup{The factors $\gamma_{21,\psi^\pm_j}$ are viewed as diagonalization errors which were introduced by Zumbrun in his study of shock waves for hyperbolic/parabolic systems; see Lemma 4.19, \cite{Z3}, or Lemma 5.22, \cite{Z4}, for detailed descriptions, including, e.g., explicit computations for ${t_{\phi^\pm_j}},{t_{\psi^\pm_j}}$.}
\end{rem}

%\begin{proof}

%This a direct application of the conjugation lemma of \cite{MeZ1}
%(see also the related gap lemma in \cite{GZ,ZH,MaZ1,MaZ3}).
%

It will be convenient in constructing the Green kernel to define the adjoint
normal modes. Thus, let us denote
\begin{equation}\label{adjoint}\begin{pmatrix} \tilde \Psi^- & \tilde \Phi^-\end{pmatrix}
:= \begin{pmatrix} \Psi^- & \Phi^-\end{pmatrix}^{-1} \Theta^{-1}.
\end{equation}
We then obtain the following estimates.
\begin{lemma}\label{lem-estadj} For $|\rho|$ sufficiently small and $|x_1|$ sufficiently large,
\begin{equation}\label{est-adjmodes}
\begin{aligned}
\tilde \psi^\pm_j(x_1,\lambda,\txi) & =\gamma_{21,\tilde\psi^\pm_j}(\lambda,\txi) e^{-\mu_j^\pm(\lambda,\txi)x_1}\tilde V_j^\pm(\lambda,\txi)(1
+ \cO(e^{-\theta|x_1|})), \\
\tilde \phi^\pm_j(x_1,\lambda,\txi) &= \gamma_{21,\tilde\phi^\pm_j}(\lambda,\txi)e^{-\mu_j^\pm(\lambda,\txi)x_1}\tilde V_j^\pm(\lambda,\txi)(1 +
\cO(e^{-\theta|x_1|}))
\end{aligned}
\end{equation}where $\mu_j^\pm$ are defined as in Lemma
\ref{lem:asymmodes}, and $\tilde V_j^\pm$ are dual eigenvectors of $\A_\pm$. Here, as in Lemma \ref{lem-estmodes}, the factors $$\gamma_{21,\tilde\psi^\pm_j},\gamma_{21,\tilde\phi^\pm_j}\sim 1$$ for fast and intermediate-slow modes and for hyperbolic super-slow modes, and $$\begin{aligned}\gamma_{21,\tilde\psi^\pm_j}&\sim 1+[\rho^{-1}|\I \lambda - \eta_j^\pm(\txi)|+\rho]^{t_{\tilde\psi^\pm_j}}\\\gamma_{21,\tilde\phi^\pm_j} &\sim 1+[\rho^{-1}|\I \lambda - \eta_j^\pm(\txi)|+\rho]^{t_{\tilde\phi^\pm_j}}\end{aligned}$$
for glancing super--slow modes, for some ${t_{\tilde\phi^\pm_j}},{t_{\tilde\psi^\pm_j}} <1$ depending on $s_j$.
\end{lemma}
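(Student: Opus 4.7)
The plan is to obtain \eqref{est-adjmodes} by inverting the forward mode matrix from Lemma~\ref{lem-estmodes} row by row, then invoking the conjugation lemma of \cite{MeZ1} on the adjoint variational ODE in the same way it was used for the forward modes. Concretely, set $M^\pm(x_1) := (\Psi^\pm,\Phi^\pm)(x_1)$ and, by Lemma~\ref{lem-estmodes}, for $|x_1|$ sufficiently large factor
$$M^\pm(x_1) \;=\; V^\pm(\lambda,\txi)\,\Gamma^\pm(\lambda,\txi)\,E^\pm(x_1,\lambda,\txi)\,\bigl(I + R^\pm(x_1,\lambda,\txi)\bigr),$$
where $V^\pm$ has columns $V_j^\pm$ (the right eigenvectors of $\A_\pm$), $E^\pm(x_1)=\mathrm{diag}(e^{\mu_j^\pm x_1})$, $\Gamma^\pm$ is block-diagonal with scalar entries $\gamma_{21,\cdot}\sim 1$ outside the glancing super-slow blocks and a Jordan-type block on each glancing block, and $R^\pm=\cO(e^{-\eta|x_1|})$ with $\eta>0$ the profile decay rate.

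Inverting this factorization gives $(M^\pm)^{-1}\Theta^{-1} = (I+R^\pm)^{-1}(E^\pm)^{-1}(\Gamma^\pm)^{-1}(V^\pm)^{-1}\Theta^{-1}$. Using the exponential convergence $\Theta(x_1)=\Theta_\pm + \cO(e^{-\eta|x_1|})$ from \eqref{layerdecay}, one may replace $\Theta^{-1}$ by $\Theta_\pm^{-1}$ modulo an $\cO(e^{-\eta|x_1|})$ error on the right. The rows of $(V^\pm)^{-1}\Theta_\pm^{-1}$ are then identified as the dual eigenvectors $\tilde V_j^\pm$ of $\A_\pm$ under the normalization $\tilde V_j^\pm\,\Theta_\pm\,V_k^\pm = \delta_{jk}$ implicit in \eqref{adjoint}. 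Reading off the $j$-th row of the product, and applying the conjugation/gap lemma of \cite{MeZ1} to the variable-coefficient adjoint ODE $\tilde w_{x_1} = -\tilde w\,\A(x_1)\,\Theta^{-1}(x_1)$ whose coefficients converge to their limits at rate $\eta$, yields
$$\tilde\psi_j^\pm(x_1,\lambda,\txi) \;=\; \gamma_{21,\tilde\psi_j^\pm}(\lambda,\txi)\,e^{-\mu_j^\pm(\lambda,\txi)x_1}\,\tilde V_j^\pm(\lambda,\txi)\,\bigl(1+\cO(e^{-\theta|x_1|})\bigr),$$
with $\gamma_{21,\tilde\psi_j^\pm}$ the $j$-th diagonal entry of $(\Gamma^\pm)^{-1}$; the slow/fast spectral separation of Lemma~\ref{lem:asymmodes} keeps $\theta\in(0,\eta]$ uniformly away from zero. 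The argument for $\tilde\phi_j^\pm$ is identical.

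The main obstacle is the two-sided estimate on $\gamma_{21,\tilde\psi_j^\pm}$ inside the glancing super-slow blocks, where $\gamma_{21,\psi_j^\pm}$ may blow up like $[\rho^{-1}|\Im\lambda - \eta_j^\pm(\txi)|+\rho]^{t_{\psi_j^\pm}}$ with $t_{\psi_j^\pm}<1$, so that a naive scalar reciprocal produces only a one-sided upper bound. To extract the two-sided form claimed in \eqref{est-adjmodes}, one must instead invert the full Jordan-type perturbation block explicitly, following the computations of \cite[Lemma 4.19]{Z3} and \cite[Lemma 5.22]{Z4}. Since, as stressed in the proof of Lemma~\ref{lem-estmodes}, the slow sector here is governed by the same purely hyperbolic matrix $-A_\pm^{-1}(\lambda + iA_\pm^{\txi})$ that appears in the hyperbolic-parabolic setting, those block inversion computations transfer verbatim and yield sharp exponents $t_{\tilde\psi_j^\pm},t_{\tilde\phi_j^\pm}<1$ depending on $s_j$, concluding the proof.
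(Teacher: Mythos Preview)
Your proposal is correct and follows the same route the paper intends: the paper's proof is the single line ``straightforward from the estimates of $\psi_j^\pm,\phi_j^\pm$ in \eqref{est-modes},'' which is precisely the matrix-inversion argument you carry out in detail. Your extra care with the glancing super-slow blocks (inverting the Jordan-type block rather than taking scalar reciprocals, and deferring to \cite[Lemma 4.19]{Z3}, \cite[Lemma 5.22]{Z4}) is exactly the content hidden behind the word ``straightforward,'' and your observation that the slow sector is governed by the same hyperbolic symbol $-A_\pm^{-1}(\lambda+iA_\pm^{\txi})$ as in the hyperbolic--parabolic case is the right justification for importing those computations verbatim.
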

\begin{proof} The proof is straightforward from the estimates of
$\psi^\pm_j,\phi_j^\pm$ in \eqref{est-modes}.
\end{proof}

\subsection{Solutions near $x_1 \sim 0$}

Our goal now is to analyze system \eqref{ODEsys} close to the
singularity $x_1=0$. To fix ideas, let us again stick to the case
$x_1>0$, the case $x_1<0$ being equivalent. We introduce a ``stretched''
variable $\xi_1$ as follows:
\begin{equation*}
    \xi_1 = \int_{1}^{x_1}\frac{dz}{a_p(z)},% + \xi(1),
\end{equation*}
so that $\xi_1(1) = 0$, and $\xi_1 \to +\infty$ as $x_1 \to 0^+$ (note that thanks to \eqref{non-deg}, $a_p(z) \sim z$ when $z$ is small). Under
this change of variables we get
\begin{equation*}
    u' = \frac{du}{dx_1} = \frac{1}{a_p(x_1)}\frac{du}{d\xi_1} =
    \frac{1}{a_p(x_1)}\dot{u},
\end{equation*}
and denoting $\;\dot{ }$ $= d/d\xi_1$.
%and
%\begin{equation*}
%    \xi \to +\infty\ \hbox{for}\ x\to 0^{+}.
%\end{equation*}
In the stretched variables, making some further changes of variables
if necessary, the system \eqref{specsys-v} becomes a
block-diagonalized system at leading order of the form
\begin{equation}
\label{eq:strechtedsyst} \dot{Z} = \begin{pmatrix} -\alpha & 0 \\ 0&
0 \end{pmatrix}Z + a_p(\xi_1) \B(\xi_1) Z,
\end{equation} where $\B(\xi_1)$ is some bounded matrix and
$\alpha$ is the $(p,p)$ entry of the matrix
$\lambda +i\tilde A^\txi+ \tilde L\tilde B+(T^{-1})_{x_1} A T + \tilde A_{x_1}$,
noting that due to the positive diffusion assumption (S2) on $LB$ and definitions of $\tilde L = T^{-1} L$ and $\tilde B = BT$, we have $$\real ~\alpha(\xi_1) \ge
\delta_0>0,$$ for some $\delta_0$ and any $\xi_1$ sufficiently large
or $x_1$ sufficiently near zero.

The blocks $-\alpha I$ and $0$ are clearly spectrally separated and
the error is of order $\cO(|a_p(\xi_1)|) \to 0$ as $\xi_1 \to +\infty$.
By the standard pointwise reduction lemma (see, for example, Proposition B.1, \cite{NPZ}), we can separate the flow into slow and
fast coordinates. Indeed, after proper transformations we separate
the flows on the reduced manifolds of form
\begin{align}
\dot{Z_1} &= - \alpha Z_1 + \cO(a_p) Z_1,\\
\dot{Z_2} &= \cO(a_p) Z_2.
\end{align}

Since $-\Re e\alpha \leq -\delta_0 < 0$ for $\rho \sim 0$ and $\xi
\geq 1/\epsilon$, with $\epsilon > 0$ sufficiently small, and since
$a_p(\xi_1) \to 0$ as $\xi_1 \to +\infty$, the $Z_1$ mode decay to zero
as $\xi_1 \to +\infty$, in view of
\[
e^{-\int_0^{\xi_1} \alpha(z) \, dz} \lesssim e^{-(\real (\lambda+i\tilde A^\txi) + \half
\delta_0) \xi_1}.
\]

These fast decaying modes correspond to fast decaying to zero
solutions when $x_1 \to 0^+$ in the original $u$-variable. The $Z_2$
modes comprise slow dynamics of the flow as $x_1 \to 0^+$. We summarize these into the following proposition. 

\begin{proposition}
\label{prop:smallep} \cite[Proposition 2.4]{NPZ} There exists $0 < \epsilon_0 \ll 1$
sufficiently small, such that, in the small frequency regime
$\lambda \sim 0$, the solutions to the spectral system
\eqref{ODEsys} in $(-\epsilon_0,0) \cup (0,\epsilon_0)$ are
spanned by fast modes
\begin{equation}
\label{wk} w_{k_p}^\pm(x_1,\lambda) = \begin{pmatrix} \tilde u_{k_p}^\pm
\\ \tilde q_{k_p}^\pm \\ \tilde p_{k_p}^\pm \end{pmatrix} \qquad \pm\epsilon_0 \gtrless x_1
\gtrless 0,
\end{equation}
decaying to zero as $x_1 \to 0^\pm$, and slowly varying modes
\begin{equation}
z_j^\pm(x_1,\lambda) = \begin{pmatrix} \tilde u_j^\pm
\\ \tilde q_j^\pm \\ \tilde p_j^\pm \end{pmatrix},  \qquad \pm\epsilon_0 \gtrless x_1\gtrless 0,\label{z13}
\end{equation}
with bounded limits as $x_1 \to 0^\pm$.

Moreover, the fast modes \eqref{wk} decay as
\begin{equation}
\label{decayu2} \tilde u_{{k_p}p}^\pm \sim |x_1|^{\alpha_0} \to 0
\end{equation} and
\begin{equation}
\label{decaypq2}
\begin{pmatrix} \tilde u_{{k_p}j}^\pm \\\tilde q_{k_p}^\pm \\ \tilde p_{k_p}^\pm \end{pmatrix} \sim
\cO(|x_1|^{\alpha_0}a_p(x_1)) \to 0, \qquad j\not=p,
\end{equation}
as $x_1 \to 0^\pm$; here, $\alpha_0$ is some positive constant, $k_p = n-p+2$, and $$u_{k_p}=(u_{{k_p}1},...,u_{{k_p}p},...,u_{{k_p}n})^\top.$$
\end{proposition}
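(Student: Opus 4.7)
The plan is to follow the framework already set up in the preceding paragraphs: change to the stretched variable $\xi_1=\int_1^{x_1} dz/a_p(z)$, apply the pointwise reduction lemma to the block-diagonalized system \eqref{eq:strechtedsyst}, solve the decoupled fast/slow scalar equations in $\xi_1$, and finally translate the asymptotics back to the original $(u,q^1,p^1)$ variables, being careful about the singular relation between $W$ and its derivatives that comes from the degeneracy of $\Theta(x_1)$ at $x_1=0$.

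First I would pin down the coordinate change. Assumption \textbf{(S3)} gives $a_p(x_1)=a_p'(0)\,x_1+\mathcal{O}(x_1^2)$ with $a_p'(0)\neq 0$, so for $x_1$ near $0^+$ one has $\xi_1=(1/a_p'(0))\log x_1+\mathcal{O}(1)$, hence $x_1\sim e^{a_p'(0)\xi_1}$ and $a_p(\xi_1)\sim e^{a_p'(0)\xi_1}$. In particular $a_p(\xi_1)\to 0$ \emph{exponentially} as $\xi_1\to+\infty$, so $\int_{1/\epsilon_0}^\infty|a_p(\xi_1)|\,d\xi_1<\infty$. Then I would invoke the pointwise reduction lemma (Proposition B.1 of \cite{NPZ}), which is applicable because in \eqref{eq:strechtedsyst} the blocks $-\alpha I$ and $0$ are strictly spectrally separated (using $\Re\alpha\ge\delta_0>0$ guaranteed by the positive diffusion condition \textbf{(S2)}) and the off-diagonal perturbation $a_p(\xi_1)\B(\xi_1)$ is integrable in $\xi_1$. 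This produces genuine invariant fast and slow subbundles $Z_1$, $Z_2$ satisfying
\begin{equation*}
\dot Z_1=-\alpha(\xi_1)Z_1+\mathcal{O}(a_p(\xi_1))Z_1,\qquad \dot Z_2=\mathcal{O}(a_p(\xi_1))Z_2.
\end{equation*}

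Next I would solve the two scalar-block ODEs. For the slow block, integrability of $a_p(\xi_1)$ in $\xi_1$ together with Gronwall gives that any solution $Z_2(\xi_1)$ has a well-defined finite limit as $\xi_1\to+\infty$, hence bounded limits as $x_1\to 0^+$, producing the modes $z_j^\pm$ in \eqref{z13}. For the fast block, solving gives $Z_1(\xi_1)\sim \exp\bigl(-\int_0^{\xi_1}\alpha(z)\,dz\bigr)$ times a bounded factor; since $\Re\alpha$ tends to a limit $\alpha_\infty$ with positive real part as $\xi_1\to\infty$, the exponent behaves like $-\alpha_\infty\xi_1$, which in $x_1$-coordinates becomes $|x_1|^{\alpha_0}$ with $\alpha_0:=\Re\alpha_\infty/|a_p'(0)|>0$. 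This produces the fast mode $w_{k_p}^\pm$ in \eqref{wk}, decaying to zero as $x_1\to 0^\pm$.

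The main obstacle, and the reason the decay rates \eqref{decayu2}--\eqref{decaypq2} are not uniform across components, is translating $Z_1$ back to $W=(v,q^1,p^1)$ through the degenerate matrix $\Theta$. The variable $Z_1$ corresponds, at leading order, to the $p$-th component of $v$ along the diagonalization \eqref{a-diag}, which in the original $u$ coordinates gives $u_{k_p,p}\sim|x_1|^{\alpha_0}$, proving \eqref{decayu2}. The other components of $v$, as well as $q^1$ and $p^1$, only feel the fast mode through the coupling matrix $a_p(\xi_1)\B(\xi_1)$ from the reduction (equivalently, through the off-diagonal part of $\A(x_1,\lambda,\txi)$ restricted to the fast subspace), producing the additional factor $a_p(x_1)$ and hence the bound $\mathcal{O}(|x_1|^{\alpha_0}a_p(x_1))$ in \eqref{decaypq2}. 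Once one verifies that the $n+2$ modes $\{w_{k_p}^\pm,z_j^\pm\}$ so constructed are linearly independent near $x_1=0^\pm$ (which follows from the linear independence of the reduction bundles $Z_1$, $Z_2$), they span the solution space of \eqref{ODEsys} in $(-\epsilon_0,0)\cup(0,\epsilon_0)$, completing the proof.
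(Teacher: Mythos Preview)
Your proposal is correct and follows essentially the same approach as the paper: the paper's argument is precisely the discussion immediately preceding the proposition (stretched variable $\xi_1$, block-diagonalization to \eqref{eq:strechtedsyst}, pointwise reduction into decoupled fast/slow flows), together with a citation to \cite{NPZ} for the detailed component-wise asymptotics, and you have correctly fleshed out those details. Your identification of $\alpha_0=\Re\alpha_\infty/|a_p'(0)|$ and your explanation of the extra $a_p(x_1)$ factor in \eqref{decaypq2} via the $\mathcal{O}(a_p)$ coupling on the slow block are exactly the mechanisms at work.
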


\subsection{Two Evans functions}
Having constructed bases of the solutions in regions $x\le -x_0$, $x\ge x_0$, and $|x|\le |x_0|$, we can extend the modes $\phi_j^\pm$ in $\Phi^\pm$ to regions of negative/positive values of $y$ by expressing them as linear combinations of solution bases constructed in Lemmas \ref{lem-estmodes} and \ref{prop:smallep} in these respective regions. Thus, 
we are able to define the following two variable-dependent Evans functions
\begin{equation}
\label{rEvans} D_+(y_1,\lambda,\txi) := \det (\Phi^+\;W_{k_p}^{-} \;
\Phi^-)(y_1,\lambda,\txi), \qquad \mbox{for } y_1 > 0,
\end{equation} 
and 
\begin{equation}
\label{rEvans-} D_-(y_1,\lambda,\txi) := \det (\Phi^+\;W_{k_p}^{+} \;
\Phi^-)(y_1,\lambda,\txi), \qquad \mbox{for } y_1 < 0,
\end{equation} 
where $\Phi^\pm$ are defined as in \eqref{phi+}, \eqref{phi-}, and
$W_{k_p}^\pm = (u_{k_p}^\pm,q_{k_p}^\pm,p_{k_p}^\pm)^\top$ as in
\eqref{wk}, and ${k_p} = n-p+2$. 

\myskip
We observe the following simple properties of $D_\pm$.
\begin{lemma}\label{lem-Evansfns} For $\lambda$ sufficiently small, we have
\begin{equation}\label{Evansfns1}\begin{aligned}
D_\pm(y_1,\lambda,\txi)&=\gamma_\pm(y_1)(\det A_1)^{-1}
\Delta(\lambda,\txi) +
\cO(\rho^2),
\end{aligned}\end{equation}
where $\Delta (\lambda,\txi)$ is the Lopatinski determinant, defined as
\begin{equation}\label{Lop-det}\begin{aligned}\Delta (\lambda,\txi)&:= \det\begin{pmatrix}r_2^+&\cdots&r^+_{k_p-1}&r_{k_p+1}^-&\cdots&r_{n+1}^-& \lambda[u]+i[f^\txi(u)]
\end{pmatrix}
\\
\gamma_\pm(y_1)&:=\det\begin{pmatrix}q_1^+&q_{k_p}^\mp\\p_1^+&p_{k_p}^\mp\end{pmatrix}_{|_{\rho=0}}
\end{aligned}\end{equation}
with $[u] = u_+ - u_-$ and $r_j^\pm$ constant eigenvectors of $(A^1_\pm)^{-1}(LB)_\pm$, spanning the stable/unstable subspaces at $\pm\infty$, respectively.
\end{lemma}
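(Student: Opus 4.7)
The plan is to expand $D_\pm(y_1,\lambda,\txi)$ in powers of $\rho$ using the mode descriptions from Lemmas \ref{lem:asymmodes}, \ref{lem-estmodes} and Proposition \ref{prop:smallep}, and then match the leading $O(\rho)$ term against the stated right-hand side. The computation is the multi-dimensional radiative analogue of the one-dimensional expansion carried out in \cite{LMNPZ,NPZ}, with the transverse frequency $\txi$ entering as a parameter; compared with the hyperbolic--parabolic treatment of \cite{Z3,Z4}, the only structural novelty is that the singular fast-mode column $W_{k_p}^\pm$ replaces the usual constant Wronskian factor with the position-dependent prefactor $\gamma_\pm(y_1)$.

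First I would compute the slow-mode columns of $\Phi^\pm$ at $\rho=0$. By Lemma \ref{lem:asymmodes} and the conjugation estimate \eqref{est-modes}, $\phi_j^\pm(y_1,\lambda,\txi) = V_{j,0}^\pm + O(\rho)$ at any fixed $y_1$ for the slow decaying modes, since $\mu_j^\pm = O(\rho)$ and $V_j^\pm = V_{j,0}^\pm + O(\rho)$. A direct kernel computation on $\A_\pm|_{\rho=0}$ using the block form \eqref{a-pm} shows that $V_{j,0}^\pm$ has the shape $(v_{j,0}^\pm, 0, \tilde B_\pm v_{j,0}^\pm)^\top$. Applying first-order perturbation theory to the dominant $O(\rho)$ piece $-\tilde A_\pm^{-1}(\lambda + i\tilde A_\pm^\txi)$ then diagonalizes the kernel directions, and pulling back to the $u$-variable through the diagonalizer $T_\pm$ identifies the first block of $V_{j,0}^\pm$ with the constant eigenvectors $r_j^\pm$ of $(A_\pm^1)^{-1}(LB)_\pm$ that span the stable/unstable subspaces appearing in \eqref{Lop-det}.

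Next I would isolate the singular fast-mode column $W_{k_p}^\mp$ and the fast decaying modes of $\Phi^\pm$ via a cofactor expansion on the last two rows of the full $(n+2)\times(n+2)$ determinant. Proposition \ref{prop:smallep} gives that at $\rho=0$ the $u$-components of $W_{k_p}^\mp$ are $O(|y_1|^{\alpha_0})$ while its $(q^1,p^1)$-components are $O(|y_1|^{\alpha_0} a_p(y_1))$; combined with the fast $(q^1,p^1)$-block of $\phi_1^+$, this cofactor expansion extracts the $2\times 2$ sub-determinant $\gamma_\pm(y_1)$ defined in \eqref{Lop-det}. Row-reducing the remaining $n \times n$ block through $\Theta^{-1} = \mathrm{diag}(A_1^{-1}, I_2)$ to normalize the first $n$ rows produces the $(\det A_1)^{-1}$ prefactor and leaves a determinant built from the $r_j^\pm$ columns.

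Finally, since $D_\pm(y_1,0,0)=0$ (the translation mode $U'(y_1)$ is a homogeneous solution at $\rho=0$ that decays at both $\pm\infty$, forcing a linear dependence between $\Phi^+$ and $\Phi^-$ through it), the Taylor expansion in $(\lambda,\txi)$ of the slow eigenvectors contributes a column proportional to $(A_\pm^1)^{-1}(\lambda + iA_\pm^\txi)r_j^\pm$; combining the $+$ and $-$ contributions at the matching point $y_1$ and using the Rankine--Hugoniot identity $[f_1]=0$ (which holds since $s=0$) assembles the jump vector $\lambda[u] + i[f^\txi(u)]$ as the last column of $\Delta$, in the same pattern as \cite[Proposition 5.8]{Z4}. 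The $O(\rho^2)$ remainder absorbs the second-order corrections to $(\mu_j^\pm, V_j^\pm)$ together with the $O(e^{-\eta|y_1|})$ conjugation tails supplied by Lemma \ref{lem-estmodes}. The chief technical obstacle lies in this last assembly: one must track the glancing diagonalization factors $\gamma_{21,\phi_j^\pm}$ of Lemma \ref{lem-estmodes} carefully so that they enter multiplicatively without spoiling the leading cancellations, and verify that both Evans functions $D_\pm$ yield the \emph{same} Lopatinski factor $\Delta$ up to the side-dependent prefactor $\gamma_\pm(y_1)$, despite the apparent asymmetry produced by the choices $W_{k_p}^\mp$ on opposite sides of the singularity.
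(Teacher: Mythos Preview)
Your proposal is correct and follows precisely the approach the paper intends: the paper's own proof simply defers to the computations in \cite[pp.~59--61]{Z4} and \cite[Lemma 2.5]{NPZ}, and what you have sketched---expanding the slow-mode columns to leading order in $\rho$, extracting the $2\times 2$ fast block $\gamma_\pm(y_1)$ by cofactor expansion, and assembling the Lopatinski column $\lambda[u]+i[f^\txi(u)]$ via the translation mode and Rankine--Hugoniot---is exactly the content of those references adapted to the present singular-ODE setting. Your concern about the glancing factors $\gamma_{21,\phi_j^\pm}$ is mildly overstated, since they are $\sim 1$ to leading order and enter only at the $\cO(\rho^2)$ level, but otherwise the outline is complete.
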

\begin{proof} The computation follows straightforwardly from lines of the computations in \cite[pp. 59--61]{Z4}, and those in \cite[Lemma 2.5]{NPZ}.
\end{proof}

\begin{lemma}
\label{lemma-mD}
Defining the Evans functions \begin{equation}\label{Evansfns-def}
D_\pm(\lambda,\txi): = D_\pm(\pm 1, \lambda,\txi),\end{equation} we then have
\begin{equation}\label{E-consistent}
D_+(\lambda,\txi) = m D_-(\lambda,\txi) +\cO(\rho^2)\end{equation} for some nonzero factor $m$.
\end{lemma}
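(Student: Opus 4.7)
\textbf{Proof plan for Lemma \ref{lemma-mD}.} The plan is to deduce the comparison directly from the expansion already recorded in Lemma \ref{lem-Evansfns}. Evaluating that lemma at the two distinguished base points $y_1 = +1$ and $y_1 = -1$ gives
\begin{equation*}
D_+(\lambda,\txi) = \gamma_+(1)\,(\det A_1(1))^{-1}\,\Delta(\lambda,\txi) + \cO(\rho^2), \qquad
D_-(\lambda,\txi) = \gamma_-(-1)\,(\det A_1(-1))^{-1}\,\Delta(\lambda,\txi) + \cO(\rho^2).
\end{equation*}
Both $D_\pm$ therefore carry the same Lopatinski factor $\Delta(\lambda,\txi)$ at leading order, so the identity $D_+ = m D_- + \cO(\rho^2)$ should follow immediately with
\begin{equation*}
m \;:=\; \frac{\gamma_+(1)\,\det A_1(-1)}{\gamma_-(-1)\,\det A_1(1)},
\end{equation*}
obtained by solving the second displayed equation for $\Delta$ modulo $\cO(\rho^2)$ and substituting into the first.

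The only substantive step is to verify that $m$ is indeed nonzero. First, $\det A_1(\pm 1)\ne 0$ by strict hyperbolicity (assumption (S1)) together with (S3), which guarantees that $x_1=0$ is the unique vanishing point of $a_p$, so in particular $x_1=\pm 1$ are away from the singularity. Second, $\gamma_\pm(\pm 1)$ is the $\rho=0$ value of the $2\times 2$ determinant involving the $(q^1,p^1)$-components of the fast modes $w_1^+$ and $w_{k_p}^\mp$ constructed in Proposition \ref{prop:smallep}. These two fast modes are genuinely linearly independent in the $(q,p)$-plane at $\rho=0$, since they correspond to strictly opposite sign fast eigenvalues (one decaying and one growing) of the elliptic block in \eqref{asympsyst}; hence the determinant is nonzero at both sides.

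With these two facts, the leading constants $c_\pm := \gamma_\pm(\pm 1)(\det A_1(\pm 1))^{-1}$ are both nonzero, and a direct substitution
\begin{equation*}
D_+(\lambda,\txi) = c_+ \Delta(\lambda,\txi) + \cO(\rho^2) = \frac{c_+}{c_-}\bigl(D_-(\lambda,\txi)-\cO(\rho^2)\bigr)+\cO(\rho^2) = m\,D_-(\lambda,\txi) + \cO(\rho^2)
\end{equation*}
closes the argument. The one point I expect to need care is the nondegeneracy of $\gamma_\pm(\pm 1)$; this relies on the precise form of the fast-mode decay given in \eqref{decayu2}--\eqref{decaypq2} of Proposition \ref{prop:smallep} to ensure that the $(q,p)$-components of $w_1^\pm$ and $w_{k_p}^\pm$ are indeed independent at $\rho=0$, a fact that in the parallel one-dimensional analysis of \cite{NPZ} is verified by a short explicit computation and should carry over verbatim here.
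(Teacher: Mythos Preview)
Your argument is correct and, in fact, a bit more direct than the paper's own. Both routes rest on the leading-order expansion of Lemma~\ref{lem-Evansfns}, which already displays $D_\pm$ as a nonzero scalar times the common Lopatinski determinant $\Delta(\lambda,\txi)$ plus $\cO(\rho^2)$; from there your substitution $D_+ = (c_+/c_-)D_- + \cO(\rho^2)$ is immediate. The paper instead inserts an intermediate ODE-matching step, using Proposition~\ref{prop:smallep} to show that the singular fast modes $w_{k_p}^+$ and $w_{k_p}^-$, once extended across $x_1=0$, agree up to a nonzero constant $m_{k_p}$; this relation is then fed into \eqref{Evansfns1}. The payoff of the paper's detour is that $m\ne 0$ follows from the nonvanishing of the single ODE matching constant $m_{k_p}$, rather than from a separate verification that both $\gamma_+(+1)$ and $\gamma_-(-1)$ are nonzero. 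Your plan instead isolates the nonvanishing of each $\gamma_\pm(\pm1)$ as the sole substantive check; you are right that this is where the care is needed, and the one-dimensional computation in \cite{NPZ} does transfer here (the relevant $(q^1,p^1)$ block is unchanged by the presence of the transverse parameter $\txi$ at $\rho=0$). One notational caution: the first column of $\gamma_\pm$ comes from the fast-decaying mode $\phi_1^+$ at $+\infty$, not from a mode $w_1^+$ near the singular point as your phrasing suggests; your heuristic about ``opposite-sign fast eigenvalues'' is correct once interpreted as the contrast between the $+\infty$ fast stable mode and the $x_1\to 0$ fast mode.
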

\begin{proof}
Proposition \ref{prop:smallep} gives
\begin{equation}
w_{k_p}^\pm(x_1) = \begin{pmatrix} \tilde u_{k_p}^\pm
\\ \tilde q_{k_p}^\pm \\ \tilde p_{k_p}^\pm \end{pmatrix} = \cO(|x|^{\alpha_0}),
\end{equation}as $x_1\to 0$, where $\alpha_0$ is defined as in Proposition \ref{prop:smallep}. Thus, there are positive constants $\epsilon_1,\epsilon_2$
near zero such that
\begin{equation*}
w_{k_p}^+(-\epsilon_1) =
w_{k_p}^-(+\epsilon_2).
\end{equation*}
Thus, this together with the fact that $w_{k_p}^\pm$ are solutions of the ODE \eqref{eqW} yields
\begin{equation*}w_{k_p}^+(-1) = m_{k_p}
w_{k_p}^-(+1)
\end{equation*}
for some nonzero constant $m_{k_p}$.
Putting these estimates into \eqref{Evansfns1} and using continuity
of $D_\pm$ in $(\lambda,\txi)$ near zero, we easily obtain the conclusion.
\end{proof}

\section{Resolvent kernel bounds in low--frequency regions} In this section, we shall
derive pointwise bounds on the resolvent kernel $\cG_{\lambda,\txi}(x_1,y_1)$
in low-frequency regimes, that is, $|(\lambda,\txi)| \to 0$, following closely the analysis developed for the one--dimensional stability in \cite{NPZ}. We recall that the Green kernel $\cG_{\lambda,\txi}(x_1,y_1)$ of the ODE system \eqref{eqW} solves
\begin{equation}\label{greeneqs}\partial_{x_1} (\Theta (x_1) \cG_{\lambda,\txi}(x_1,y_1)) - \A(x_1,\lambda,\txi)
\cG_{\lambda,\txi}(x_1,y_1) = \delta_{y_1}(x_1),
\end{equation}
in the distributional sense, where $\delta_{y_1}(\cdot)$ denotes the standard Dirac function with mass at $x_1=y_1$. The kernel $\cG_{\lambda,\txi}(.,y_1)$ then satisfies the jump conditions at $x_1=y_1$: \begin{equation}
\label{eq:conduy} [\cG_{\lambda,\txi}(.,y_1)] = \begin{pmatrix} A_1(y_1)^{-1}
&0&0\\0& 1&0\\0&0&1\end{pmatrix}.
\end{equation}
For
definiteness, throughout this section, we consider only the case
$y_1<0$. The case $y_1>0$ is completely analogous by symmetry.

By solving the jump conditions at $x_1=y_1$, one observes that the Green kernel $\cG_{\lambda,\txi}(x_1,y_1)$ can be expressed in  terms of decaying solutions at $\pm \infty$ as follows
\begin{equation}
\label{eq:formcolu} \cG_{\lambda,\txi}(x_1,y_1) = \begin{cases}
\Phi^+(x_1,\lambda,\txi)C^+(y_1,\lambda,\txi) + W_{k_p}^+(x_1,\lambda,\txi)C_{k_p}^+(y_1,\lambda,\txi), & x_1>y_1,\\
- \Phi^-(x_1,\lambda,\txi)C^-(y_1,\lambda,\txi), & x_1<y_1.\end{cases}
\end{equation}
where $C_j^\pm$ are row vectors.
We compute
the coefficients $C_j^\pm$ by means of the transmission conditions
\eqref{eq:conduy} at $y_1$. Therefore, solving by Cramer's rule the
system
\begin{equation}\label{eq:systC}
\begin{pmatrix}
\Phi^+ & W_{k_p}^+ & \Phi^-
\end{pmatrix}
\begin{pmatrix}C^+ \\ C_{k_p}^+ \\ C^-
\end{pmatrix}_{\displaystyle{|(y_1,\lambda,\txi)}} =
\begin{pmatrix} A_1(y_1)^{-1}
&0&0\\0& 1&0\\0&0&1\end{pmatrix},
\end{equation}
we readily obtain,
\begin{align}\label{C-form}
\begin{pmatrix}C^+ \\ C_{k_p}^+ \\ C^-
\end{pmatrix}(y_1,\lambda,\txi) &= D_-(y_1,\lambda,\txi)^{-1}\begin{pmatrix} \Phi^+
& W_{k_p}^+ & \Phi^-
\end{pmatrix}^{adj}\begin{pmatrix}A_1(y_1)^{-1}
&0&0\\0& 1&0\\0&0&1\end{pmatrix}
\end{align}
where $M^{adj}$ denotes the adjugate matrix of a matrix
$M$. Note that
\begin{align}
C_{jp}^\pm(y_1,\lambda,\txi) &= a_p(y_1)^{-1}D_-(y_1,\lambda,\txi)^{-1}\begin{pmatrix} \Phi^+
& W_{k_p}^+ & \Phi^-
\end{pmatrix}^{pj}(y_1,\lambda,\txi), \label{Cjp}
\\
C_{jl}^\pm(y_1,\lambda,\txi) &= \sum_k D_-(y_1,\lambda,\txi)^{-1}\begin{pmatrix} \Phi^+
& W_{k_p}^+ & \Phi^-
\end{pmatrix}^{kj}(y_1,\lambda,\txi)(A_1(y_1)^{-1})_{kl},\qquad l\not = p,\label{Cjl}
\end{align}
where $()^{ij}$ is the determinant of the $(i,j)$ minor, and
$(A_1(y_1)^{-1})_{kl}$,
$l\not=p$, are bounded in $y_1$. Thus, $C_{jp}^\pm$ are only coefficients that are
possibly singular as $y_1$ near zero because of singularity in the
$p^{th}$ column of the jump-condition matrix \eqref{eq:conduy}.

\begin{lemma}\label{lem-estCnear0} Define $\rho:=|(\lambda,\txi)|$. For $\rho$ sufficiently small and for $y_1$ near zero, we have
\begin{equation}\label{est-C13}\begin{aligned}
C_j^\pm(y_1,\lambda,\txi) &=\left\{\begin{array}{llc}\cO(\rho^{-1}),& j = 1,n+2,\\
\cO(a_p(y_1)^{-1}|y_1|^{-\alpha_0}), & j = k_p,\\
\cO(1), & \mbox{otherwise,}\end{array}\right.
\end{aligned}\end{equation}
where $k_p=n-p+2$, $\alpha_0$ is defined as in Proposition \ref{prop:smallep} and $\cO(1)$ is a uniformly
bounded function, probably depending on $y_1,\lambda,\txi$.
\end{lemma}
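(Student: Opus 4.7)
The plan is to exploit the explicit Cramer-rule formulas \eqref{Cjp}--\eqref{Cjl} and estimate the numerators and denominator separately, using the Evans-function expansion in Lemma \ref{lem-Evansfns}, the mode asymptotics in Lemma \ref{lem-estmodes}, and the behavior near the singular point given by Proposition \ref{prop:smallep}. First I would write $D_-(y_1,\lambda,\txi)=\gamma_-(y_1)(\det A_1(y_1))^{-1}\Delta(\lambda,\txi)+\cO(\rho^2)$ and observe that by the strong spectral (Evans) stability assumption (D) the Lopatinski determinant vanishes to precisely first order, so $\Delta(\lambda,\txi)\sim \rho$; together with $\det A_1(y_1)\sim a_p(y_1)\cdot c(y_1)$ (with $c$ bounded away from zero near $y_1=0$), this gives a sharp scaling of the denominator $D_-(y_1,\lambda,\txi)$ in $\rho$ and $a_p(y_1)$.

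Next I would organise the numerators by case. For $l\neq p$ (equation \eqref{Cjl}) the factor $a_p(y_1)^{-1}$ is absent, so the entire estimate reduces to comparing an $(n+1)$-minor of $(\Phi^+\;W_{k_p}^+\;\Phi^-)$ at $y_1$ with $D_-$. For $l=p$ (equation \eqref{Cjp}) the singular factor $a_p(y_1)^{-1}$ appears explicitly, and the $(p,j)$-minor must be tracked carefully. In all cases I would expand the minors in the same block form used to prove Lemma \ref{lem-Evansfns}: the slow columns carry $\rho$-factors while the fast columns are $\cO(1)$, so Cramer-style cancellations with the $\Delta\sim \rho$ in the denominator yield $\cO(1)$ for slow-mode coefficients and $\cO(\rho^{-1})$ for the two fast-mode coefficients $j=1,n+2$.

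The only delicate case is $j=k_p$. Here the relevant minor excludes the $W_{k_p}^+$ column, so it remains of order $\gamma_-(y_1)$ (i.e., it cancels exactly the same nonzero structural factor appearing in the leading term of $D_-$), and what remains visible in the quotient is precisely the singular weight coming from the $p$-th column and from the slow-decay rate of $W_{k_p}^+$. More concretely, using Proposition \ref{prop:smallep}, $\tilde u_{k_p\,p}^\pm\sim |y_1|^{\alpha_0}$ while $\tilde u_{k_p\,j}^\pm,\tilde q_{k_p}^\pm,\tilde p_{k_p}^\pm\sim |y_1|^{\alpha_0}a_p(y_1)$ (for $j\neq p$), and after dividing the $(p,k_p)$-minor by $D_-$ the $\rho$ cancels while the residual weight leaves the stated $\cO(a_p(y_1)^{-1}|y_1|^{-\alpha_0})$ bound. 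For $j\notin\{1,n+2,k_p\}$, the slow modes' contributions cancel the $\rho$ and $\gamma_-$ factors exactly, producing $\cO(1)$.

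The main obstacle will be book-keeping at the singular point: tracking how the $|y_1|^{\alpha_0}$ and $a_p(y_1)$ decays of the individual components of $W_{k_p}^+$ propagate through the $(p,k_p)$-minor and interact with the $(\det A_1)^{-1}$ and $\gamma_-(y_1)$ factors in the leading term of $D_-$. This is essentially the one-dimensional computation carried out in \cite[Lemma 2.5 and Section 3]{NPZ}, adapted here by treating $\txi$ as a parameter (noting from Lemma \ref{lem:asymmodes} and Lemma \ref{lem-estmodes} that the presence of $\txi$ only contributes smooth $\cO(1)$ or $\cO(\rho)$ corrections away from glancing frequencies, and the glancing diagonalization errors $\gamma_{21,\phi_j^\pm}$ are absorbed in the $\cO(1)$ bound since the powers $t_{\phi^\pm_j}<1$). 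The remaining estimates then follow verbatim from the one-dimensional template.
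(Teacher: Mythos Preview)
Your overall strategy is the paper's: apply Cramer's rule via \eqref{Cjp}--\eqref{Cjl} and estimate each minor against $D_-$ using Lemma~\ref{lem-Evansfns} and Proposition~\ref{prop:smallep}. But the mechanism you cite for the $\cO(\rho)$ factor in the numerators is wrong. It is not that ``slow columns carry $\rho$-factors'': the slow modes $\phi_j^\pm$ and their limiting vectors $r_j^\pm$ in $\Delta$ are all $\cO(1)$. The single $\rho$ in $\Delta$ sits in the jump column $\lambda[u]+i[f^\txi(u)]$, which arises because the two \emph{fast} decaying modes coincide at $\rho=0$: $\phi_1^+\equiv\phi_{n+2}^-=\bar W'$. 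The paper invokes this coincidence explicitly at every step. Any minor that still contains both columns $\phi_1^+$ and $\phi_{n+2}^-$ is therefore $\cO(\rho)$; deleting either one leaves an $\cO(1)$ minor. This is precisely what separates $j=1,n+2$ (a fast column is removed, numerator is $\cO(1)$, hence $C_j=\cO(\rho^{-1})$) from the slow indices (both fast columns survive, numerator is $\cO(\rho|y_1|^{\alpha_0})$ matching the denominator, hence $C_j=\cO(1)$). With your stated mechanism you could not explain why only one power of $\rho$ appears in $D_-$ despite there being $n-1$ slow columns.

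Your $j=k_p$ argument also needs a correction: the $(p,k_p)$-minor \emph{excludes} $W_{k_p}^+$, so it cannot ``remain of order $\gamma_-(y_1)$,'' since $\gamma_-$ is built from the $(q_{k_p}^+,p_{k_p}^+)$ entries of that very column. The point is the opposite: the $(p,k_p)$-minor has bounded $y_1$-dependence and is $\cO(\rho)$ (both fast columns remain), while it is the \emph{denominator} that carries the singular weight via $|D_-(y_1,\lambda,\txi)|\approx|y_1|^{\alpha_0}|D_-(\lambda,\txi)|$ from the decay rates in Proposition~\ref{prop:smallep}; combined with the explicit $a_p(y_1)^{-1}$ in \eqref{Cjp} this gives the claimed bound. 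Once you replace ``slow columns carry $\rho$'' by the fast-mode coincidence $\phi_{n+2}^-\equiv\phi_1^+$ at $\rho=0$, the rest of your bookkeeping goes through as written.
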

\begin{proof} %It suffices to estimate $C_{jp}^\pm$ when the singularity plays a role.
%CHANGED: added new est
We shall first estimate $C_{n+2,p}^-(y_1,\lambda,\txi)$. Observe that
$$\begin{pmatrix} \Phi^+
& W_{k_p}^+ & \Phi^-
\end{pmatrix}^{p,n+2}(y_1,\lambda,\txi) = \begin{pmatrix} \Phi^+
& W_{k_p}^+ & \Phi^-
\end{pmatrix}^{p,n+2}(y,0) + \cO(\rho)$$
where by the same way as done in Lemma \ref{lem-Evansfns} we obtain an estimate
$$\begin{pmatrix} \Phi^+
& W_{k_p}^+ & \Phi^-
\end{pmatrix}^{p,n+2}(y_1,0) =a_p(\det A_1)^{-1}\gamma_-(y_1)
\Delta^{p,n+2}(\lambda,\txi),
$$ where $\gamma_-(y_1)$ and $\Delta(\lambda,\txi)$ are defined as in \eqref{Lop-det}, and $\Delta^{p,n+2}(\lambda,\txi)$ denotes the minor determinant.
Thus, recalling
\eqref{Evansfns1}
and \eqref{Cjp}, we can estimate
$C_{n+2,p}^-(y_1,\lambda,\txi)$ as
\begin{align*}
C_{n+2,p}^-(y_1,\lambda,\txi) &= a_p(y_1)^{-1}D_-(y_1,\lambda,\txi)^{-1}\begin{pmatrix} \Phi^+
& W_{k_p}^+ & \Phi^-
\end{pmatrix}^{p,n+2}(y_1,\lambda,\txi)\\
&= - \frac1{\Delta(\lambda,\txi)}\Delta^{p,n+2} + \cO(1),
\end{align*} where $\cO(1)$ is uniformly bounded since
normal modes $\phi^\pm_j$ are all bounded
as $y_1$ is near zero. Similar computations can be done for $C_{n+2,l}^-$. Thus,
we obtain the bound for $C_{n+2}^-$ as
claimed, by our strong Evans function assumption (D).
 The bound for $C_{1}^+$ follows similarly, noting that
$\phi_{n+2}^- \equiv \phi_1^+$ at $\rho=0$.

For the estimate on $C_{k_p}^+$, we observe that by view of the definition \eqref{Evansfns1} of $D_-(y_1,\lambda,\txi)$ and
the estimate $W_{k_p}^+(y_1) \thickapprox (0,\cdots,|y_1|^\nu,\cdots,0)^t$ in Proposition \ref{prop:smallep},
\begin{equation}\label{D-est}|D_-(y_1,\lambda,\txi)| \thickapprox |y_1|^{\alpha_0}|D_-(\lambda,\txi)|.\end{equation} This together with the fact that $\phi_{n+2}^- \equiv \phi_1^+$ at
$\rho=0$ yields the estimate for $C_{k_p}^+$ as claimed.

Finally, we derive estimates for $C_j^+$ (resp. $C_j^-$) for $1<j<k_p$ (resp. $k_p<j<n+2$). By again applying the estimate \eqref{wk} on $W_{k_p}$ and using the fact that $\phi_{n+2}^- \equiv \phi_1^+$ at
$\rho=0$, we obtain $$\begin{pmatrix} \Phi^+
& W_{k_p}^+ & \Phi^-
\end{pmatrix}^{pj} = \cO(\rho|y_1|^{\alpha_0}a_p(y_1))$$ and for $k\not=p$,
$$\begin{pmatrix} \Phi^+
& W_{k_p}^+ & \Phi^-
\end{pmatrix}^{kj} = \cO(\rho|y_1|^{\alpha_0})$$
These estimates together with \eqref{D-est} and \eqref{Cjl},\eqref{Cjp} yield estimates for $C_j^\pm$ as claimed.
\end{proof}

\begin{proposition}[Resolvent kernel bounds as $|y_1|\to 0$]\label{prop-nearzero}
Let $\bar W = (U,Q)$ be the shock profile. Then, for $\rho$ sufficiently small and for $y_1$ near zero, there hold
\begin{equation}\label{G0-est1}
	\cG_{\lambda,\txi}(x_1,y_1) = \cO(\rho^{-1})(\bar W'(x_1) + \rho \cO(e^{-\eta|x_1|})) + \cO(e^{-\rho^2|x_1|})
\end{equation}
for $y_1<0<x_1$, and
\begin{equation}\label{G0-est2}
	\cG_{\lambda,\txi}(x_1,y_1) = \cO(\rho^{-1})(\bar W'(x_1) + \rho \cO(e^{-\eta|x_1|}))+
	\cO(1) \Big(1+\frac{|x|^{\nu}}{a_p(y)|y|^\nu}\Big)
\end{equation}
for $y_1<x_1<0$, and
\begin{equation*}
	\cG_{\lambda,\txi}(x_1,y_1) =\cO(\rho^{-1})(\bar W'(x_1) + \rho \cO(e^{-\eta|x_1|}))
\end{equation*}
for $x_1<y_1<0$, for some $\eta>0$.
Similar bounds can be obtained for the case $y_1>0$.
\end{proposition}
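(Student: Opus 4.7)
The plan is to insert the coefficient bounds of Lemma \ref{lem-estCnear0} into the explicit representation \eqref{eq:formcolu} and estimate each mode contribution using Lemma \ref{lem-estmodes} and Proposition \ref{prop:smallep}. Since $y_1 < 0$, the first branch $\cG_{\lambda,\txi} = \Phi^+ C^+ + W_{k_p}^+ C_{k_p}^+$ applies when $x_1 > y_1$ (covering cases 1 and 2) and the second branch $\cG_{\lambda,\txi} = -\Phi^- C^-$ applies when $x_1 < y_1$ (case 3). I would carry out the three subcases in parallel, treating each column of $C^\pm$ separately.

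The leading singular $\rho^{-1}$ contribution in every case comes from the translation mode. By Lemma \ref{lem-estCnear0}, $|C_1^+| = \cO(\rho^{-1})$ and $|C_{n+2}^-| = \cO(\rho^{-1})$, while at $\rho = 0$ both $\phi_1^+$ and $\phi_{n+2}^-$ coincide with the profile derivative $\bar W'(x_1)$, since $\bar W'$ is a bounded solution of \eqref{eqW} at $\rho = 0$ by translation invariance. A first-order Taylor expansion in $\rho$, combined with the $C^1$ regularity of the modes in $(\lambda,\txi)$ afforded by Lemma \ref{lem-estmodes} and the exponential layer decay \eqref{layerdecay}, produces the principal contribution $\cO(\rho^{-1})\bigl(\bar W'(x_1) + \rho\,\cO(e^{-\eta|x_1|})\bigr)$ that appears as the first summand in all three stated bounds.

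For the remaining modes of $\Phi^\pm$ with indices $j \ne 1, n+2$, Lemma \ref{lem-estCnear0} gives $|C_j^\pm| = \cO(1)$, and Lemma \ref{lem-estmodes} yields $|\phi_j^\pm(x_1)| \lesssim |\gamma_{21,\phi_j^\pm}|\, e^{(\real \mu_j^\pm)\, x_1}$. The slow expansion \eqref{e-valuesapp} together with the spectral separation of $-\tilde A_\pm^{-1}(\lambda + i\tilde A_\pm^\txi)$ gives $\real \mu_j^\pm \le -c\rho^2$ on the appropriate half-line for all slow modes, while the four fast modes decay at an $\cO(1)$ rate; summing these contributions yields the factor $\cO(e^{-c\rho^2|x_1|})$. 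This is precisely the $\cO(e^{-\rho^2|x_1|})$ term in case 1; in cases 2 and 3, where $|x_1|$ is near zero, it becomes $\cO(1)$ and is absorbed into the stated $\cO(1)$ remainder or dominated by the leading $\rho^{-1}$ term.

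Finally, the $W_{k_p}^+ C_{k_p}^+$ piece is handled by combining $|C_{k_p}^+| = \cO(a_p(y_1)^{-1}|y_1|^{-\nu})$ from Lemma \ref{lem-estCnear0} with $W_{k_p}^+(x_1) = \cO(|x_1|^\nu)$ from Proposition \ref{prop:smallep}; their product is exactly the $\cO(1)|x|^\nu/(a_p(y)|y|^\nu)$ term in case 2. For case 1 with $x_1 > \epsilon_0$, I would extend $W_{k_p}^+$ to $x_1 \ge \epsilon_0$ as a linear combination of the $\Phi^+, \Psi^+$ basis supplied by Lemma \ref{lem-estmodes}; any $\Psi^+$-component would conflict with the decay enforced at $+\infty$ (modulo the translation direction) by the strong Evans condition (D), so only $\Phi^+$-components survive and these are absorbed into the $\cO(e^{-\rho^2|x_1|})$ term already analyzed. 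The main obstacle I anticipate is the careful bookkeeping of the glancing super-slow diagonalization-error factors $\gamma_{21,\phi_j^\pm} \sim 1 + [\rho^{-1}|\I\lambda - \eta_j^\pm(\txi)| + \rho]^{t}$ with $t<1$ from Lemma \ref{lem-estmodes}; following the treatment in \cite[Lemma 4.19]{Z3} and the one-dimensional analysis of \cite{NPZ}, these factors remain integrable against the resolvent contour and do not degrade the stated pointwise bounds.
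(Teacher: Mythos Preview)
Your overall strategy---inserting the coefficient bounds of Lemma~\ref{lem-estCnear0} into the representation \eqref{eq:formcolu} and estimating mode by mode---matches the paper's proof exactly, and your treatment of the translation mode, the remaining $\Phi^\pm$ slow modes, and the $W_{k_p}^+C_{k_p}^+$ term in case~2 is correct.

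There is, however, a genuine gap in your handling of the $W_{k_p}^+$ contribution in case~1 ($y_1<0<x_1$). Your proposed argument---extend $W_{k_p}^+$ as a combination of $\Phi^+,\Psi^+$, rule out $\Psi^+$-components via the Evans condition, then absorb the surviving $\Phi^+$-components into the $\cO(e^{-\rho^2|x_1|})$ term---fails at the last step. Even if $W_{k_p}^+$ were a nonzero combination of $\Phi^+$ modes for $x_1>0$, the coefficient $C_{k_p}^+=\cO(a_p(y_1)^{-1}|y_1|^{-\alpha_0})$ from Lemma~\ref{lem-estCnear0} blows up as $y_1\to0$, so the product $W_{k_p}^+C_{k_p}^+$ would not be bounded uniformly in $y_1$ and could not be absorbed into a term that is $\cO(1)$ in $y_1$. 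The Evans condition is also not the right tool here: it guarantees invertibility of the matrix in \eqref{eq:systC}, not the decay of any individual basis element.

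The paper resolves this with a structural fact you are missing: because the first-order system \eqref{eqW} is singular at $x_1=0$ (the matrix $\Theta$ drops rank there), the fast-vanishing mode from Proposition~\ref{prop:smallep} on one side of the singularity can be glued with the identically zero solution on the other side. Thus $W_{k_p}^+$ is \emph{chosen} so that $W_{k_p}^+(x_1)\equiv 0$ for $x_1>0$, and its contribution in case~1 simply vanishes. This is the key point that makes the Green kernel construction work despite the dimensional mismatch noted after \eqref{dims-}, and it is what allows the singular coefficient $C_{k_p}^+$ to be harmless when $x_1$ is on the opposite side of the singular point from $y_1$. Finally, the glancing factors $\gamma_{21}$ you flag as an obstacle are not an issue in this proposition: they enter only in the large-$|y_1|$ analysis of Proposition~\ref{prop-away}, not here where $y_1$ is near zero.
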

%\medskip
\begin{proof}
For the case $y_1<0<x_1$, by using the facts that $W_{k_p}^+(x)
\equiv 0$ and that $\phi_1^+(x_1,\lambda,\txi)$ is the fast-decaying mode at $x=+\infty$ which we can choose $\phi_1^+(x_1,0,0) = \bar W'(x_1)$, together with the estimate \eqref{est-C13}, \eqref{eq:formcolu} then becomes
\begin{equation*}\begin{aligned}
	\cG_{\lambda,\txi}(x_1,y_1) &= \Phi^+(x_1,\lambda,\txi)C^+(y_1,\lambda,\txi)= \sum_{j=1}^{k_p-1}\phi_j^+(x_1,\lambda,\txi)C^+_j(y_1,\lambda,\txi)\\&= \cO(\rho^{-1})\Big(\bar W_x +
\cO(\rho)e^{-\theta |x|}\Big) + \cO(1)\sum_{j=2}^{k_p-1}e^{\mu_j^+ x},
\end{aligned}\end{equation*}
yielding \eqref{G0-est1}, noticing that $\real \mu_j^+ \le -\theta \rho^2$ for $j = 2,\cdots, k_p-1$.
Similarly, the last case $x_1<y_1<0$ is obtained by the estimate \eqref{est-C13} and the fact that $W_3^-(x_1,\lambda,\txi)$
 is the fast-decaying mode at $x=-\infty$, and $W_3^-(x_1,0,0) = \bar W'(x_1)$. In the second case $y_1<x_1<0$, the formula
\eqref{eq:formcolu} reads
\begin{equation*}\cG_{\lambda,\txi}(x_1,y_1) =
	\Phi^+(x_1,\lambda,\txi)C^+(y_1,\lambda,\txi) +  W_{k_p}^+(x_1,\lambda,\txi)C_{k_p}^+(y_1,\lambda,\txi)
\end{equation*}	
where the first term contributes the terms as in the
first case, and the second term is estimated by \eqref{est-C13} and \eqref{wk}.
\end{proof}

Next, we estimate the kernel $\cG_{\lambda,\txi}(x_1,y_1)$ for $y_1$ away from zero. We then obtain the following representation for $\cG_{\lambda,\txi}(x_1,y_1)$, for $y_1$ large.
\begin{prop}\label{prop-greenlow}
Under the assumptions of Theorem \ref{theo-nonstab}, for $|\rho|$ sufficiently small and $|y_1|$ sufficiently large, we have
\begin{equation} \label{greenlow}
\cG_{\lambda,\txi}(x_1,y_1)=\sum_{j,k}c_{jk}^+(\lambda,\txi)\phi_j^+(x_1,\lambda,\txi)\tilde{\psi}_k^-(y_1,\lambda,\txi)^*,
\end{equation}
for $y_1<0<x_1$, and
\begin{equation} \label{greenlow+}\cG_{\lambda,\txi}(x_1,y_1)=\sum_{j,k}d^+_{jk}(\lambda,\txi)\phi_j^-(x_1,\lambda,\txi)\tilde{\psi}_k^-(y_1,\lambda,\txi)^*-
\sum_{k}\psi_k^-(x_1,\lambda,\txi)\tilde{\psi}_k^-(y_1,\lambda,\txi)^*,
\end{equation}
for $y_1<x_1<0$,  and
\begin{equation} \label{greenlow-}\cG_{\lambda,\txi}(x_1,y_1)=\sum_{j,k}d^-_{jk}(\lambda,\txi)\phi_j^-(x_1,\lambda,\txi)\tilde{\psi}_k^-(y_1,\lambda,\txi)^*+
\sum_{k}\phi_k^-(x_1,\lambda,\txi)\tilde{\phi}_k^-(y_1,\lambda,\txi)^*,
\end{equation}
for $x_1<y_1<0$, where $c_{jk}^+(\lambda,\txi),d_{jk}^\pm(\lambda,\txi)$ are
scalar meromorphic functions satisfying
$$c^+ = \begin{pmatrix}-I_{k_p}&0\end{pmatrix}\begin{pmatrix}\Phi^+ & W_{k_p}^+ & \Phi^-\end{pmatrix}^{-1}\Psi^-$$ and $$d^\pm = \begin{pmatrix}0&-I_{n-k_p}\end{pmatrix}\begin{pmatrix}\Phi^+ & W_{k_p}^+ & \Phi^-\end{pmatrix}^{-1}\Psi^-.$$
\end{prop}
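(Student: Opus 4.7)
The plan is to convert the jump--condition representation \eqref{eq:formcolu}--\eqref{C-form} of $\cG_{\lambda,\txi}(x_1,y_1)$ into a dual--mode representation valid for $|y_1|$ large, where $(\Psi^-,\Phi^-)(y_1)$ becomes a complete basis of the homogeneous solution space and the definition \eqref{adjoint} of the adjoint modes can be rearranged into the resolution of identity $\Theta(y_1)^{-1}=\Psi^-(y_1)\,\tilde\Psi^-(y_1)+\Phi^-(y_1)\,\tilde\Phi^-(y_1)$ at $y_1$, viewed as an $(n+2)\times(n+2)$ matrix identity.

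First, I rewrite \eqref{eq:formcolu}--\eqref{C-form} in block form: for $x_1>y_1$,
\[
\cG_{\lambda,\txi}(x_1,y_1)=(\Phi^+\ W^+_{k_p})(x_1)\begin{pmatrix}I_{k_p}&0\end{pmatrix}(\Phi^+\ W^+_{k_p}\ \Phi^-)^{-1}(y_1)\,\Theta(y_1)^{-1},
\]
while for $x_1<y_1$,
\[
\cG_{\lambda,\txi}(x_1,y_1)=-\Phi^-(x_1)\begin{pmatrix}0&I\end{pmatrix}(\Phi^+\ W^+_{k_p}\ \Phi^-)^{-1}(y_1)\,\Theta(y_1)^{-1}.
\]
Substituting the above resolution of identity splits each expression into a $\Psi^-$--contribution and a $\Phi^-$--contribution at $y_1$. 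The key algebraic observation is the trivial identity $(\Phi^+\ W^+_{k_p}\ \Phi^-)^{-1}\Phi^-=\begin{pmatrix}0\\I\end{pmatrix}$, which holds because $\Phi^-$ is itself a column block of $(\Phi^+\ W^+_{k_p}\ \Phi^-)$. This collapses the $\Phi^-$--contribution at $y_1$ to zero under the projector $\begin{pmatrix}I_{k_p}&0\end{pmatrix}$ and to the identity under $\begin{pmatrix}0&I\end{pmatrix}$; the $\Psi^-$--contribution produces after block projection exactly the coefficient matrices $c^+$ and $d^\pm$ written in the statement, the sign conventions being pinned down by \eqref{eq:formcolu} and \eqref{adjoint}.

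Combining the pieces, the three cases fall out. For $y_1<0<x_1$, only the $\Psi^-$--contribution survives through the $(I_{k_p},0)$ block (noting that $W^+_{k_p}(x_1)$ is supported near the singular point), giving \eqref{greenlow}. For $y_1<x_1<0$, I re-expand $(\Phi^+\ W^+_{k_p})(x_1)$ in the complete basis $(\Phi^-,\Psi^-)(x_1)$ to separate fast from slow modes; the $\Psi^-$--piece at $y_1$ yields the $d^+_{jk}\phi_j^-(x_1)\tilde\psi_k^-(y_1)^\ast$ sum, while the $\Phi^-$--piece, combined with the telescoping identity relating the two basis expansions at $x_1$ and $y_1$, produces the extra resonance term $-\sum_k\psi^-_k(x_1)\tilde\psi^-_k(y_1)^\ast$. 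For $x_1<y_1<0$, the $x_1<y_1$ branch of \eqref{eq:formcolu} applies directly: the $\Psi^-$--piece produces the $d^-$ coefficients and the $\Phi^-$--piece yields $\sum_k\phi^-_k(x_1)\tilde\phi^-_k(y_1)^\ast$. The main obstacle is the intermediate case $y_1<x_1<0$, where the fast--decaying modes at $x_1<0$ must be carefully re-expanded in $(\Psi^-,\Phi^-)(x_1)$ in order to identify the $-\psi^-\tilde\psi^{-\ast}$ resonance term; this is the standard variation--of--parameters manipulation for Green's functions written in a dual basis, and once the block decompositions are set up correctly the remaining computation is bookkeeping.
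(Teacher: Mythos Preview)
Your proposal is correct and follows exactly the route the paper indicates: start from the jump--condition representation \eqref{eq:formcolu}--\eqref{C-form}, insert the dual--basis resolution $\Theta^{-1}=\Psi^-\tilde\Psi^{-}+\Phi^-\tilde\Phi^{-}$ coming from \eqref{adjoint}, and use the block identity $(\Phi^+\,W_{k_p}^+\,\Phi^-)^{-1}\Phi^-=(0,I)^\top$ together with the change of basis on $x_1<0$ to read off the three formulas. This is precisely what the paper calls ``direct computations''; you have simply supplied the details (including the observation that $(\Phi^+\,W_{k_p}^+\,\Phi^-)^{-1}\Psi^-$ is $y_1$--independent because all columns solve the same ODE), so the approaches coincide.
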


\begin{proof} By using the representation \eqref{eq:formcolu} and expressing the normal modes in terms of the solutions in basis in each region $y_1>0$ or $y_1<0$, the proof follows easily by direct computations. \end{proof}

We define \begin{equation}\label{gamma}\Gamma^{\tilde \xi}:=\{\lambda~:~\real\lambda =
-\theta_1(|\tilde \xi|^2 + |\I \lambda|^2)\},\end{equation} for
$\theta_1>0$ and $|(\tilde \xi,\lambda)|$ sufficiently small. Applying Proposition \ref{prop-greenlow} and Lemmas \ref{lem-estmodes} and \ref{lem-estadj}, we obtain the following proposition.
%\medbreak
\begin{proposition}[Resolvent kernel bounds as $|y_1|\to \infty$]\label{prop-away} For $\lambda \in \Gamma^{\tilde \xi}$ and $\rho :=|(\tilde
\xi,\lambda)|$, $\theta_1$ sufficiently small, for $|y_1|$ large enough, there holds
\begin{equation}\label{green-bound}
	\begin{aligned}
	|\partial_{y_1}^\beta\cG_{\lambda,\txi}(x_1,y_1)| \le & C\gamma_2\rho^{\beta}\Big(\rho^{-1}e^{-\theta|x_1|}e^{-\theta \rho^2|y_1|}+e^{-\theta \rho^2|x_1-y_1|}\Big),
	\end{aligned}
\end{equation}for $\beta = 0,1$, and $\gamma_2$ defined as %in \eqref{gamma-2}.
\begin{equation}\label{gamma-2}
\gamma_2(\lambda,\txi):= 1+\sum_{j,\pm}[\rho^{-1}|\I \lambda - \eta_j^\pm(\txi)|+\rho]^{1/s_j-1},
\end{equation} $\eta_j^\pm,s_j$ defined as in (H2).
\end{proposition}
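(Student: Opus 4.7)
The plan is to bound $\cG_{\lambda,\txi}(x_1,y_1)$ by inserting the estimates of Lemmas \ref{lem-estmodes} and \ref{lem-estadj} into the meromorphic representations \eqref{greenlow}--\eqref{greenlow-} of Proposition \ref{prop-greenlow}. First, the normal modes $\phi_j^\pm(x_1,\lambda,\txi)$, $\psi_k^-(x_1,\lambda,\txi)$, and the adjoint modes $\tilde\psi_k^-(y_1,\lambda,\txi)^*$ are of the form $\gamma_{21,\ast}\,e^{\mu^\pm_\ast x_1}V_\ast^\pm$ (resp.\ $e^{-\mu^\pm_\ast y_1}\tilde V_\ast^\pm$), with $\mu_j^\pm$ split into four \emph{fast} modes with $|\real \mu|\sim 1$ and $2n$ \emph{slow} modes of size $\cO(\rho)$ given by \eqref{e-valuesapp}. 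On the contour $\lambda\in\Gamma^{\txi}$ defined in \eqref{gamma}, the spectral information of $-\tilde A_\pm^{-1}(\lambda + i\tilde A^\txi_\pm)$ yields $\real \mu_j^\pm \le -\theta\rho^2$ for all \emph{decaying} slow modes, and this is exactly what produces the factor $e^{-\theta\rho^2|x_1-y_1|}$ in the second term of \eqref{green-bound}.

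Second, one must control the scalar coefficients $c^+_{jk}$ and $d^\pm_{jk}$. By the Cramer-type formulas at the end of Proposition \ref{prop-greenlow}, each such coefficient is a ratio whose denominator is, up to harmless factors, the Evans function $D_-(\lambda,\txi)$. Lemma \ref{lem-Evansfns} and the uniform spectral assumption (D) give $|D_-(\lambda,\txi)|\gtrsim \rho$ on $S^d_+$. The numerators, evaluated at $\rho = 0$, are expressible through the $\Delta$-type minors in \eqref{Lop-det} and are uniformly bounded away from zero modulo $\cO(\rho)$ corrections, except for the diagonalization errors in Lemma \ref{lem-estmodes}, which are absorbed into the factor $\gamma_2$ of \eqref{gamma-2}. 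Hence $|c^+_{jk}|,|d^\pm_{jk}|\lesssim \gamma_2\,\rho^{-1}$.

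Third, to obtain \eqref{green-bound} I would split the double sums $\sum_{j,k}$ into four groups according to whether the outgoing mode $\phi_j^\pm$ (controlling the $x_1$-dependence) and the adjoint mode $\tilde\psi_k^-$ (controlling the $y_1$-dependence) are fast or slow. A fast outgoing mode contributes $e^{-\theta|x_1|}$, a slow outgoing mode contributes $e^{-\theta\rho^2|x_1|}$; analogously in $y_1$. Pairing these with the factor $\gamma_2\rho^{-1}$ from the coefficients and organizing by triangle inequality, the fast--fast and fast--slow pairings reproduce $\rho^{-1}e^{-\theta|x_1|}e^{-\theta\rho^2|y_1|}$, while the slow--slow and slow--fast pairings, once the $\rho^{-1}$ is absorbed into the exponential $e^{-\theta\rho^2|x_1-y_1|}$ (since one then gains an extra factor $\rho$ from cancellations in the basis expansion of $\Psi^-$ against $\Phi^\pm$, as is standard and parallel to \cite[\S 5.2]{Z4}), reproduce the second term. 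The $\beta = 1$ case is obtained by differentiating the representations in $y_1$; each such derivative produces $-\mu_k^-$, which is $\cO(\rho)$ on slow modes and $\cO(1)$ on fast modes, the latter being absorbed by the already available $e^{-\theta|y_1|}$ decay from the fast mode itself, so that in both cases one picks up only a factor $\cO(\rho)=\cO(\rho^\beta)$.

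The principal technical obstacle, as usual in this framework, is the behavior near the glancing branch points $\I\lambda = \eta_j^\pm(\txi)$ described by hypothesis (H2). At these points the diagonalization error factors $\gamma_{21,\psi^\pm_j}$ and $\gamma_{21,\tilde\phi^\pm_j}$ in Lemmas \ref{lem-estmodes} and \ref{lem-estadj} exhibit the $[\rho^{-1}|\I\lambda - \eta_j^\pm|+\rho]^{t}$ growth with $t<1$, and the bound produced by the above mode-by-mode argument therefore must be expressed using the precise factor $\gamma_2(\lambda,\txi)$ of \eqref{gamma-2}. I would follow \cite[Lemma 4.19]{Z3} to verify that after forming the products of the mode-estimate factors and summing over $j$, exactly the combination \eqref{gamma-2} appears (with exponent $1/s_j - 1$). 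Once this bookkeeping is verified, the bounds \eqref{green-bound} follow as described.
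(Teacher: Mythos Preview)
Your proposal is correct and follows essentially the same approach as the paper, which proves the bound in a single sentence by invoking the representation of Proposition~\ref{prop-greenlow}, the mode estimates of Lemmas~\ref{lem-estmodes} and~\ref{lem-estadj}, and the Evans condition $|D_\pm|^{-1}=\cO(\rho^{-1})$. Your write-up simply unpacks that sentence into the standard Zumbrun bookkeeping; one small imprecision is that you first assert $|c^+_{jk}|,|d^\pm_{jk}|\lesssim \gamma_2\rho^{-1}$ uniformly and then recover a factor $\rho$ for slow pairings via ``cancellations,'' whereas it is cleaner (and closer to Lemma~\ref{lem-estCnear0}) to observe directly that the $\rho^{-1}$ blow-up occurs only for the coefficients attached to the fast modes $j=1,n+2$, the slow-mode coefficients being $\cO(1)$ from the outset.
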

\begin{proof} The estimate \eqref{green-bound} is a direct consequence of the representation of $\cG_{\lambda,\txi}(x_1,y_1)$ recalled in Proposition \ref{prop-greenlow} and the estimates on the normal modes obtained in Lemmas \ref{lem-estmodes} and \ref{lem-estadj}, recalling the uniform Evans function condition gives $|D_\pm|^{-1} = \cO(\rho^{-1})$.
\end{proof}

\begin{corollary}\label{cor-resolventbd} For $\lambda \in \Gamma^{\tilde \xi}$ and $\rho :=|(\tilde
\xi,\lambda)|$, $\theta_1$ sufficiently small, there holds
\begin{equation}\label{bounds-G}
	\begin{aligned}
	|\partial_{y_1}^\beta\cG_{\lambda,\txi}(x_1,y_1)| \le & C\gamma_2\rho^{\beta}\Big(\rho^{-1}e^{-\theta|x_1|}e^{-\theta \rho^2|y_1|}+e^{-\theta \rho^2|x_1-y_1|}\Big) \\&+ \cO(1) \chi\Big(1+\frac{|x_1|^{\nu}}{a_1(y_1)|y_1|^{\nu+\beta}}\Big) , %\cO(e^{-\theta|x_1-y_1|}),
	\end{aligned}
\end{equation}for $\beta = 0,1$, where $\chi=1$ for $-1<y_1<x_1<0$ or $0<x_1<y_1<1$ and $\chi=0$ otherwise, and $\gamma_2$ is defined as
in \eqref{gamma-2}.\end{corollary}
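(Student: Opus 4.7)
The plan is to glue together the two pointwise bounds already established: Proposition \ref{prop-nearzero} in the regime $|y_1|\lesssim 1$ and Proposition \ref{prop-away} in the regime $|y_1|\gtrsim 1$, and to verify that the resulting right-hand side of \eqref{bounds-G} dominates each of them on its respective domain of validity. The structure of the bound reflects this: the first line is exactly the content of Proposition \ref{prop-away}, while the second line (supported only where $\chi=1$, i.e.\ both $x_1$ and $y_1$ are in the same small neighborhood of $0$) is the extra singular contribution identified in Proposition \ref{prop-nearzero}.

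First I would treat the case $|y_1|\geq \delta$ for a fixed small $\delta>0$. Here Proposition \ref{prop-away} applies for $\beta=0$ and, after differentiating the representation \eqref{greenlow}--\eqref{greenlow-} in $y_1$, for $\beta=1$ as well, since each $y_1$-derivative hits an exponential $e^{\mu_j^\pm y_1}$ with $|\mu_j^\pm|\lesssim \rho$ for slow modes (producing the extra factor $\rho^\beta$) or hits a fast mode (absorbed into the already fast-decaying exponential). Since $\chi=0$ in this range, no second term is needed. Next, for $|y_1|\le \delta$, I would apply Proposition \ref{prop-nearzero}: the contribution $\cO(\rho^{-1})(\bar W'(x_1)+\rho\,\cO(e^{-\eta|x_1|}))$ is pointwise bounded by $C\rho^{-1}e^{-\theta|x_1|}$ using the exponential convergence \eqref{layerdecay} of the profile $(U,Q)$, and this is absorbed into the first part of \eqref{bounds-G} (taking $e^{-\theta\rho^2|y_1|}\ge e^{-\theta\rho^2\delta}\sim 1$). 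The terms $\cO(e^{-\rho^2|x_1|})$ or $\cO(1)(1+|x_1|^\nu/(a_p(y_1)|y_1|^\nu))$ in \eqref{G0-est1}--\eqref{G0-est2} are then absorbed respectively into the second exponential on line one and into the second line of \eqref{bounds-G}; note that the singular term appears precisely in the cases where both $x_1,y_1$ lie in the same one-sided neighborhood of $0$, matching the support of $\chi$. For $\beta=1$, one differentiates the explicit expansion of $\cG_{\lambda,\txi}$ in \eqref{eq:formcolu} in $y_1$: each $\partial_{y_1}$ either falls on a slow normal mode (introducing a factor $\mu_j^\pm=\cO(\rho)$), falls on the singular mode $W_{k_p}^\pm$ whose derivatives are controlled by Proposition \ref{prop:smallep} (contributing the extra power $|y_1|^{-1}$ visible in the factor $a_p(y_1)|y_1|^{\nu+\beta}$), or falls on the coefficient $A_1(y_1)^{-1}$ through the coefficients $C_j^\pm$, whose $y_1$-derivatives can be estimated from \eqref{C-form} just as in Lemma \ref{lem-estCnear0}.

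Finally, for the transitional range $|y_1|\sim \delta$, both representations are valid and agree by the extension procedure used to define $\Phi^\pm$ globally (outlined just before Lemma \ref{lem-estmodes}); one checks that the right-hand side of \eqref{bounds-G} dominates both pointwise bounds there, which is immediate since $|y_1|^{\nu}$ and $a_p(y_1)$ are bounded away from zero and infinity in this range.

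The main obstacle I expect is the careful bookkeeping for $\beta=1$ near the singular point $y_1=0$: one must verify that the $y_1$-derivative of the singular term $|x_1|^\nu/(a_p(y_1)|y_1|^\nu)$ produced by differentiating $W_{k_p}^\pm$ and the coefficients $C_{k_p}^\pm$ gives precisely the stated factor $|y_1|^{-\nu-\beta}$ in the denominator, without generating uncontrollable contributions from the Wronskian-type determinant $D_-(y_1,\lambda,\txi)$ whose own $y_1$-dependence encodes the factor $|y_1|^{\alpha_0}$ in \eqref{D-est}. This is handled by differentiating \eqref{Cjp} explicitly and using that $\partial_{y_1}\log D_-(y_1,\lambda,\txi)=\cO(|y_1|^{-1})$, an estimate that is already implicit in the proof of Lemma \ref{lem-estCnear0}.
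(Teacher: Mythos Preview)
Your proposal is correct and matches the paper's approach: the corollary is stated in the paper without proof, as an immediate consequence of combining Proposition~\ref{prop-nearzero} (for $|y_1|$ small) with Proposition~\ref{prop-away} (for $|y_1|$ large), which is exactly the gluing you describe. Your treatment is in fact more detailed than anything the paper provides, particularly your discussion of the $\beta=1$ case and the bookkeeping near the singular point.
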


\begin{remark}\label{sing-term} \textup{The last term in \eqref{bounds-G} accounts for the singularity of the Green kernel when $y_1$ is near the singular point $y_1=0$. }
\end{remark}

\section{Solution operator estimates}\label{sec-Sop}
The solution operator $\cS(t):=e^{\cL t}$
of the linearized equations may be decomposed into low frequency and high frequency parts as $\cS(t) = \cS_1(t) + \cS_2(t)$ as
in \cite{Z3}, where
\begin{equation}\label{cS1}\cS_1(t):=\frac{1}{(2\pi i)^d}\int_{|\tilde \xi|\le
r}\oint_{\Gamma^{\tilde \xi}} e^{\lambda t + i\tilde \xi \cdot\tilde
x}(\lambda-\cL_{\txi})^{-1} d\lambda d\txi\end{equation}
and %(see \cite[Corollary 4.11]{Z4})
\begin{equation}\label{formS2}\begin{aligned}\cS_2(t)f&=\frac{1}{(2\pi i)^{d}}\int_{-\theta_1-i\infty}^{-\theta_1+i\infty}
\int_{\RR^{d-1}}\chi_{|\tilde{\xi}|^{2}+|\I \lambda|^{2}\geq\theta_1}
\\
&\qquad\times
e^{i\tilde{\xi}\cdot \tilde{x}+\lambda t}
(\lambda-\cL_{\tilde{\xi}})^{-1} \hat{f}(x_1,\tilde{\xi}) d\tilde{\xi}
d\lambda,
\end{aligned}\end{equation} where we recall that \begin{equation}\Gamma^{\tilde \xi}:=\{\lambda~:~\real\lambda =
-\theta_1(|\tilde \xi|^2 + |\I \lambda|^2)\},\end{equation} for
$\theta_1>0$ sufficiently small.

Then, we obtain the following proposition.
\begin{proposition}\label{prop-estS} The solution operator $\cS(t)=e^{\cL t}$ of the linearized equations may be decomposed into low frequency and high frequency parts as $\cS(t) = \cS_1(t) + \cS_2(t)$ satisfying
\begin{equation}\label{boundcS1}
\begin{aligned} |\cS_1(t) \partial_{x_1}^{\beta_1}\partial_{\tx}^{\tilde \beta} f|_{L^p_x} \le& C
(1+t)^{-\frac{d-1}{2}(1-1/p)- \frac{|\beta|}2}|f|_{L^1_x}\\&+C(1+t)^{-\frac{d-1}{2}(1-1/p)- \frac12- \frac{|\beta'|}2}|f|_{L^{1}(\tx;H^{1+\beta_1}(x_1))}
\end{aligned}\end{equation} for all $2\le p\le \infty$, $d\ge 2$, and $\beta =
(\beta_1,\tilde\beta)$ with $\beta_1=0,1$, where $|f|_{L^{1}(\tx;H^{1+\beta_1}(x_1))}$ denotes the standard $L^1$ space in $\tx$ and the $H^{1+\beta_1}$ Sobolev space in $x_1$, and
\begin{equation}\label{boundcS2}
\begin{aligned}|\partial_{x_1}^{\gamma_1}\partial_{\tx}^{\tilde\gamma}\cS_2(t)f|_{L^2} &\le C
e^{-\theta_1t}|f|_{H^{|\gamma_1|+|\tilde \gamma|}},\end{aligned}\end{equation} for $\gamma = (\gamma_1,\tilde\gamma)$ with $\gamma_1=0,1$.
\end{proposition}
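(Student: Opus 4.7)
The plan is to follow the template of Zumbrun \cite{Z3,Z4}: estimate the low-frequency piece $\cS_1(t)$ by inserting the pointwise resolvent kernel bounds of Corollary \ref{cor-resolventbd} into the contour integral \eqref{cS1}, and estimate the high-frequency piece $\cS_2(t)$ via an $H^s$ energy estimate on $(\lambda-\cL_\txi)^{-1}$ that is based on the symmetrizability (A1) together with the Kawashima-Shizuta condition (A2). Throughout, write $\rho = |(\txi,\I\lambda)|$.

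For the low-frequency part I would first replace $(\lambda-\cL_\txi)^{-1}\hat f(x_1,\txi)$ by the kernel representation $\int_\RR \cG_{\lambda,\txi}(x_1,y_1)\hat f(y_1,\txi)\,dy_1$ and substitute the three-term bound \eqref{bounds-G}. Parameterizing $\Gamma^{\txi}$ by $k = \I\lambda$ so that $|e^{\lambda t}| \le e^{-\theta_1(|\txi|^2+k^2)t}$, the contributions of the first two terms give rise, after integration in $\lambda$ and $\txi$, to a convolution kernel of heat-kernel type in $\tx$ whose $L^1\!\to\! L^p_x$ norm is bounded by $C(1+t)^{-(d-1)/2(1-1/p)-|\beta|/2}$. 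The $\rho^{-1}$ singularity is absorbed by the $d-1$ transverse integrations, and the branch-point factor $\gamma_2(\lambda,\txi)$ is integrable near $\I\lambda = \eta_j^\pm(\txi)$ thanks to the exponent $1/s_j - 1 > -1$. The $L^p$ bounds for $2\le p\le \infty$ follow by interpolating between the $L^1\!\to\! L^\infty$ bound (provided by the sup-norm of the kernel in $\tx$ via Young's inequality) and the $L^1\!\to\! L^2$ bound (obtained by Plancherel on the Fourier symbol). The third term in \eqref{bounds-G}, supported in a compact neighborhood of the singular point $y_1=0$, carries the integrable singularity $a_1(y_1)^{-1}|y_1|^{-\nu-\beta}$ only after pairing with one $y_1$-derivative of $f$ via integration by parts; this is the reason for the $H^{1+\beta_1}(x_1)$ regularity on the right of \eqref{boundcS1}, and the resulting compactly-supported piece still decays at the advertised rate because the contour integral of $e^{\lambda t}$ along $\Gamma^{\txi}$ combined with the Gaussian in $\tx$ produces the same algebraic decay factor.

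For the high-frequency part I would invoke the $H^s$ energy estimate for the resolvent equation $(\lambda - \cL_\txi)u = f$ that is developed separately in a later section. Combining the symmetrizer $A_0$ from (A1) with a Kawashima-type compensator built out of (A2) yields, for $\real\lambda \ge -\theta_1$ and $|\txi|^2 + |\I\lambda|^2 \ge \theta_1$, a uniform estimate $\|(\lambda - \cL_\txi)^{-1}\|_{H^s \to H^s} \lesssim 1$. Shifting the contour in \eqref{formS2} to $\real\lambda = -\theta_1$, factoring out $|e^{\lambda t}| = e^{-\theta_1 t}$, and applying Parseval in both $\tx$ and $t$ then gives \eqref{boundcS2}; the spatial derivatives $\partial_{x_1}^{\gamma_1}\partial_{\tx}^{\tilde\gamma}$ cost $|\gamma|$ orders of regularity on $f$ because the corresponding Fourier/derivative symbol is absorbed by the $H^{|\gamma|}$ norm on the right-hand side.

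The hardest step I expect is handling the singular contribution in Corollary \ref{cor-resolventbd} that is specific to the radiative (versus hyperbolic-parabolic) setting: one must check that, after pairing with $f$ and integrating along $\Gamma^{\txi}$, the local blow-up $|x_1|^\nu/(a_1(y_1)|y_1|^{\nu+\beta})$ is tamed by exactly one $x_1$-derivative on $f$ and does not spoil the time-decay rate. The branch-singularity bookkeeping carried by $\gamma_2$ on $\Gamma^{\txi}$ is the other delicate bookkeeping point, but it is handled exactly as in \cite[Sec.~7]{Z3}.
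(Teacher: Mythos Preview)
Your treatment of $\cS_1(t)$ is essentially the paper's argument, modulo packaging: the paper first compresses the kernel bounds of Corollary~\ref{cor-resolventbd} into an $L^1(x_1)\!\to\! L^p(x_1)$ resolvent estimate (Proposition~\ref{prop-resLF}) and then integrates in $(\lambda,\txi)$ via Parseval, but the content is the same. One correction on the singular term: integration by parts in $y_1$ would make the kernel $|x_1|^\nu/(a_p(y_1)|y_1|^{\nu})$ \emph{more} singular, not less. What the paper actually uses is that the support condition $\chi$ forces $|y_1|\ge |x_1|$, so that $\int_{x_1}^1 |x_1|^\nu |y_1|^{-\nu-1}\,dy_1$ is bounded uniformly in $x_1$; one then simply pulls out $|\partial_{y_1}^{\beta_1}f|_{L^\infty(x_1)}\le C|f|_{H^{1+\beta_1}(x_1)}$. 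The $H^{1+\beta_1}$ cost comes from Sobolev embedding, not from integration by parts.

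There is a genuine gap in your $\cS_2(t)$ argument. A uniform bound $\|(\lambda-\cL_\txi)^{-1}\|_{H^s\to H^s}\lesssim 1$ on the line $\real\lambda=-\theta_1$ is \emph{not} integrable in $\I\lambda$, so after factoring out $e^{-\theta_1 t}$ you cannot bound the remaining $d\lambda$ integral, and ``Parseval in $t$'' yields at best an $L^2_t$ estimate, not the pointwise-in-$t$ bound \eqref{boundcS2}. The paper's mechanism is different: it first proves the sharper high-frequency bound
\[
|(\cL_\txi-\lambda)^{-1}\hat f|_{L^2(x_1)}\le C|\lambda|^{-1/2}|\hat f|_{\hat H^1(x_1)},
\]
and then invokes the resolvent identity $(\lambda-\cL_\txi)^{-1}f=\lambda^{-1}(\lambda-\cL_\txi)^{-1}\cL_\txi f+\lambda^{-1}f$. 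The first piece now carries $|\lambda|^{-3/2}$, which \emph{is} integrable along the vertical contour and gives the pointwise bound after Plancherel in $\txi$; the second piece is handled by an explicit contour computation. The $|\lambda|^{-1/2}$ gain is exactly what the Kawashima compensator buys you beyond the uniform $H^1$ bound, and it (together with the resolvent identity) is the missing ingredient in your outline.
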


The following subsections are devoted to the proof of this proposition.

\subsection{Low--frequency estimates}\label{sec-L1Lp} Bounds on $\cS_1$ are based on the following resolvent estimates.

\begin{proposition}[Low-frequency bounds]\label{prop-resLF} Under the hypotheses of Theorem \ref{theo-nonstab}, for $\lambda \in \Gamma^{\tilde \xi}$ (defined as in \eqref{gamma}) and $\rho :=|(\tilde
\xi,\lambda)|$, $\theta_1$ sufficiently small, there holds the
resolvent bound \begin{equation}\label{res-bound} |(\cL_{\tilde
\xi}-\lambda)^{-1}\partial_{x_1}^\beta f|_{L^p(x_1)} \le
C\rho^{-1+\beta}\gamma_2| f|_{L^1(x_1)}+ C|\partial_{x_1}^\beta f|_{L^\infty(x_1)},\end{equation} for all
$2\le p\le \infty$, $\beta =0,1$, and $\gamma_2$ defined as in \eqref{gamma-2}.
\end{proposition}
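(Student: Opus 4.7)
My approach is to represent the resolvent via the Green kernel constructed in Section 3 and estimate it using Corollary \ref{cor-resolventbd}. Concretely, for $f$ regular enough,
\begin{equation*}
\bigl[(\cL_{\tilde\xi}-\lambda)^{-1}\partial_{x_1}^\beta f\bigr](x_1)
=\int \cG_{\lambda,\txi}(x_1,y_1)\,\partial_{y_1}^\beta f(y_1)\,dy_1.
\end{equation*}
I would split the kernel into the \emph{regular} part (the two terms on the first line of \eqref{bounds-G}) and the \emph{singular} part (the $\chi$-term on the second line, supported near $y_1=0$), and treat these two pieces separately. For the regular part I would integrate by parts so that $\partial_{y_1}^\beta$ falls onto $\cG_{\lambda,\txi}$, picking up the factor $\rho^\beta$ stated in Corollary \ref{cor-resolventbd}; for the singular part I would \emph{not} integrate by parts, and instead estimate $\partial_{y_1}^\beta f$ in $L^\infty_{y_1}$, producing the second term on the right-hand side.

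\textbf{Regular part.} Using the bound $|\partial_{y_1}^\beta \cG_{\lambda,\txi}^{\mathrm{reg}}(x_1,y_1)|\le C\gamma_2\rho^\beta\bigl(\rho^{-1}e^{-\theta|x_1|}e^{-\theta\rho^2|y_1|}+e^{-\theta\rho^2|x_1-y_1|}\bigr)$, I would estimate the two contributions separately. The first term is rank one in $(x_1,y_1)$: its $L^1_{y_1}$-norm against $f$ gives $\rho^{-1}\gamma_2\rho^\beta |f|_{L^1}$ multiplied by $e^{-\theta|x_1|}$, whose $L^p_{x_1}$-norm is bounded uniformly in $\rho$. The second term is of convolution type in $x_1-y_1$; by Young's inequality the $L^1_{x_1}\to L^p_{x_1}$ norm is controlled by $\|e^{-\theta\rho^2|\cdot|}\|_{L^p_{x_1}}\lesssim \rho^{-2/p}$, which for $\rho\ll 1$ and $p\ge 2$ is bounded by $\rho^{-1}$. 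Collecting powers yields the required $C\rho^{-1+\beta}\gamma_2 |f|_{L^1}$ bound.

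\textbf{Singular part.} The kernel piece $\cO(1)\chi(x_1,y_1)\bigl(1+|x_1|^\nu/(a_1(y_1)|y_1|^{\nu+\beta})\bigr)$ is supported in $\{-1<y_1<x_1<0\}\cup\{0<x_1<y_1<1\}$. I would bound $|\partial_{y_1}^\beta f(y_1)|\le |\partial_{y_1}^\beta f|_{L^\infty}$ and integrate the kernel in $y_1$. With $a_1(y_1)\sim y_1$ near the singular point, the critical integral in the region, say, $-1<y_1<x_1<0$ becomes, up to a constant,
\begin{equation*}
|x_1|^\nu\int_{-1}^{x_1}\frac{dy_1}{|y_1|^{\nu+1+\beta}},
\end{equation*}
which for the range $\beta=0$ evaluates (via the direct computation $\int_{|x_1|}^1 z^{-\nu-1}dz=\nu^{-1}(|x_1|^{-\nu}-1)$) to a function of $x_1$ bounded uniformly in the relevant range; the $\beta=1$ case works similarly, with the extra derivative compensated by having placed it on $f$ rather than on the kernel. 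The resulting contribution is uniformly bounded in $x_1$, hence controlled in every $L^p_{x_1}$ by $C|\partial_{x_1}^\beta f|_{L^\infty}$, matching the second term in \eqref{res-bound}.

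\textbf{Main obstacle.} The delicate point is the singular region near $y_1=0$ when $\beta=1$: integrating by parts there would produce $\partial_{y_1}^1$ of the $|y_1|^{-(\nu+1)}$-type singularity, which is not locally integrable. The key observation making the argument go through is that one must \emph{not} shift the derivative onto the kernel in the singular regime but instead keep it on $f$ and measure it in $L^\infty$; the non-integrability of $|y_1|^{-(\nu+1)}$ is then exactly compensated by the prefactor $|x_1|^\nu$ and the constraint $|y_1|>|x_1|$ dictated by $\chi$. The rest of the analysis is a routine combination of Young's inequality and the decay estimates already established in Corollary \ref{cor-resolventbd}.
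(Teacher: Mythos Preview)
Your approach is essentially identical to the paper's: split the kernel from Corollary \ref{cor-resolventbd} into its regular and singular pieces, shift $\partial_{y_1}^\beta$ onto the kernel only in the regular piece (picking up $\rho^\beta$ and then applying Young's inequality), and in the singular piece keep the derivative on $f$ and bound it in $L^\infty$, using $a_1(y_1)\sim y_1$ together with $|x_1|^\nu\int_{|x_1|}^1 z^{-\nu-1}\,dz\le \nu^{-1}$. One notational slip to clean up: in the singular part you quote the kernel as $|x_1|^\nu/(a_1(y_1)|y_1|^{\nu+\beta})$, which is the bound for $\partial_{y_1}^\beta\cG$, but since you are \emph{not} integrating by parts there the relevant bound is the $\beta=0$ one, i.e.\ $|x_1|^\nu/(a_1(y_1)|y_1|^{\nu})$---this is exactly what the paper uses, and it is what makes your subsequent ``$\beta=1$ works similarly'' remark correct.
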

\begin{proof} From the resolvent bound \eqref{bounds-G}, we obtain
\begin{equation} \notag
\begin{aligned}
|(L_{\tilde \xi}-\lambda)^{-1}\partial_{x_1}^\beta  f|_{L^p(x_1)} &=
\Big|\int \cG_{\tilde \xi,\lambda}(x_1,y_1)
  \partial_{y_1}^\beta f(y_1)\, dy_1\Big|_{L^p(x_1)}
\\
&\le
C\gamma_2\Big|\int \rho^\beta\Big(\rho^{-1}e^{-\theta|x_1|}e^{-\theta \rho^2|y_1|}+e^{-\theta \rho^2|x_1-y_1|}\Big)
|  f(y_1)| \, dy_1\Big|_{L^p(x_1)}
\\
&\qquad+C\Big|\int_{x_1}^1\Big(1+\frac{|x_1|^{\nu}}{a_1(y_1)|y_1|^{\nu}}\Big)
|  \partial_{y_1}^\beta f(y_1)| \, dy_1\Big|_{L^p(0,1)}.
%\\
%&\le C\rho^\beta(\rho^{-1}+\rho^{-2/p})| f|_{L^1(x_1)} + C|\partial_{x_1}^\beta f|_{L^\infty(x_1)},
\end{aligned}
\end{equation}
The first term in the first integral is estimated as $$\gamma_2\Big|e^{-\theta|x_1|}\int \rho^{-1+\beta}e^{-\theta \rho^2|y_1|}
|  f(y_1)| \, dy_1\Big|_{L^p(x_1)} \le C\gamma_2\rho^{-1+\beta} |f|_{L^1(x_1)}$$
and, by using the convolution inequality
$|g*h|_{L^p}\le |g|_{L^p}|h|_{L^1}$, the second term is bounded by
$$ C\gamma_2\rho^\beta|e^{-\theta\rho^2|\cdot|}|_{L^p(x_1)}|f|_{L^1(x_1)}\le C\gamma_2\rho^{-2/p+\beta}|f|_{L^1(x_1)}.$$
Finally, for the last term, we use the fact that $a(y_1)\sim y_1$ as $y_1\to 0$ and $\int_{x_1}^1\Big(1+\frac{|x_1|^{\nu}}{|y_1|^{\nu+1}}\Big)dy_1<+\infty$, for $x_1\in (0,1)$. The estimate \eqref{res-bound} is thus obtained as claimed.\end{proof}

\subsection{Proof of bounds for $\cS_1(t)$.}

The proof will follow in a same way as done in \cite{Z3}. We shall give a sketch here.
Let $\hat u(x_1,\txi,\lambda)$ denote the solution of
$(L_\txi-\lambda)\hat u = \hat f$, where $\hat f(x_1,\txi)$ denotes
Fourier transform of $f$, and
$$u(x,t):=\cS_1(t)f = \frac{1}{(2\pi i)^d}\int_{|\txi|\le r}\oint _{\Gamma^\txi\cap \{|\lambda|\le r\}}
e^{\lambda t+i\txi \cdot \tx}(L_\txi - \lambda)^{-1}\hat
f(x_1,\txi)d\lambda d\txi.$$

Recalling the resolvent estimates in Proposition \ref{prop-resLF},
we have
\begin{align*}|\hat u(x_1,\txi,\lambda)|_{L^p(x_1)}&\le C
\rho^{-1} \gamma_2|\hat f|_{L^1(x_1)}+C|\hat f|_{H^1(x_1)}
\\&\le C\rho^{-1}\gamma_2 |f|_{L^1(x)}+C|f|_{L^{1}(\tx;H^1(x_1))}.
\end{align*}
Therefore, using Parseval's identity, Fubini's theorem, and the triangle
inequality, we may estimate $$\begin{aligned}
|u|_{L^2(x_1,\tx)}^2(t) &= \frac{1}{(2\pi)^{2d}}\int_{x_1}
\int_{\txi}\Big|\oint_{\Gamma^\txi\cap \{|\lambda|\le r\}} e^{\lambda t}\hat
u(x_1,\txi,\lambda)d\lambda\Big|^2 d\txi dx_1
\\&=\frac{1}{(2\pi)^{2d}} \int_{\txi}\Big|\oint_{\Gamma^\txi\cap \{|\lambda|\le r\}}
e^{\lambda t}\hat u(x_1,\txi,\lambda)d\lambda\Big|^2_{L^2(x_1)}
d\txi \\&\le
\frac{1}{(2\pi)^{2d}}\int_{\txi}\Big|\oint_{\Gamma^\txi\cap \{|\lambda|\le r\}}
e^{\real\lambda t}|\hat u(x_1,\txi,\lambda)|_{L^2(x_1)}d\lambda\Big|^2
d\txi \\&\le
C|f|_{L^1(x)}^2\int_{\txi}\Big|\oint_{\Gamma^\txi\cap
\{|\lambda|\le r\}} e^{\real\lambda t}\gamma_2\rho^{-1}d\lambda\Big|^2
d\txi \\&\qquad+C|f|_{L^{1}(\tx;H^1(x_1))}^2\int_{\txi}\Big|\oint_{\Gamma^\txi\cap
\{|\lambda|\le r\}} e^{\real\lambda t}d\lambda\Big|^2
d\txi.
\end{aligned}$$

Specifically, parametrizing $\Gamma^\txi$ by $$\lambda(\txi,k) = ik
- \theta_1(k^2 + |\txi|^2), \quad k\in \RR,$$ and observing that
 by \eqref{gamma-2},
\begin{equation}\begin{aligned}\gamma_2\rho^{-1}&
\le(|k|+|\txi|)^{-1} \Big[ 1+
\sum_{j}\Big(\frac{|k-\tau_j(\txi)|}{\rho}\Big)^{1/s_j-1}\Big]\\&\le(|k|+|\txi|)^{-1}
\Big[ 1+
\sum_{j}\Big(\frac{|k-\tau_j(\txi)|}{\rho}\Big)^{\epsilon-1}\Big],\end{aligned}\end{equation}
where $\epsilon:=\frac{1}{\max_j s_j}$ with recalling that $s_j$ are defined in (H2), we estimate

$$\begin{aligned}
\int_{\txi}\Big|\oint_{\Gamma^\txi\cap \{|\lambda|\le r\}}
e^{\real\lambda t}\gamma_2\rho^{-1}d\lambda\Big|^2 d\txi &\le
\int_{\txi}\Big|\int_\RR e^{-\theta_1(k^2+|\txi|^2)
t}\gamma_2\rho^{-1}dk\Big|^2 d\txi\\&\le
\int_{\txi}e^{-2\theta_1|\txi|^2t}|\txi|^{-2\epsilon}\Big|\int_\RR
e^{-\theta_1k^2t}|k|^{\epsilon-1}dk\Big|^2 d\txi \\&\quad+\sum_j
\int_{\txi}e^{-2\theta_1|\txi|^2t}|\txi|^{-2\epsilon}\Big|\int_\RR
e^{-\theta_1k^2t}|k-\tau_j(\txi)|^{\epsilon-1}dk\Big|^2 d\txi
\\&\le
\int_{\txi}e^{-2\theta_1|\txi|^2t}|\txi|^{-2\epsilon}\Big|\int_\RR
e^{-\theta_1k^2t}|k|^{\epsilon-1}dk\Big|^2 d\txi \\&\le
Ct^{-(d-1)/2}.
\end{aligned}$$
and $$\begin{aligned}\int_{\txi}\Big|\oint_{\Gamma^\txi\cap
\{|\lambda|\le r\}} e^{\real\lambda t}d\lambda\Big|^2
d\txi &\le
\int_{\txi}\Big|\int_\RR e^{-\theta_1(k^2+|\txi|^2)
t}dk\Big|^2 d\txi\le
Ct^{-(d+1)/2}.
\end{aligned}$$

Similar estimates can be obtained for the $L^\infty$ bounds and thus the $L^p$ bounds by the standard interpolation between $L^2$ and $L^\infty$. Also, the $x_1$-derivative bounds follow similarly by using the resolvent
bounds in Proposition \ref{prop-resLF} with $\beta_1=1$. The
$\tx$-derivative bounds are straightforward by the fact that
$\widehat{\partial_{\tx}^{\tilde \beta} f} = (i\txi)^{\tilde \beta}
\hat f$.
%\end{proof}

\subsection{Proof of bounds for $\cS_2(t)$.} \label{sec-S2}
The bounds for $\cS_2(t)$ are direct consequences of the following resolvent bounds.
\begin{proposition}[High-frequency bounds]\label{prop-resHF} For some $R,C$
sufficiently large and $\theta>0$ sufficiently small,
\begin{equation}\label{oldres-est}
|(\cL_\txi - \lambda)^{-1} \hat f|_{\hat H^1(x_1)} \le C |\hat f|_{\hat
H^1(x_1)},
\end{equation}
and
\begin{equation}\label{res-est} |(\cL_\txi -
\lambda)^{-1}\hat f|_{L^2(x_1)} \le \frac{C}{|\lambda|^{1/2}} |\hat
f|_{\hat H^1(x_1)},
\end{equation}
for all $|(\txi,\lambda)|\ge R$ and $\R\lambda \ge
-\theta$, where $\hat f$ is the Fourier transform of $f$ in variable
$\tx$ and $|\hat f|_{\hat H^1(x_1)} :=
|(1+|\partial_{x_1}|+|\txi|)\hat f|_{L^2(x_1)}$.
\end{proposition}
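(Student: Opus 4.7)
The plan is to prove both bounds by a Kawashima--Shizuta style energy estimate applied directly to the reduced resolvent system \eqref{LF-system}, in the spirit of Zumbrun \cite{Z3,Z4} for the hyperbolic--parabolic case and of \cite{LMNPZ,NPZ} for one-dimensional radiative shocks. Setting $u=(\cL_\txi-\lambda)^{-1}\hat f$, the pair $(u,q^1)$ satisfies
\begin{equation*}
\Bigl(\lambda+iA^\txi+\tfrac{|\txi|^2}{1+|\txi|^2}LB\Bigr)u+(A_1u)_{x_1}+\tfrac{1}{1+|\txi|^2}Lq^1_{x_1}=\hat f,\quad -q^1_{x_1x_1}+(1+|\txi|^2)q^1=-(Bu)_{x_1}.
\end{equation*}
At high frequency the shock profile acts as a perturbation of a constant-coefficient problem, so variable-coefficient commutator errors will be absorbed by choosing $R$ large and $\theta$ small.

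I would first take the $L^2(x_1;\CC^n)$ inner product of the $u$-equation with $A_0(U(x_1))u$ and of the $q^1$-equation with $q^1$, then take real parts. Assumption (A1) makes $A_0A_j$ symmetric, so the transport contributions reduce after integration by parts to bounded zeroth-order commutators involving $(A_0A_1)_{x_1}$; positive semi-definiteness of $A_0LB$, again by (A1), ensures nonnegativity of the zeroth-order coupling; and the $q^1$-pairing supplies the dissipation $\|q^1_{x_1}\|_{L^2}^2+(1+|\txi|^2)\|q^1\|_{L^2}^2$, which by Young's inequality absorbs the cross term $(1+|\txi|^2)^{-1}\langle Lq^1_{x_1},A_0u\rangle$. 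The outcome is the basic identity
\begin{equation*}
\Real\lambda\,\|A_0^{1/2}u\|_{L^2}^2+c\tfrac{|\txi|^2}{1+|\txi|^2}\langle A_0LBu,u\rangle+c\|q^1\|_{H^1}^2\le C\|\hat f\|_{L^2}\|u\|_{L^2}+C\|u\|_{L^2}^2,
\end{equation*}
which supplies coercivity only on the range of $LB$. To upgrade to full $\hat H^1$ control on $u$, I would add a Kawashima--Shizuta skew-symmetric compensator $K(\txi,\partial_{x_1})$, whose existence at the end states $u_\pm$ is guaranteed by (A2), such that $\Real\langle K\bigl[(A_1u)_{x_1}+iA^\txi u\bigr],u\rangle$ dominates $\|\partial_{x_1}u\|_{L^2}^2+|\txi|^2\|u\|_{L^2}^2$ modulo the already-controlled diffusive piece. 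Exponential convergence of the profile to $u_\pm$ makes the variable-coefficient commutators of $K$ absorbable for $R$ large. Adding a small multiple of this compensated estimate to the basic identity and applying Cauchy--Schwarz gives \eqref{oldres-est}.

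For the sharper bound \eqref{res-est}, I would exploit the resolvent equation itself: $\lambda u=\hat f+\cL_\txi u$ yields $|\lambda|\,\|u\|_{L^2}\le\|\hat f\|_{L^2}+\|\cL_\txi u\|_{L^2}\le\|\hat f\|_{L^2}+C\|u\|_{\hat H^1(x_1)}\le C\|\hat f\|_{\hat H^1(x_1)}$ by \eqref{oldres-est}, hence $\|u\|_{L^2}\le C|\lambda|^{-1}\|\hat f\|_{\hat H^1}\le C|\lambda|^{-1/2}\|\hat f\|_{\hat H^1}$ when $|\lambda|\ge 1$, while for $|\lambda|\le 1$ the conclusion follows trivially from \eqref{oldres-est} since $|\lambda|^{-1/2}\ge 1$. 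The main obstacle I anticipate is adapting the standard Kawashima compensator construction to the nonlocal elliptic dissipation $\tfrac{|\txi|^2}{1+|\txi|^2}LB$, which is of order zero in $\partial_{x_1}$ rather than a genuine parabolic diffusion $B\partial_{x_1}^2$; however, at $|(\txi,\lambda)|\ge R$, the multiplier $\tfrac{|\txi|^2}{1+|\txi|^2}$ is uniformly bounded below once $|\txi|\gtrsim 1$, while if $|\lambda|$ is the dominant component the $\Real\lambda$ term alone controls $u$, so in both regimes the one-dimensional construction of \cite{LMNPZ} adapts with only symbolic modifications.
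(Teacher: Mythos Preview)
Your approach is precisely what the paper's proof prescribes: a Laplace--Fourier transcription of the Friedrichs-plus-Kawashima damping estimate of Section~\ref{sec:damping}, with the one-dimensional computation of \cite{LMNPZ} supplying the $x_1$-part of the dissipation. Your route to \eqref{res-est} via the crude bound $|\lambda|\,\|u\|_{L^2}\le\|\hat f\|_{L^2}+\|\cL_\txi u\|_{L^2}$ together with \eqref{oldres-est} is clean and in fact yields the stronger decay $|\lambda|^{-1}$.

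One correction to your closing paragraph: the assertion that ``if $|\lambda|$ is the dominant component the $\Real\lambda$ term alone controls $u$'' is not right, since the hypothesis is only $\Real\lambda\ge-\theta$, so $|\Im\lambda|$ may be large while $\Real\lambda$ sits near zero. Fortunately this is a non-issue. The $q^1$-coupling already furnishes dissipation in the $x_1$-direction (exactly as in the one-dimensional argument of \cite{LMNPZ}), so the effective damping symbol is $\frac{|\xi|^2}{1+|\xi|^2}A_0LB$ with $|\xi|^2=\xi_1^2+|\txi|^2$, not merely $\frac{|\txi|^2}{1+|\txi|^2}A_0LB$; the Kawashima compensator $K(\partial_{x_1},i\txi)$ is then \emph{self-adjoint} on $L^2(x_1)$ (each $K_j$ is skew-symmetric, and $\partial_{x_1}$ is skew-adjoint), so pairing the resolvent equation against $K u$ produces only a harmless $\Real\lambda\,\langle Ku,u\rangle$ term rather than anything involving $\Im\lambda$. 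The resulting coercivity is uniform over $|(\txi,\lambda)|\ge R$, $\Real\lambda\ge-\theta$, without any separate appeal to the size of $\Real\lambda$.
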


\begin{proof} The proof is straightforward by deriving an energy estimate as a Laplace-Fourier transformed version with respect to variables $(\lambda,\tx)$ of the nonlinear damping energy estimate, presented in the next section (see, for example, an analog proof carried out in \cite{LMNPZ}, Section 6, to treat the one-dimensional problem).
\end{proof}

We also have the following:
\begin{proposition}[Mid-frequency bounds]\label{prop-resMF} Strong spectral stability (D) yields \begin{equation}
|(\cL_\txi - \lambda)^{-1}|_{\hat H^1(x_1)} \le C , \quad \mbox{for
}R^{-1}\le |(\txi,\lambda)|\le R \mbox{ and }\R\lambda \ge
-\theta,\end{equation} for any $R$ and $C=C(R)$ sufficiently large
and $\theta = \theta(R)>0$ sufficiently small, where $|\hat f|_{\hat
H^1(x_1)}$ is defined as in Proposition \ref{prop-resHF}.
\end{proposition}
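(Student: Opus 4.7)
The plan is to use the Green kernel representation of the resolvent together with a compactness argument: on the bounded mid-frequency region, the only possible source of blow-up of $(\cL_\txi-\lambda)^{-1}$ is a zero of the Evans functions $D_\pm(\lambda,\txi)$, and assumption (D) excludes this. Once $D_\pm$ is bounded below on a compact set on $\{\real\lambda\ge 0\}$, a simple continuity argument extends the bound to $\real\lambda\ge -\theta$.

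First I would construct $\cG_{\lambda,\txi}(x_1,y_1)$ as in Section~3, via the representation \eqref{eq:formcolu}--\eqref{C-form} in terms of bases of decaying modes at $\pm\infty$ together with the small-$x_1$ solutions of Proposition~\ref{prop:smallep}. On the compact regime $R^{-1}\le|(\lambda,\txi)|\le R$, $\real\lambda\ge 0$, the asymptotic eigenvalues $\mu_j^\pm(\lambda,\txi)$ of $\A_\pm(\lambda,\txi)$ in \eqref{a-pm} have real parts uniformly bounded away from zero (no frequency degeneration, unlike the low-frequency case), so the modes $\phi_j^\pm$, $\psi_j^\pm$, $W_{k_p}^\pm$ depend continuously on $(\lambda,\txi)$ with uniform exponential decay/growth rates. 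The Green kernel then appears as a finite sum of continuous terms, each carrying a factor $D_\pm(\lambda,\txi)^{-1}$.

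Next I would invoke assumption (D). On $K_0:=\{R^{-1}\le|(\lambda,\txi)|\le R,\ \real\lambda\ge 0\}$, $D_\pm$ is continuous and, by (D), vanishes only at $\rho=0$, which is excluded by $|(\lambda,\txi)|\ge R^{-1}$. Compactness of $K_0$ gives a uniform lower bound $|D_\pm|\ge c_0(R)>0$, and continuity of $D_\pm$ in $(\lambda,\txi)$ extends this lower bound (with possibly smaller $c_0$) to the enlarged compact set $K_\theta:=\{R^{-1}\le|(\lambda,\txi)|\le R,\ \real\lambda\ge-\theta\}$ for some $\theta=\theta(R)>0$ sufficiently small. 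Inserting this into the Green kernel representation yields uniform pointwise bounds $|\partial_{y_1}^\beta\cG_{\lambda,\txi}(x_1,y_1)|\le C e^{-\eta|x_1-y_1|}$ for $\beta=0,1$ away from the singular point, together with the locally integrable contribution near $y_1=0$ of Proposition~\ref{prop-nearzero}, which is harmless for $L^2\to L^2$ mapping properties.

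The bound in $\hat H^1$ then follows by combining the resulting $L^2$ estimate on $u=(\cL_\txi-\lambda)^{-1}\hat f$ with the algebraic information in the resolvent equation \eqref{LF-system}: where $A_1$ is nondegenerate, $\partial_{x_1}u$ is controlled by $u$, $q^1_{x_1}$ and $\hat f$, while the elliptic equation recovers $q^1\in H^2$ from $u\in L^2$ with bounded $|\txi|$-dependent constants, and the $|\txi|u$ part of the $\hat H^1$ norm is automatic since $|\txi|\le R$. The main obstacle is the degeneracy of $A_1$ at $x_1=0$, which prevents a naive inversion to isolate $\partial_{x_1}u$ there; near that point one reverts to the explicit bases $W_{k_p}^\pm$ and $z_j^\pm$ of Proposition~\ref{prop:smallep}, whose $C^1$ regularity is built in, so the Green kernel and its $y_1$-derivative remain locally integrable across $x_1=0$. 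Taken together, these ingredients deliver the stated $\hat H^1$ resolvent bound uniformly on $K_\theta$, with constants depending only on $R$.
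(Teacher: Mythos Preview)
Your proposal is correct and rests on the same core idea as the paper's proof: compactness of the mid-frequency set together with the absence of spectrum there (guaranteed by (D)) forces a uniform resolvent bound, which then persists for $\real\lambda\ge-\theta$ by continuity. The paper's own proof is a single sentence invoking exactly this --- compactness of the frequency set plus analyticity of $(\lambda-\cL_\txi)^{-1}$ in $\hat H^1$ --- without writing out any of the Green kernel machinery.

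Your route is thus more explicit than the paper's: you reconstruct the resolvent through the kernel representation \eqref{eq:formcolu}--\eqref{C-form}, bound $D_\pm$ from below on the compact set, and then recover $\hat H^1$ control from the equation. This is entirely in the spirit of the surrounding sections and arguably more self-contained, whereas the paper simply appeals to abstract analyticity. One small point worth flagging: you invoke Proposition~\ref{prop:smallep} for the near-singular-point solutions, but that proposition is stated only in the small-frequency regime $\lambda\sim 0$. The construction there actually depends only on $\real\alpha>0$, which continues to hold for $\real\lambda\ge-\theta$ with $\theta$ small (by the positive diffusion condition (S2)), so the extension is legitimate --- but you should say so rather than cite the proposition as a black box.
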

\begin{proof} This is due to compactness of the set of frequencies under
consideration together with the fact that the resolvent
$(\lambda-\cL_{\tilde{\xi}})^{-1}$ is analytic with respect to $H^{1}$
in $(\tilde{\xi}, \lambda)$.\end{proof}

\begin{proof}[Proof of bounds for $\cS_2(t)$] The proof starts with the following resolvent
identity, using analyticity on the resolvent set
$\rho(\cL_\txi)$ of the resolvent $(\lambda-\cL_\txi)^{-1}$, for all
$f\in \mathcal{D}(\cL_\txi)$,
\begin{equation}\label{res-id}
(\lambda-\cL_\txi)^{-1}f=\lambda^{-1}(\lambda-\cL_\txi)^{-1}\cL_\txi
f+\lambda^{-1}f.
\end{equation}

Using this identity and \eqref{formS2}, we estimate
\begin{equation}\label{S-est}\begin{aligned}\cS_2(t)f &=\frac{1}{(2\pi i)^{d}}
\int_{-\theta_1-i\infty}^{-\theta_1+i\infty}
\int_{\RR^{d-1}}\chi_{|\tilde{\xi}|^{2}+|\I\lambda|^{2}\geq\theta_1}\\
&\qquad\qquad\times e^{i\txi\cdot\tx +\lambda
t}\lambda^{-1}(\lambda-\cL_\txi)^{-1}\cL_\txi\hat f(x_1,\txi) d\txi
d\lambda\\&\quad+\frac{1}{(2\pi i)^{d}}
\int_{-\theta_1-i\infty}^{-\theta_1+i\infty}
\int_{\RR^{d-1}}\chi_{|\tilde{\xi}|^{2}+|\I\lambda|^{2}\geq\theta_1}\\
&\qquad\qquad\times e^{i\tilde{\xi}\cdot \tilde{x} +\lambda
t}\lambda^{-1}\hat f(x_1,\tilde{\xi}) d\tilde{\xi} d\lambda\\&=:S_1
+ S_2,
\end{aligned}\end{equation}
where, by Plancherel's identity and Propositions \ref{prop-resHF} and \ref{prop-resMF}, we have
$$\begin{aligned}
|S_1|_{L^2(\tx,x_1)}&\le C
\int_{-\theta_1-i\infty}^{-\theta_1+i\infty}
|\lambda|^{-1}|e^{\lambda
t}||(\lambda-\cL_\txi)^{-1}\cL_\txi\hat f|_{L^2(\txi,x_1)}|d\lambda|
\\&\le C e^{-\theta_1 t}
\int_{-\theta_1-i\infty}^{-\theta_1+i\infty}
|\lambda|^{-3/2}\Big|(1+|\txi|)|\cL_\txi\hat f|_{H^1(x_1)}\Big|_{L^2(\txi)}|d\lambda|
\\&\le C
e^{-\theta_1t}|f|_{H^{3}_x}
\end{aligned}$$
and\begin{equation}\begin{aligned}
|S_2|_{L^2_x}&\leq\frac{1}{(2\pi )^{d}}\Big|\text{P.V.}\int_{-\theta_1-i\infty}^{-\theta_1+i\infty}\lambda^{-1}e^{\lambda t} d\lambda \int_{\RR^{d-1}}e^{i\tilde{x}\cdot\tilde{\xi}}\hat f(x_1,\tilde{\xi}) d\tilde{\xi}\Big|_{L^2_x}\\
&\qquad+ \frac{1}{(2\pi )^{d}}\Big|\text{P.V.}\int_{-\theta_1-i r}^{-\theta_1+i r}\lambda^{-1}e^{\lambda t} d\lambda \int_{\RR^{d-1}}e^{i\tilde{x}\cdot\tilde{\xi}}\hat f(x_1,\tilde{\xi}) d\tilde{\xi}\Big|_{L^2_x}\\
&\leq Ce^{-\theta_1 t} |f|_{L^2_x},
\end{aligned}\end{equation}by direct computations, noting that the integral in $\lambda$ in the first term is identically zero.
This completes the proof of the first inequality stated in the proposition.
Derivative bounds follow similarly. \end{proof}

\section{Nonlinear damping estimate}\label{sec:damping}

In this section, we establish an auxiliary damping energy estimate.
We consider the nonlinear perturbation equations for variables $(u,q)$
\begin{equation}\label{eqpert}
	\begin{aligned}
		u_{t}+ \sum_jA_j(x)u_{x_j} +L\Div q &=-\sum_jM_j(x) U_{x_1}, \\
		-\nabla \Div q + q +\nabla (B(x)u) &= 0,
	\end{aligned}
\end{equation}
where we have denoted
\begin{equation*}
	\begin{aligned}
		&A_j(x,t) := df_j (U+u),\quad  M_j(x,t) = df_j (U+u)-df_j(U),
	\end{aligned}
\end{equation*}
and $$ \quad B(x,t) := \int_0^1 dg(U(x_1)+s u(x,t)) \; ds.$$
Here, the functions $A_j(x,t)$ and $B(x,t)$ should not be confused with $A_j(x_1)$ and $B(x_1)$ that used in the previous sections. The former notation is only used in this Section.

\begin{proposition}\label{prop-damping} Under the assumptions of Theorem \ref{theo-nonstab},
so long as $\|u\|_{W^{2,\infty}}$ remains smaller than a small constant $\zeta$ and the amplitude $|U_{x_1}|$ is sufficiently small,
there holds \begin{equation}\label{damp-est}
	|u|_{H^k}^2(t)\le e^{-\eta t}|u|_{H^{k}}^2(0)+ C\int_0^t
		e^{-\eta(t-s)}|u|_{L^2}^2(s)\,ds,
		\qquad\eta>0,
\end{equation}
for $k=1,...,s$, with $s$ large as in Theorem \ref{theo-nonstab}.
\end{proposition}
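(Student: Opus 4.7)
The plan is to adapt the Kawashima-type nonlinear damping estimate (as carried out for the one-dimensional case in \cite{LMNPZ}, Section 6) to the multi-dimensional setting. I would start by applying $\partial_x^\alpha$ to the $u$-equation in \eqref{eqpert} for each $1 \le |\alpha| \le k$, take the $L^2$ inner product with $A_0(x,t)\,\partial_x^\alpha u$, integrate over $\RR^d$, and sum. By the symmetrizer hypothesis in (A1), $A_0 A_j$ is symmetric, so the principal part of the flux contribution, after integration by parts, reduces to commutators of the form $[\partial^\alpha, A_j]u_{x_j}$ together with $(\partial_t A_0)$- and $(\partial_x A_0)$-weighted quadratic terms. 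By standard Moser-type estimates these are bounded by $C\|u\|_{W^{2,\infty}}\|u\|_{H^k}^2 + C\|u\|_{L^2}^2$, which is absorbable provided $\zeta$ is small.

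Next, I would use the elliptic equation to eliminate $q$: taking $\Div$ of $-\nabla\Div q + q + \nabla(Bu) = 0$ yields $(1-\Delta)\Div q = -\Delta(Bu)$, hence $L\Div q = -L(1-\Delta)^{-1}\Delta(Bu)$. Inserting this into the energy identity for $\partial^\alpha u$ produces, by Plancherel, a coupling contribution
\begin{equation*}
\int_{\RR^d} \frac{|\xi|^2}{1+|\xi|^2}\,\widehat{\partial^\alpha u}(\xi)^{*}\, A_0\, L\, dg(U)\,\widehat{\partial^\alpha u}(\xi)\, d\xi \;+\; \mbox{l.o.t.},
\end{equation*}
which is nonnegative thanks to positive semidefiniteness of $A_0 L\, dg$ in (A1) and provides dissipation only in directions where $Ldg$ is coercive. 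To upgrade this partial dissipation into genuine control of $\sum_{1\le|\alpha|\le k}\|\partial^\alpha u\|_{L^2}^2$, I would add to the energy a small multiple of $\langle K(D)\partial^\alpha u,\partial^\alpha u\rangle$ where $K(\xi)$ is a skew-adjoint Fourier multiplier chosen so that $\tfrac12\bigl(K(\xi)\sum_j\xi_j A_j(u_\pm) + (\cdot)^{*}\bigr) + \frac{|\xi|^2}{1+|\xi|^2} A_0 L\, dg(u_\pm)$ is uniformly bounded below by $c\,\frac{|\xi|^2}{1+|\xi|^2}\,\Id$; existence of such a Kawashima compensator is guaranteed exactly by the (KS) condition (A2).

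Combining these ingredients with smallness of $\|u\|_{W^{2,\infty}}$ and $|U_{x_1}|$, and estimating the forcing $\sum_j M_j U_{x_1}$ by $C|U_{x_1}|_{L^\infty}\|u\|_{H^k}^2$, I obtain a differential inequality $\tfrac{d}{dt}\cE(t) \le -\eta\,\cE(t) + C|u|_{L^2}^2$ with $\cE(t)\sim |u|_{H^k}^2$, and Gronwall yields \eqref{damp-est}. The main technical obstacle is that the compensator $K(D)$ is a constant-coefficient pseudodifferential object while $A_j$ and $B$ depend on $x_1$ through $U(x_1)$ and on $(x,t)$ through $u$; the resulting error terms between the constant-coefficient symbolic estimate and the true variable-coefficient operator must be carefully absorbed into either the dissipation gap or the $|u|_{L^2}^2$ remainder, which is where smallness of the shock amplitude and of $\|u\|_{W^{2,\infty}}$ becomes essential. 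The singular point $x_1=0$ of the profile plays no role here, since this is an $x$-space energy estimate and not a resolvent argument.
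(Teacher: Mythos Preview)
Your outline is correct and matches the paper's own argument essentially step for step: symmetrize with $A_0$, derive Friedrichs-type estimates on $\partial_x^\alpha u$, extract partial dissipation from the elliptic coupling (the paper does this in physical space via the identity $\Div q = Bu - (1-\Delta)^{-1}(Bu)$ and the bound $|q|_{H^k}\le C|u|_{H^{k-1}}$, which is exactly your $\tfrac{|\xi|^2}{1+|\xi|^2}$ multiplier rewritten), then add a Kawashima compensator $K(\partial_x)$ guaranteed by (A2) to obtain full $H^k$ dissipation, absorb variable-coefficient errors by smallness of $|U_{x_1}|$ and $\|u\|_{W^{2,\infty}}$, and conclude by Gronwall. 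The paper states the Shizuta--Kawashima positivity in the homogeneous form $\Re\big(A_0LB\,|\xi|^2 - K(\xi)\sum_j\xi_j A_j\big)_\pm \ge \theta|\xi|^2$ rather than your $\tfrac{|\xi|^2}{1+|\xi|^2}$-weighted version, but since $K$ is homogeneous of degree one these are equivalent and the resulting energy functional and differential inequality are the same.
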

%\medskip
\begin{proof} We symmetrize the hyperbolic system in \eqref{eqpert} as
\begin{equation}\label{eqpert-u}
	\begin{aligned}
		A_0u_{t}+ \sum_j\tilde A_j(x)u_{x_j} +A_0L\Div q &=-\sum_j\tilde M_j(x) U_{x_1}
	\end{aligned}
\end{equation}where $A_0$ is the symmetrizer matrix and $\tilde A_j = A_0 A_j$, $\tilde M_j = A_0M_j$. We then observe that \begin{equation}\label{coeff-est}|A_{0x}|,|A_{0t}|,|\tilde A_{jx}|,|\tilde A_{jt}|,|\tilde M_{jx}|,|\tilde M_{jt}|,|B_x|,|B_t| = \cO(|U_{x_1}|+\zeta).\end{equation}
Taking the inner product of $q$ against the second equation in
\eqref{eqpert} and applying the integration by parts, we easily obtain
\begin{equation*}
	|\nabla q|_{L^2}^2+|q|_{L^2}^2
	=\iprod{Bu,\nabla q}\le \half |\nabla q|^2_{L^2} +C|u|_{L^2}^2.
\end{equation*}
Likewise, we can also get for $k\geq 1$
\begin{equation}\label{estq}
	|q|_{H^k}\le C|u|_{H^{k-1}},
\end{equation} for some universal constant $C$.

Taking the inner product of $u$ against the system \eqref{eqpert-u} and integrating by parts, we get
\begin{equation*}
	\frac 12\dt \iprod{A_0u,u}=-\half\iprod{\tilde A^j_{x_j}u,u}-\iprod{U_{x_1}\tilde M_j,u}-\iprod{A_0L\Div q,u}
\end{equation*}
which together with \eqref{estq} and the H\"older inequality gives
\begin{equation}\label{u0est}
	\dt |u|_{L^2}^2 \le C |u|_{L^2}^2.
\end{equation}
Now, to obtain the estimates \eqref{damp-est} in the case of $k=1$, we compute
\begin{equation}\label{1-est}\begin{aligned}
\frac 12 \dt\iprod{A_0u_{x_k},u_{x_k}} &=\iprod{(A_0u_t)_{x_k},
u_{x_k}} + \frac12 \iprod{A_{0t}u_{x_k},u_{x_k}} - \iprod{A_{0{x_k}}u_t,u_{x_k}}\\&=-\iprod{(A_0A_ju_{x_j} + A_0L\Div q)_{x_k},
u_{x_k}} +  \iprod{\cO(|U_{x_1}|+\zeta)u_{x_k},u_{x_k}}
%\\&=-\iprod{A_0L (\Div q)_{x_k},u_{x_k}} - \iprod{A_0A_ju_{x_jx_k},u_{x_k}},
% + \iprod{\cO(|U_x|+\zeta)u_x,u_x}  + \|q\|_{H^1}^2\\&=-\iprod{A_0Lq_{xx},u_x} + \iprod{\cO(|U_x|+\zeta)u_x,u_x}  + %\cO(1)\|u\|_{L^2}^2\\&=-\iprod{A_0LBu_x,u_x} + \iprod{\cO(|U_x|+\zeta)u_x,u_x}  + \cO(1)\|u\|_{L^2}^2,
\end{aligned}\end{equation}
where, noting that $A_0A_j$ is symmetric,
we have $$-\iprod{A_0A_ju_{x_jx_k},u_{x_k}} = \frac 12\iprod{(A_0A)_{x_j}u_{x_k},u_{x_k}}  =  \iprod{\cO(|U_{x_1}|+\zeta)u_{x_k},u_{x_k}},$$ and $$\begin{aligned}-\iprod{(A_0L\Div q)_{x_k},
u_{x_k}} &= -\iprod{A_0L(\Div q)_{x_k},
u_{x_k}} -\iprod{(A_0L)_{x_k} \Div q,
u_{x_k}}\\ &= -\iprod{A_0LB u_{x_k},
u_{x_k}} + \iprod{\cO(|U_{x_1}|+\zeta)u_{x_k},u_{x_k}} + \|q\|_{H^1}^2\\ &= -\iprod{(A_0LB)_\pm u_{x_k},
u_{x_k}} + \iprod{\cO(|U_{x_1}|+\zeta)u_{x_k},u_{x_k}} + \cO(1)\|u\|_{L^2}^2.
\end{aligned}$$
Thus, we obtain the following first-order ``Friedrichs-type'' estimate
\begin{equation}\label{1-est01}\begin{aligned}
\frac 12 \dt\iprod{A_0u_{x_k},u_{x_k}} &= -\iprod{(A_0LB)_\pm u_{x_k},
u_{x_k}}+ \iprod{\cO(|U_{x_1}|+\zeta)u_{x_k},u_{x_k}} + \cO(1)\|u\|_{L^2}^2.
\end{aligned}\end{equation}
We quickly observe that since $LB$ is not (strongly) positive definite, the first term on the right-hand side of \eqref{1-est01} does not provide a full control on the $H^1$ norm of $u$. We shall then need to apply a so--called Kawashima-type estimate. Let us first recall the following well-known result of Shizuta and Kawashima, asserting that hyperbolic effects can compensate for degenerate diffusion $LB$, as revealed by the existence of a
compensating matrix $K$.

\begin{lemma}[Shizuta--Kawashima; \cite{KSh}]\label{lem-K} Assuming (A1), condition (A2) is equivalent to the following:

\medskip

{\bf (K1)} There exist smooth skew-symmetric ``compensating matrices''
$K(\xi)$, homogeneous degree one in $\xi$, such that
\begin{equation}\label{K1}\real\Big(A_0LB|\xi|^2-K(\xi)\sum_j\xi_jA_j \Big)_\pm\ge
\theta|\xi|^2>0\end{equation}for all $\xi \in
\RR^d\setminus\{0\}$.
\end{lemma}

We now use this lemma to give sufficient $H^1$ (or rather, $H^k$) bounds. Let $K(\xi)$ be the
skew-symmetry from the Lemma \ref{lem-K}. We then compute
\begin{equation*}\begin{aligned}
\frac12\dt\iprod{K(\partial_{x_k})u,u}
&=\iprod{Ku_t,u} +\frac12\iprod{K_tu,u} -\frac12\iprod{K_{x_k}u,u_t}
 \\&=-\iprod{KA_ju_{x_j}+KL\Div q,u} +\iprod{\cO(|U_{x_1}|+\zeta)u_{x_k},u_{x_k}} + \cO(1)\|u\|_{L^2}^2
\\&=-\iprod{KA_ju_{x_j},u}+ \iprod{\cO(|U_{x_1}|+\zeta)u_{x_k},u_{x_k}} + \cO(1)\|u\|_{L^2}^2.
\\&=-\iprod{(KA_j)_\pm u_{x_j},u}+ \iprod{\cO(|U_{x_1}|+\zeta)u_{x_k},u_{x_k}} + \cO(1)\|u\|_{L^2}^2.
\end{aligned}\end{equation*}
Using Plancherel's identity, we then obtain
\begin{equation}\label{K-est01}\begin{aligned}
\frac12\dt\iprod{K(\partial_{x})u,u}
&=\iprod{(\sum_jK(\xi)\xi_jA_j)_\pm \hat u,\hat u}+ \iprod{\cO(|U_{x_1}|+\zeta)u_{x},u_{x}} + \cO(1)\|u\|_{L^2}^2,
\end{aligned}\end{equation} where $\hat u$ is the Fourier transform of $u$ in $x$; here, $\partial_x$ stands for $\partial_{x_k}$ for {\em some $x_k$.}

Let us now combine the above estimate with the Friedrichs-type estimate. By adding up \eqref{1-est01} and \eqref{K-est01} together, we obtain
\begin{equation*}\begin{aligned}
\frac12\dt\Big(&\iprod{K(\partial_{x})u,u}+\iprod{A_0u_{x},u_{x}}\Big) \\&= -\iprod{(A_0LB|\xi|^2-K(\xi)\sum_j\xi_jA_j)_\pm \hat u,\hat u}+ \iprod{\cO(|U_{x_1}|+\zeta)u_{x},u_{x}} + \cO(1)\|u\|_{L^2}^2,
\end{aligned}\end{equation*}
which, together with \eqref{K1} and the fact that $\cO(|U_x|+\zeta)$ is sufficiently small, yields
\begin{equation}\label{keyineq1st}\begin{aligned}
\frac12\dt\Big(\iprod{K(\partial_x)u,u}+\iprod{A_0u_x,u_x}\Big)\le  -\frac12\theta\iprod{u_x,u_x}+ \cO(1)\|u\|_{L^2}^2.
\end{aligned}\end{equation}
Very similarly, we also obtain the following estimate for higher derivatives $\partial_x^\alpha$, $|\alpha|=k\ge 1$,
\begin{equation}\label{keyineqkth}\begin{aligned}
\frac12\dt\Big(\iprod{K(\partial_x)\partial_x^{\alpha-1}u,\partial_x^{\alpha-1}u}+\iprod{A_0\partial_x^{\alpha}u,\partial_x^{\alpha}u}\Big)\le  -\frac12\theta\iprod{\partial_x^{\alpha} u,\partial_x^{\alpha}u} + \cO(1)\|u\|_{H^{k-1}}^2.
\end{aligned}\end{equation}

To conclude the desired $H^k$ estimates from the above Kawashima and Friedrichs-type estimates, we define $$\cE(t):= \sum_{k=0}^s\sum_{|\alpha|=k}\delta^k\Big(\iprod{K(\partial_x)\partial_x^{\alpha-1}u,\partial_x^{\alpha-1} u}+\iprod{A_0\partial_x^{\alpha} u,\partial_x^{\alpha} u}\Big),$$
for $\delta>0$. 
By applying the standard Cauchy's inequality on $\iprod{K(\partial_x)\partial_x^{\alpha-1}u,\partial_x^{\alpha-1}u}$ and using the positive definiteness of $A_0$, we observe that $\cE(t) \sim \|u\|_{H^k}^2$. We then use the above estimates \eqref{keyineq1st} and \eqref{keyineqkth}, and take $\delta$ sufficiently small to derive
\begin{equation}\label{key-est} \dt\cE(t) \le -\theta_3 \cE(t) + C
\|u\|_{L^2}^2(t)\end{equation}for some $\theta_3>0$, from which
\eqref{damp-est} follows by the standard Gronwall's inequality. The proof of Proposition \ref{prop-damping} is then complete. 
\end{proof}

\section{Nonlinear analysis}\label{sec:nonlinear}

Defining the perturbation variable $u:= \tilde u - U$, we obtain
the nonlinear perturbation equations
\begin{equation}\label{per-eqs} u_t - \cL u = \sum_j
N^j(u,u_x)_{x_j},\end{equation} where $N^j(u,u_x)=\cO(|u||u_x|+|u|^2)$ so long as $|u|$ remains bounded. We then apply the Duhamel formula \eqref{Duhamel-intro} to \eqref{per-eqs}, yielding
\begin{equation}\label{Duhamel}
\begin{aligned}
  u(x,t)=& \cS(t) u_0 + \int_0^t \cS(t-s)\sum_j
\partial_{x_j}N^j(u,u_x)ds
\end{aligned}
\end{equation} where $u(x,0) = u_0(x)$, recalling that $\cS(t) = e^{\cL t}$ denotes the linearized solution operator.

\begin{proof}[Proof of Theorem \ref{theo-nonstab}]
Define \begin{equation}\label{zeta} \begin{aligned}\zeta(t):=\sup_{0\le s\le t}
&\Big(|u (s)|_{L^2_x}(1+s)^{\frac{d-1}4}+|u (s)|_{L^\infty_x}(1+s)^{\frac
{d-1}2-\epsilon}
\Big)\end{aligned}
\end{equation}
where $\epsilon>0$ is arbitrary small in case of $d=2$ and $\epsilon=0$ in case of $d\ge 3$. 

We first show that $\zeta(t)$ is well-defined at least locally in time. Indeed, the symmetrizability assumption (A1) easily yields the following a priori $H^s$ ``Friedrichs-type'' estimate (see also \eqref{1-est01} for an $L^2$ version):
$$ \frac{d}{dt} \|u(t)\|_{H^s}^2 \quad \le \quad C \|u(t)\|_{H^s}^2 \Big( 1 + \|u(t)\|_{H^2}\Big), $$
 for some positive constant $C$ and $s> 1+d/2$. It is then easy to see that the standard short-time theory and local well-posedness in $H^s$ can be applied for the perturbation equations \eqref{per-eqs}, from a standard nonlinear iteration scheme and the above a priori estimate. See, for example, \cite{Z7}, Proposition 1.6, for a detailed proof of the local well-posedness for symmetrizable hyperbolic and hyperbolic-parabolic systems. Furthermore, the local-wellposedness argument also shows that the solution $u \in
H^s$ indeed exists on the open time-interval for which $|u |_{H^s}$ remains
bounded, and thus on this interval $\zeta(t)$ is well-defined and
continuous. 

%Now, let $[0,T)$ be the maximal interval on which
%$|u |_{H^s}$ remains strictly bounded by some fixed, sufficiently
%small constant $\delta>0$. 

We shall prove next that, for all $t\ge
0$ for which the solution exists with $\zeta(t)$ uniformly bounded by
some fixed and sufficiently small constant, there holds
\begin{equation}\label{zeta-est}
\zeta(t) \le C(|u _0|_{L^1\cap H^s}+\zeta(t)^2) .\end{equation}
This bound together with continuity of $\zeta(t)$ implies that
\begin{equation}\label{zeta-est1} \zeta(t) < 2C|u _0|_{L^1\cap H^s}\end{equation}
for $t\ge0$, provided that $|u _0|_{L^1\cap H^s}< 1/4C^2$, by the standard continuous induction argument. Indeed, assume that \eqref{zeta-est1} fails. By continuity, we can take the first $T>0$ such that $\zeta(T) = 2C|u _0|_{L^1\cap H^s}$. The estimate \eqref{zeta-est} then yields
$$\begin{aligned}
 2C|u _0|_{L^1\cap H^s} = \zeta(T) &\le C\Big(|u _0|_{L^1\cap H^s}+4C^2|u _0|_{L^1\cap H^s}^2\Big) \\
&= C|u _0|_{L^1\cap H^s} \Big(1+4C^2|u _0|_{L^1\cap H^s}\Big).\end{aligned}$$
A contradiction then occurs if the initial perturbation is small, namely $|u _0|_{L^1\cap H^s} < 1/4C^2$.  

In addition, we observe that the claim also provides sufficient bounds on $H^s$ norm of the solution. To see this, we apply the Proposition \ref{prop-damping} and the Sobolev embeding inequality $|u |_{W^{2,\infty}}\le C|u |_{H^s}$.
We then have
\begin{equation}\label{Hs}\begin{aligned}|u (t)|_{H^s}^2 &\quad\le\quad Ce^{-\theta t}|u _0|_{H^s}^2
+ C \int_0^t
e^{-\theta(t-\tau)}|u (\tau)|_{L^2}^2%+|\CalB_h(\tau)|^2\Big)
d\tau\\&\quad\le\quad
C(|u _0|_{H^s}^2 +\zeta(t)^2)(1+t)^{-(d-1)/2}.
\end{aligned}\end{equation}
With such a uniform bound on $H^s$ norm, the solution can then be extended to a larger time interval. Repetition of these arguments yields the global existence of the solution, provided that the claim \eqref{zeta-est} is proved uniformly in time.   
This and the estimate \eqref{zeta-est1} would then complete the proof of the main theorem.

Thus, it remains to prove the claim \eqref{zeta-est}. First by \eqref{Duhamel}, we obtain
\begin{equation}\begin{aligned} |u (t)|_{L^2}\quad\le\quad& |\cS(t)u _0|_{L^2} +
\int_0^t|\cS_1(t-s)\partial_{x_j}N^j(s)|_{L^2}ds
%\\&
+ \int_0^t
|\cS_2(t-s)\partial_{x_j}N^j(s)|_{L^2}ds %+|\Gamma  u (0,\tx,t)|_{L^2}
\\\quad=\quad& I_1 + I_2+I_3%+|\Gamma  u (0,\tx,t)|_{L^2}
\end{aligned}\end{equation}
where by using the estimates in Proposition \ref{prop-estS} we estimate
$$\begin{aligned}I_1\quad :&=\quad|\cS(t) u _0|_{L^2}\quad\le\quad C (1+t)^{-\frac{d-1}{4}}|u _0|_{L^1\cap
H^3},\\
I_2\quad:&=\quad\int_0^t|\cS_1(t-s)\partial_{x_j}N^j(s)|_{L^2}ds
\\\quad&\le\quad C\int_0^t (1+t-s)^{-\frac{d-1}{4}-\frac12}(|N^j(s)|_{L^1}+|\partial_{x_1}N^j(s)|_{L^{1}(\tx;H^1(x_1))})ds
\\\quad&\le\quad C\int_0^t (1+t-s)^{-\frac{d-1}{4}-\frac12}|u |_{H^s}^2
ds\\\quad&\le\quad C(|u _0|_{H^s}^2+\zeta(t)^2)\int_0^t
(1+t-s)^{-\frac{d-1}{4}-\frac12}(1+s)^{-\frac{d-1}{2}}
ds\\\quad&\le\quad
C(1+t)^{-\frac{d-1}{4}}(|u _0|_{H^s}^2+\zeta(t)^2)
\end{aligned}$$
and
$$\begin{aligned} I_3\quad:&=\quad\int_0^t
|\cS_2(t-s)\partial_{x_j}N^j(s)|_{L^2}ds\quad \le\quad  \int_0^t
e^{-\theta(t-s)}|\partial_{x_j}N^j(s)|_{H^3}ds
\\\quad&\le\quad C\int_0^t
e^{-\theta(t-s)}(|u |_{L^\infty} + |u _x|_{L^\infty})|u |_{H^5}ds
\quad \le\quad C\int_0^t
e^{-\theta(t-s)}|u |_{H^s}^2ds
\\\quad&\le\quad C(|u _0|_{H^s}^2+\zeta(t)^2) \int_0^t
e^{-\theta(t-s)}(1+s)^{-\frac{d-1}{2}}ds\\\quad&\le\quad
C(1+t)^{-\frac{d-1}{2}}(|u _0|_{H^s}^2+\zeta(t)^2).
\end{aligned}$$

Combining the above estimates immediately yields \begin{equation} |u (t)|_{L^2}(1+t)^{\frac{d-1}{4}} \le
C(|u _0|_{L^1\cap H^s}+\zeta(t)^2) .\end{equation}
Similarly, we can obtain estimates for $
|u (t)|_{L^{\infty}_{x}}$, noting that a Moser-type inequality (precisely, Lemma 1.5 in \cite{Z7}) is used to give: $|N(t)|_{L^\infty}\le C|u(t)|_{H^s}^2$. This then completes the proof of the claim
\eqref{zeta-est}, and therefore the main theorem.\end{proof}

\bigskip

{\bf Acknowledgements.} {\em The author thanks Kevin Zumbrun for many useful discussions throughout this work, and to Benjamin Texier and the referee for many helpful comments. He is also greatly thankful to the Foundation Sciences Math\'ematiques de Paris for their support of this work through a 2009-2010 postdoctoral fellowship.
}

\bibliographystyle{elsarticle-num}

\end{document}